\documentclass{amsart}
\usepackage{graphicx} 

\usepackage{amsmath}%
\usepackage{amsthm}
\usepackage{amsfonts}%
\usepackage{amssymb}%
\usepackage{graphicx}
\usepackage{tikz-cd}
\usetikzlibrary{cd}
\usepackage{bm}
\usepackage{comment}
\usepackage{todonotes}

\usepackage{mathrsfs}

\DeclareFontFamily{OT1}{pzc}{}
\DeclareFontShape{OT1}{pzc}{m}{it}{<-> s * [1.10] pzcmi7t}{}
\DeclareMathAlphabet{\mathpzc}{OT1}{pzc}{m}{it}

\newtheorem{theorem}{Theorem}[section]

\newtheorem{corollary}[theorem]{Corollary}

\newtheorem{definition}[theorem]{Definition}

\newtheorem{lemma}[theorem]{Lemma}

\newtheorem{problem}[theorem]{Problem}
\newtheorem{proposition}[theorem]{Proposition}
\newtheorem{remark}[theorem]{Remark}

\numberwithin{equation}{section}

\def\XXint#1#2#3{{\setbox0=\hbox{$#1{#2#3}{\int}$}
\vcenter{\hbox{$#2#3$}}\kern-.5\wd0}}

\newcommand{\R}{\mathbb{R}}
\renewcommand{\P}{\mathbb{P}}
\newcommand{\Q}{\mathbb{Q}}
\newcommand{\N}{\mathbb{N}}

\newcommand{\Ha}{\mathcal{H}}
\newcommand{\leb}{\mathcal{L}}

\newcommand{\spt}{\operatorname{spt}}

\newcommand{\Mod}{\operatorname{Mod}}
\newcommand{\dist}{\operatorname{dist}}
\newcommand{\diam}{\operatorname{diam}}

\newcommand{\lip}{\operatorname{lip}}
\newcommand{\Lip}{\operatorname{Lip}}
\newcommand{\LIP}{\operatorname{LIP}}
\newcommand{\im}{\operatorname{im}}
\newcommand{\gap}{\operatorname{gap}}
\newcommand{\dom}{\operatorname{dom}}
\newcommand{\Fr}{\operatorname{Fr}}

\newcommand{\conv}{\operatorname{conv}}
\newcommand{\sh}{\operatorname{sh}}

\newcommand{\ud}{\mathrm {d}}
\newcommand{\id}{\mathrm {id}}

\newcommand{\inv}{^{-1}}

\newcommand{\Tan}{\operatorname{Tan}}

\usepackage{stmaryrd}

\title[Embeddability and rectifiability of LDS]{Embeddability and rectifiability of Lipschitz differentiability spaces}
\author{Ivan Caamano, Sylvester Eriksson-Bique, Elefterios Soultanis}
\date{\today}

\thanks{IC is partially supported by grant PID2022-138758NB-I00 (Spain).  SEB is supported by the Research
Council of Finland  grant no. 354241. ES was partially supported by Research
Council of Finland grant no. 355122. The work in this manuscript was also partially supported by the Simons Foundation grant (award no. SFI-MPS-T-Institutes-00010825) and from State Treasury funds as part of a task commissioned by the Minister of Science and Higher Education under the project “Organization of the Simons Semesters at the Banach Center - New Energies in 2026-2028” (agreement no. MNiSW/2025/DAP/491).}
\keywords{Rectifiability, differentiability, Banach spaces, Radon-Nikodym property, RNP, metric embeddings, metric differentiability}
\subjclass{30L99, 51F30, 28A75, 46B22, 26B05}

\begin{document}

\maketitle

\begin{abstract}
We prove that Lipschitz differentiability spaces which bi-Lipschitz embed into an RNP-space are countably rectifiable. In contrast to earlier methods of Cheeger and Kleiner, our approach does not rely on differentiating RNP-targets, and uses instead decomposability bundles and a careful blow-up analysis. We also present decomposability bundles in a way which avoids the mention of Alberti representations and generalizes the approach of Alberti--Marchese \cite{almar16} to measures in RNP-spaces. 

We moreover study fragment-wise differentiability into RNP-targets, give a new $\Lip-\lip$-type characterization of RNP-differentiability spaces, and address a question of Le Donne asking for a characterization of spaces $(X,\mu)\subset\ell^2$ whose Gromov--Hausdorff tangents are Hausdorff limits of $r\inv (X-x)$ in $\ell^2$ as $r\to 0$.

\end{abstract}

\section{Introduction}

\subsection{Background}
The seminal work of Cheeger \cite{che99} on the differentiability of Lipschitz functions on PI-spaces gave birth to what are now called \emph{Lipschitz differentiability spaces} (LDS), where the conclusion of Rademacher's theorem is valid with respect to suitable charts. The theory of LDS's was pioneered, along with Cheeger, by numerous researchers including e.g. Bate, Kleiner, Li, Schioppa, Speight \cite{bate12diff,bate2013differentiability,bate17,sch16b,sch16a,che-kle-sch16}. A necessary (but not sufficient) condition for being an LDS is the existence of a decomposition of the measure into curve-fragments, known as Alberti representations after the influential work \cite{almar16}. Alberti representations are closely connected to Weaver derivations and metric 1-currents \cite{sch16,sch16b}, and give rise to a (weaker) notion of fragment-wise differentiability \cite{tsd24}.

It turns out that Lipschitz differentiability and fragment-wise differentiability behave very differently if we consider Lipschitz maps into infinite dimensional Banach spaces: while Lipschitz maps from an LDS to an RNP-Banach space are always fragment-wise differentiable (Theorem \ref{thm:fragmentwise}), an example of Schioppa \cite{schioppadiff,waldbate} shows that this is not true of full differentiability. The question which targets a given LDS differentiates -- and how this is reflected in its geometry -- is currently poorly understood, apart from the results of second author and Bate--Li \cite{bateli,sebgafa} that differentiating all RNP-targets (i.e. being RNP-LDS) is equivalent to PI-rectifiability (being covered by countably many subsets of PI-spaces). An important reason for interest in such questions comes from (non-)embeddings of metric spaces. A conceptually simple non-embedding principle is present already in \cite[Theorem 14.2]{che99} and distilled in \cite[Theorem 1.2 and Corollary 1.3]{terirect}: an LDS which both differentiates, and (bi-Lipschitz) embeds into, a given Banach space must be countably rectifiable; see also \cite[Corollary 3.2]{lytopen} for a closely related statement. Thus, PI-spaces embedding into an RNP-space are countably rectifiable. 

In this paper, we show that rectifiability can -- rather surprisingly -- be deduced directly, without appealing to RNP-differentiability, but using instead the infinitesimal geometry of LDS's and the geometric properties of RNP-spaces. This generalizes several rectifiability results in the literature and demonstrates a novel strategy for analyzing subsets of RNP-spaces, which may be of independent interest. We furthermore develop fragment-wise differentiation into RNP-targets (establishing Theorem \ref{thm:fragmentwise} mentioned above) and decomposability bundles in an infinite dimensional setting, generalizing results of \cite{almar16}. We will next give the precise formulation of the main theorem.

\subsection{Rectifiability}
A \emph{Cheeger chart} $(U,\varphi)$ in a metric measure space $(X,d,\mu)$ consists of a Borel set $U\subset X$ with $\mu(U)>0$, and a Lipschitz map $\varphi:X\to \R^n$ ($n$ is called the dimension of the chart) with the following property: given any Lipschitz function $f:X\to \R$, for $\mu$-a.e. $x\in U$ there exists a unique $\ud_xf\in(\R^n)^*$ so that 
\begin{align}\label{eq:LDS-diff}
	\limsup_{y\to x}\frac{|f(y)-f(x)-\ud_xf(\varphi(y)-\varphi(x))|}{d(y,x)}=0.
\end{align}
The linear map $\ud_xf$ is called the differential of $f$ at $x$ with respect to $(U,\varphi)$. 

\begin{definition}\label{def:LDS}
    A metric measure space $(X,d,\mu)$ is called a Lipschitz differentiability space (LDS) if it can be covered by countably many Cheeger charts (of possibly varying dimension).
\end{definition}

We say that a metric measure space $(X,\mu)$ is countably rectifiable, if there is countable collection of sets $X_i\subset X$ with 
\begin{align*}
	\mu\Big(X\setminus \bigcup_iX_i\Big)=0
\end{align*}
so that, for each $i\in\N$, $\mu|_{X_i}\ll\Ha^{n_i}$ and $X_i$ is bi-Lipschitz equivalent to a subset of $\R^{n_i}$.

A Banach space $Y$ is said to satisfy RNP (the Radon Nikodym Property) if every Lipschitz function $f:[0,1]\to Y$ is differentiable almost everywhere.

\begin{theorem}\label{thm:LDS-in-RNP}
Suppose $(X,\mu)$ is a Lipschitz differentiability space and $\iota:X\to Y$ is a bi-Lipschitz map into a Banach space $Y$ with the RNP. Then $(X,\mu)$ is countably rectifiable.
\end{theorem}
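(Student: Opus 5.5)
We argue chart by chart. Fix an $n$-dimensional Cheeger chart $(U,\varphi)$. The aim is to show that for $\mu$-a.e. $x\in U$ the chart map is infinitesimally bi-Lipschitz: there is $c>0$ with
\begin{align*}
\liminf_{y\to x}\frac{|\varphi(y)-\varphi(x)|}{d(y,x)}\geq c .
\end{align*}
Given this, a standard argument finishes the proof. Split $U$ into countably many pieces on which the constant $c$ and the scale below which the inequality holds are uniform. Cut these further into pieces of small diameter; on each such piece $\varphi$ is bi-Lipschitz onto its image in $\R^n$. Then $\mu$ restricted to each piece is absolutely continuous with respect to $\Ha^n$, since LDS charts push $\mu$ forward to measures $\ll \leb^n$ (the result of De Philippis--Marchese--Rindler, or its analogue via decomposability bundles developed later in the paper). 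This gives countable rectifiability.

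To obtain the lower bound we use the structure of $\iota(X)$ in the RNP space $Y$, without differentiating $\iota$.

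\textbf{Step 1.} Push $\mu$ forward to $\nu=\iota_\#\mu$ on $Y$ and build its decomposability bundle $V(\nu,y)\subset Y$. Fragment-wise differentiability into RNP targets (Theorem \ref{thm:fragmentwise}) lets us differentiate $\iota$ along the Alberti-type curve fragments that make up the bundle. We then show that at $\mu$-a.e. $x\in U$ the bundle $V(x)$ is finite dimensional, and that $\ud\varphi$ restricted to $V(x)$ is an isomorphism onto $\R^n$.

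The dimension bound uses the LDS property. If there were more than $n$ independent fragment directions, we could produce, via a Hahn--Banach/distance-type construction, a Lipschitz function $f$ whose fragment derivatives could not factor through $\ud\varphi$. Conversely, the uniqueness of differentials forces the directions of $V(x)$ to span $\R^n$ under $\ud\varphi$.

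\textbf{Step 2 (blow-up).} At a typical point $x$ we take blow-ups $r^{-1}(\iota(X)-\iota(x))$ in $Y$. We aim to show that, at $\mu$-a.e. point, every sequence $y_k\to x$ satisfies
\begin{align*}
\dist\big((\iota(y_k)-\iota(x))/d(y_k,x),\,V(x)\big)\to 0 .
\end{align*}
In words, $\iota(X)$ is infinitesimally contained in the finite-dimensional plane $V(x)$. Once this holds, $\iota$ near $x$ is approximately a map into $V(x)\cong\R^n$. Since $\iota$ is bi-Lipschitz and $\ud\varphi|_{V(x)}$ is injective, the required lower bound on $|\varphi(y)-\varphi(x)|/d(y,x)$ follows.

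\textbf{Main obstacle: the flatness in Step 2.} In infinite dimensions we cannot take norm-compact tangents. The plan is to argue by contradiction. Suppose that on a positive-measure set there are points $y$ at distance about $r$ from $x$, with $\iota(y)-\iota(x)$ at distance at least $\epsilon r$ from $V(x)$. Take a functional $\xi\in Y^*$ of norm one that vanishes on $V(x)$ and is large on $\iota(y)-\iota(x)$. The function $f=\xi\circ\iota$ has zero fragment derivatives along the bundle, so its Cheeger differential vanishes. Differentiability of $f$ at $x$, made uniform on large sets via Egorov/Lusin, then contradicts the size of $f(y)-f(x)$. The difficulty is that $\xi$ depends on $x$ and on the scale. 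We handle this by choosing $\xi$ from a countable norming family, after restricting to a separable subspace. We must also make the approximation uniform, so that density points absorb the exceptional sets. This uniform selection of functionals is where I expect the most care to be required.
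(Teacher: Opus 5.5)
Your outer frame matches the paper's. The lower bound on $|\varphi(y)-\varphi(x)|/d(y,x)$, the splitting into bi-Lipschitz pieces and the appeal to De Philippis--Marchese--Rindler are Proposition \ref{prop:bilip-decomp}. Step 1 is essentially Theorem \ref{thm:fragmentwise} together with Proposition \ref{prop:linear-part-of-blowup}. Once flatness is known, your deduction of the lower bound also works: differentiate the finite-dimensional map $P_0\circ\iota$, where $P_0$ is a projection onto a plane $V_0$ approximating $V$ on a large compact set.

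\textbf{The gap is Step 2, and the proposed mechanism cannot work.} Take a countable family $\{\xi_i\}$ vanishing on $V_0$ with $\dist(w,V_0)=\sup_i|\xi_i(w)|$. Differentiability of $\xi_i\circ\iota$ gives $|\xi_i(\iota(y)-\iota(x))|\le (C\varepsilon+o_i(1))\,d(y,x)$, with a rate that depends on $i$. Egorov can make each rate uniform in $x$, but never uniform in $i$, and flatness needs the supremum over $i$. No finite subfamily suffices either: its common kernel has finite codimension, so it contains vectors at distance $1$ from $V_0$. Choosing $\xi$ adapted to $x$ and the scale is the same non-uniformity. Moreover, $x$ may lie in the exceptional null set of the $x$-dependent function $\dist(\iota(\cdot),\iota(x)+V_0)$.

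More decisively, Step 2 never uses that $\iota$ is bi-Lipschitz. Run it for an arbitrary $F\in\LIP(X,Y)$, with $V(x)$ replaced by the image of the fragment-wise differential $\ud_xF$ from Theorem \ref{thm:fragmentwise}. It would give $\dist(F(y)-F(x),\ud_xF(\R^n))=o(d(y,x))$. Together with Cheeger differentiability of $P_0\circ F$, this makes $F$ differentiable $\mu$-a.e. Your argument would therefore show that every LDS differentiates every RNP space, which the examples of \cite{schioppadiff,waldbate} refute.

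\textbf{What is missing.} You need a geometric use of the embedding, and a non-differentiability construction that produces a single global function rather than $x$-dependent ones. In the paper this goes as follows.
\begin{enumerate}
\item Since $\mu$ lives on $X$, density of directions puts $T_\mu(y)\cap B(R)$ asymptotically inside $r^{-1}(X-y)$ for every $y$ near $x$ (Lemma \ref{lem:decomp-bundle-subset-of-tangent}).
\item After freezing $T_\mu$ on a compact set, $X$ is nearly invariant under translations by $T_\mu(x)$. Hence rescalings are Hausdorff-close to $r^{-1}Z_{rR}+T_\mu(x)$, and tangents split as $(Z+T_\mu(x),\nu_Z\times\leb^n)$ (Lemma \ref{lem:e-close-to-product}, Theorem \ref{thm:tang=prod}).
\item Suppose $Z$ is nontrivial on a set of positive measure. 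The functions $(\varepsilon r_j-\dist_{x+V_0})_+$ have minimal $\ast$-upper gradient at most $\varepsilon^2$ on $K$, because fragments in $K$ move in directions close to $V_0$. Yet they oscillate by $\gtrsim\varepsilon r_j$ near \emph{every} point of a set $S_j$ of proportional measure. The reason is that the product structure supplies points of $X$ near a parallel translate $y_j+V_0$ at distance $\approx\varepsilon r_j$.
\item Cutting these functions off by $\eta_\varepsilon$ only affects an annulus of mass $\lesssim\varepsilon\mu(S_j)$, again by the product structure of the tangent measure. The results are Schioppa tiles.
\item Schioppa's theorem sums the tiles into one Lipschitz function that is non-differentiable on a set of positive measure, contradicting the LDS property (Proposition \ref{prop:pre-tile}, Corollary \ref{cor:pre-tile}, Theorem \ref{thm:no-extra-factors}).
\end{enumerate}
This gives unique tangents $(T_\mu(x),c\leb^n,0)$, which is exactly your flatness statement.
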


With a stronger assumption of RNP-differentiability this was established in  \cite[Theorem 1.6]{cheegerkleiner}. That result rests on the ability to differentiate the biLipschitz embedding itself to find almost everywhere unique Hausdorff tangents that are planes. In our case, we also show that at almost every point Hausdorff tangents are planes, but do this via a much more delicate general study of the infinitesimal structure coming from a fragment-wise differential structure. We first show that at almost every point the space infinitesimally admits a splitting $Z\times H$ with $H$ a finite dimensional subspace, similar to the results in \cite{davideb20}. We then show that $Z$ must be trivial. The difficulty here is that the existence of non-trivial extra factors $Z$ alone is insufficient to derive a contradiction by constructing a non-differentiable function. Our argument relies -- in addition to having non-trivial extra factors of tangents -- on the inclusion of the space in an RNP-space, which yields that the rescalings are Hausdorff-close to products, cf. Lemma \ref{lem:e-close-to-product}. This subtle point is important in the proof. Finally, we employ the construction of Schioppa \cite{sch16a} of tile functions together with this approximate splititng.

We obtain immediately a novel obstruction to embeddability.
\begin{corollary}
Suppose $(X,\mu)$ is an LDS which fails to differentiate some RNP-Banach space. Then $(X,\mu)$ does not admit a bi-Lipschitz embedding into any RNP-Banach space.
\end{corollary}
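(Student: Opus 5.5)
The corollary follows from Theorem \ref{thm:LDS-in-RNP} by contraposition, combined with the principle that countably rectifiable LDS's differentiate every RNP-target. Suppose, towards a contradiction, that $(X,\mu)$ is an LDS and that $\iota:X\to Y$ is a bi-Lipschitz embedding into some RNP-Banach space $Y$. Theorem \ref{thm:LDS-in-RNP} then says $(X,\mu)$ is countably rectifiable. So there are Borel sets $X_i$ covering $X$ up to a $\mu$-null set, with $\mu|_{X_i}\ll\Ha^{n_i}$ and bi-Lipschitz maps $\psi_i:X_i\to A_i\subset\R^{n_i}$. It remains to show that such a space differentiates every RNP-Banach space $V$, which contradicts the hypothesis.

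Fix an RNP-space $V$ and a Lipschitz map $f:X\to V$. On each piece I would transport the problem to Euclidean space. The map $g_i=f\circ\psi_i^{-1}:A_i\to V$ is Lipschitz. The pushforward $(\psi_i)_*(\mu|_{X_i})$ is absolutely continuous with respect to $\leb^{n_i}$, because bi-Lipschitz maps preserve $\Ha^{n_i}$-null sets. After passing to Lebesgue density points, we may assume $A_i$ is compact.

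Extending $g_i$ to all of $\R^{n_i}$ is the delicate point, since Lipschitz extension into a general Banach space is not available. I would avoid extension. By the RNP version of Rademacher's theorem (Kirchheim / Cheeger--Kleiner), Lipschitz maps $\R^n\to V$ are differentiable a.e. For maps defined only on a set, I would instead use differentiability at density points. Concretely: at a.e. point of $A_i$, take directional derivatives along lines through density points via fragment-wise differentiation (Theorem \ref{thm:fragmentwise}, or directly the RNP definition applied to coordinate-line fragments). Then use the standard density-point argument to upgrade these to full differentiability of $g_i$ relative to $A_i$.

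Next I relate this to the charts. On $X_i$ the map $\psi_i$ is itself a candidate chart. By uniqueness of differentials in an LDS, any Cheeger chart $(U,\varphi)$ satisfies $\varphi=L\circ\psi_i+o(d)$ a.e. on $U\cap X_i$ with $L$ linear and invertible onto its image. This holds because $\varphi\circ\psi_i^{-1}$ is differentiable a.e. by the Euclidean Rademacher theorem, and the dimensions must agree by uniqueness. Combining this with the differentiability of $g_i$, we get $f(y)-f(x)=Dg_i(\psi_i(x))(\psi_i(y)-\psi_i(x))+o(d(x,y))$ for $y\in X_i$.

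The main obstacle is handling points $y\notin X_i$ near $x$, because the differentiability in \eqref{eq:LDS-diff} ranges over all $y\to x$. Here I would use that $\mu$-a.e. $x$ is a density point of $X_i$ and that LDS measures are asymptotically doubling. Every $y$ near $x$ then has a point $y'\in X_i$ with $d(y,y')=o(d(x,y))$, and the Lipschitz bounds on $f$ and $\varphi$ absorb the error. This yields differentiability of $f$ with respect to the charts a.e., so $(X,\mu)$ differentiates $V$, a contradiction.
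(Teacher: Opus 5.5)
Your argument is correct and follows the route the paper has in mind: the corollary is stated as an immediate consequence of Theorem~\ref{thm:LDS-in-RNP} together with the known fact that countably rectifiable spaces differentiate every RNP-target (they are PI-rectifiable, so \cite{bateli,sebgafa} apply), a fact you reprove by hand instead of citing. In that hand-made part, two small corrections are needed: Lipschitz extension from a subset of $\R^{n_i}$ into an arbitrary Banach space \emph{is} available (Whitney-type partition of unity, with constant depending on $n_i$), so your density-point detour is unnecessary; and the estimate $\dist(y,X_i)=o(d(x,y))$ at $\mu$-a.e.\ $x\in X_i$ should be justified by the fact that LDS measures vanish on porous sets, because Lebesgue density together with pointwise doubling at $x$ does not by itself imply it.
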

In particular, the examples of Schioppa \cite{schioppadiff} and Bate--Wald \cite{waldbate} do not embed into any RNP-space.

This corollary has implications to the well-known Lang-Plaut problem \cite[Question 2.4]{langplaut}, which asks whether every doubling subset of $\ell_2$ biLipschitz embeds into $\R^n$ for some $n$ depending on the doubling constant. One plausible proof strategy has been suggested: Find a subset $(X,\mu)\subset \ell_2$ which is an LDS, but which is not rectifiable. Such a space could also not embed into $\R^n$ for any finite $n$ due to \cite[Theorem 14.2]{che99}. Our main result, Theorem \ref{thm:LDS-in-RNP}, shows that no such example exists. Thus, differentiability cannot serve as an obstruction to finite dimensional embeddability for subsets of $\ell_2$.

\subsection{Decomposability bundle and fragment-wise differential}
An important technical tool in our approach is the decomposability bundle of a measure. The notion as well as the terminology originates in \cite{almar16}, where a definition in terms of Alberti representations is given. Below we generalize this notion to infinite dimensional spaces, and give an alternative definition which does not use Alberti representations; see also Section \ref{sec:decomp-bundle}.

\begin{definition}\label{def:decomp-bundle}
Let $\mu$ be a Radon measure in an RNP-Banach space $Y$. The decomposability bundle $x\mapsto T_\mu(x)$ is the $\mu$-a.e. minimal bundle with the following property: there exists a $\mu$-null set $N\subset X$ such that
\[
\gamma_t' \in T_\mu({\gamma_t}) \text{ a.e. } t\in \dom(\gamma)
\]
for all $\gamma\in \Fr(Y)$ with $|\gamma\inv(N)|=0$.
\end{definition}

The decomposability bundle is closely related to the fragment-wise differentiable structure. In particular, we establish the existence of the decomposability bundle whenever $(X,\mu)$ admits a fragment-wise differentiable structure in Proposition \ref{prop:linear-part-of-blowup}. We also show its existence in a wide class of  infinite dimensional situations, cf. Theorem \ref{thm:decomp-bundle}. These results include all uniformly convex Banach spaces.

We now contrast the rigidity of LDS subsets of RNP-Banach spaces with fragment-wise differentiability.
\begin{theorem}\label{thm:fragmentwise}
Let $(U,\varphi)$ be an $n$-dimensional fragment-wise chart in a metric measure space $(X,\mu)$, and let $Y$ be an RNP-Banach space. Then for any $f\in\LIP(X,Y)$ and $\mu$-a.e. $x\in U$, there exists a unique $\ud_xf\in \leb(\R^n,Y)$ such that 
\begin{align*}
(f\circ\gamma)'_t= \ud_{\gamma_t}f((\varphi\circ\gamma)_t')\quad\textrm{a.e. }t\in \gamma^{-1} (U)
\end{align*}
for $\Mod_\infty$-a.e. $\gamma\in\Fr(X)$. 
\end{theorem}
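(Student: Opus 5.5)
The plan is to reduce to the scalar fragment-wise chart property of $(U,\varphi)$, applied to countably many functionals $\xi\circ f$, and to use the RNP only to make sure the curve derivatives $(f\circ\gamma)'_t$ exist as genuine elements of $Y$. First reduce to $Y$ separable: $\mu$ is Radon, so $X$ may be taken $\sigma$-compact up to a null set, and $f(X)$ is then separable. Replace $Y$ by the closed linear span of $f(X)$, which is a closed subspace of an RNP-space and hence RNP. Then fix a countable set $\{\xi_k\}\subset B_{Y^*}$ that is norming for $Y$, and let $D$ be its $\Q$-linear span.

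For each $\xi\in D$ the map $\xi\circ f$ is real-valued Lipschitz. By the definition of a fragment-wise chart, for $\mu$-a.e. $x\in U$ there is a unique $\ud_x(\xi\circ f)\in(\R^n)^*$ such that, for $\Mod_\infty$-a.e. $\gamma$,
\[
(\xi\circ f\circ\gamma)'_t=\ud_{\gamma_t}(\xi\circ f)((\varphi\circ\gamma)'_t)\quad\text{a.e. } t\in\gamma\inv(U).
\]
Since $D$ is countable, countable unions of $\mu$-null sets and of $\Mod_\infty$-null families give a single exceptional set that works for all $\xi\in D$ simultaneously. Uniqueness gives $\Q$-linearity of $\xi\mapsto \ud_x(\xi\circ f)$. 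I would also use the standard bound $|\ud_x g|\le C\Lip(g)$ for fragment-wise charts, which gives $|\ud_x(\xi\circ f)|\le C\|\xi\|\Lip(f)$.

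Next, by RNP, for every fragment $\gamma$ the map $f\circ\gamma$ is differentiable a.e. in $Y$, and $\xi((f\circ\gamma)'_t)=(\xi\circ f\circ\gamma)'_t$. Hence, for $\Mod_\infty$-a.e. $\gamma$ and a.e. $t$,
\[
\xi_k\big((f\circ\gamma)'_t\big)=\ud_{\gamma_t}(\xi_k\circ f)((\varphi\circ\gamma)'_t)\quad\text{for all }k.
\]
Because $\{\xi_k\}$ is norming, the vector $(f\circ\gamma)'_t\in Y$ is determined by $x=\gamma_t$ and $v=(\varphi\circ\gamma)'_t$ alone. Moreover it depends linearly and boundedly on $v$ in the weak sense given by the $\xi_k$.

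The key step, and the main obstacle, is to show that for $\mu$-a.e. $x\in U$ the "velocities at $x$", i.e. the vectors $(\varphi\circ\gamma)'_t$ with $\gamma_t=x$ over non-exceptional fragments, span $\R^n$. This has to be made rigorous despite individual points being negligible. I would first argue by contradiction. Suppose that on a set $A$ of positive measure the velocities lie, for $\Mod_\infty$-a.e. $\gamma$, in a Borel-measurable proper subspace $V_x$. Choose a measurable field of nonzero covectors $\eta_x$ annihilating $V_x$. Then $\ud_x g+\eta_x\chi_A$ would be a second admissible differential for any $g$, contradicting the uniqueness built into the chart definition. The delicate part is producing $V_x$ measurably, as the $\mu$-essential span of velocities. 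I would try to obtain it as the minimal measurable bundle containing all such velocities, exactly in the spirit of the decomposability bundle of Definition \ref{def:decomp-bundle}, via a countable exhaustion and minimality argument.

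Given spanning, I pick for a.e. $x$ velocities $v_1,\dots,v_n$ forming a basis (measurably, through countably many fragment families). I then define $\ud_xf\in\leb(\R^n,Y)$ by $\ud_xf(v_i)=(f\circ\gamma_i)'_{t_i}\in Y$ and extend linearly. The consistency identity above, tested against the norming $\xi_k$, shows three things:
\begin{itemize}
\item this is independent of the choices;
\item $\xi_k\circ\ud_xf=\ud_x(\xi_k\circ f)$;
\item the formula $(f\circ\gamma)'_t=\ud_{\gamma_t}f((\varphi\circ\gamma)'_t)$ holds for $\Mod_\infty$-a.e. $\gamma$, since any velocity is a combination of the $v_i$ and both sides agree after applying every $\xi_k$.
\end{itemize}
Uniqueness of $\ud_xf$ follows from the same spanning property: two candidates agree on all velocities, hence on $\R^n$. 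Alternatively, compose with the $\xi_k$ and invoke scalar uniqueness.
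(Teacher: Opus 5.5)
\textbf{Gap in the spanning step.} Your overall scheme matches the paper's. You scalarize with a countable norming family, get $\Q$-linearity of $\xi\mapsto \ud_x(\xi\circ f)$ from scalar uniqueness, and then use a directional-richness property at $\mu$-a.e.\ point to force the differential into $Y$. The paper builds the adjoint $\ud_xf:\R^n\to Z^*\subset V^{**}$ and invokes density of directions \cite[Theorem 6.4]{tsd24} together with closedness of $W$. Your basis-of-velocities construction would need only spanning, which is indeed enough.

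The problem is in the step you yourself flag. Running your contradiction with the \emph{minimal measurable bundle} $V^{\min}$ proves only that $V^{\min}_x=\R^n$ for $\mu$-a.e.\ $x\in U$. That is, no measurable proper subbundle captures the velocities of $\Mod_\infty$-a.e.\ fragment. It does not show that at $\mu$-a.e.\ \emph{individual} point $x$ there are non-exceptional fragments through $x$ whose velocities $(\varphi\circ\gamma)'_t$ form a basis. Yet that pointwise statement is exactly what you use when you set $\ud_xf(v_i)=(f\circ\gamma_i)'_{t_i}$. To compare $V^{\min}$ with the pointwise span $S_x$ of good velocities, you would need $x\mapsto S_x$ to be measurable, so that minimality gives $V^{\min}\subset S$ a.e. Defining $V_x$ as the minimal measurable bundle sidesteps this measurability issue rather than resolving it. The passage from essential to pointwise spanning is a genuine Fubini-type statement.

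\textbf{Two ways to close it.} The first is what the paper relies on: the independence half of the chart definition. The $n$ $\varphi$-independent Alberti representations of $\mu|_U$ (or representations in the $\varphi$-direction of arbitrary cones, as in Lemma \ref{lem:densityofdir}) give, via a measurable selection as in the proof of Proposition \ref{prop:linear-part-of-blowup}, fragments through a.e.\ $x$ with velocities in independent cones. These can be taken non-exceptional, since $\mu(N)=0$ forces $\mu_\gamma(N)=0$ for $\mathbb P$-a.e.\ $\gamma$.

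The second keeps your uniqueness argument but applies it to a bundle that is measurable by construction.
\begin{enumerate}
\item Use the pointwise bound $|\ud_x g|_x=|Dg|_*(x)\le\LIP(g)$ from \cite[Theorem 1.2]{tsd24}; note the constant comparing $|\cdot|_x$ with the Euclidean norm depends on $x$, not a uniform $C$. With it, extend $\xi\mapsto\ud_x(\xi\circ f)(v)$ from $D$ to $Z=\overline{D}\subset Y^*$. This gives a linear map $\Lambda_x:\R^n\to Z^*$.
\item Since $\{\xi_k\}$ is norming, $Y$ embeds isometrically in $Z^*$ as a closed subspace. Set $G_x=\Lambda_x^{-1}(Y)$.
\item The graph of $G$ is Borel, because $\dist_{Z^*}(\Lambda_x(v),Y)=\inf_m\sup_{\xi\in D,\,\|\xi\|\le1}|\ud_x(\xi\circ f)(v)-\xi(y_m)|$ for any dense sequence $(y_m)\subset Y$.
\item $G$ contains every non-exceptional velocity. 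There $\Lambda_{\gamma_t}((\varphi\circ\gamma)'_t)$ and $(f\circ\gamma)'_t$ agree on $D$, hence on $Z$.
\item Your $\eta_x$-argument, as in Lemma \ref{lem:chartchar}, then gives $G_x=\R^n$ a.e. So $\ud_xf:=\Lambda_x\in\leb(\R^n,Y)$, with no pointwise spanning needed.
\end{enumerate}
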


We also prove a fragment-wise $w^*$-differentiation Theorem in Section \ref{sec:w-star}. Using Theorem \ref{thm:fragmentwise} we give a characterization of RNP-LDS in terms of the pointwise Lipschitz constant, in the spirit of \cite[Theorem 1.5.]{tsd24} and \cite[Theorem 1.19]{sch16b}, \cite[Theorem 1.4.]{bkt19}. We formulate Theorem \ref{thm:RNP-LDS-char} below in terms of arbitrary Banach spaces $Y$. In the statement, we say that $(X,\mu)$ is a $Y$-LDS if every $f\in \LIP(X,Y)$ is differentiable in the sense of \eqref{eq:LDS-diff} $\mu$-a.e..

\begin{theorem}\label{thm:RNP-LDS-char}
Let $Y$ be an RNP-Banach space, and $(X,\mu)$ a metric measure space. Then the following are equivalent.
\begin{itemize}
\item[(i)] $X$ is $Y$-LDS;
\item[(ii)] $\Lip f=|Df|_\ast$ $\mu$-a.e. on $X$ for all $f\in \LIP(X,Y)$;
\item[(iii)] There is a collection $\omega =\{ \omega_x \}_{x\in X}$ of moduli of continuity such that 
$$\Lip f\le \omega(|Df|_\ast)\quad \mu \mbox{-a.e.}$$ 
for all $f\in \LIP(X,Y)$.
\end{itemize}
\end{theorem}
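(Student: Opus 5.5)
The plan is to prove (i)$\Rightarrow$(ii)$\Rightarrow$(iii)$\Rightarrow$(i), with Theorem \ref{thm:fragmentwise} as the bridge between the pointwise Lipschitz constant and the fragment-wise differential. Here $|Df|_\ast$ denotes the minimal $\infty$-weak upper gradient (the fragment-wise speed of $f$). Both sides are determined locally, so we may restrict to a single $n$-dimensional fragment-wise chart $(U,\varphi)$. Every Cheeger chart is such a chart, and under each of the hypotheses $X$ will be covered by such charts: under (i) because a $Y$-LDS is in particular an LDS, and under (iii) by the $\R$-valued case of \cite[Theorem 1.5]{tsd24}, since $\R$-valued Lipschitz functions factor through $Y$ along a line. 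On $U$, Theorem \ref{thm:fragmentwise} provides $\ud_xf\in\leb(\R^n,Y)$ with $(f\circ\gamma)'=\ud f((\varphi\circ\gamma)')$ along $\Mod_\infty$-a.e. fragment. I will first identify
\[
|Df|_\ast(x)=\|\ud_xf\|_x:=\sup\{|\ud_xf(v)|: v\in\R^n,\ |v|_x\le 1\},
\]
where $|\cdot|_x$ is the norm on $\R^n$ induced by $\varphi$, i.e. the fragment-wise speed norm, computed as in \cite{tsd24} by testing $\varphi$ against $\Mod_\infty$-a.e. fragments. The inequality $\le$ comes from the chain rule along fragments. The inequality $\ge$ comes from the abundance of fragments with prescribed direction, which is the defining property of a fragment-wise chart: for a.e. $x$ and a dense set of directions $v$ there are fragments through $x$ with $(\varphi\circ\gamma)'\approx v$.

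For (i)$\Rightarrow$(ii), take a Cheeger $Y$-differential $Df(x)$. Restricting the differentiability condition \eqref{eq:LDS-diff} to fragments shows that $Df(x)=\ud_xf$, by the uniqueness in Theorem \ref{thm:fragmentwise}. Differentiability then gives $\Lip f(x)=\limsup_{y\to x}|\ud_xf(\varphi(y)-\varphi(x))|/d(x,y)$. It remains to show this equals $\|\ud_xf\|_x$, i.e. that the metric blow-up norm of $\varphi$ agrees with the fragment-wise norm in an LDS. This is the known equality $\Lip=\lip=|D\cdot|_\ast$ for real-valued functions in LDS's (\cite{tsd24}, \cite{sch16b}). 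I apply it to the real functions $\lambda\circ f$ for $\lambda$ in a countable norming set of functionals for the separable span of $f(X)$, and take suprema, using finite dimensionality of $\leb(\R^n,\cdot)$ restricted to the unit ball of $|\cdot|_x$. (ii)$\Rightarrow$(iii) is trivial with $\omega_x(t)=t$.

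For (iii)$\Rightarrow$(i), fix $f\in\LIP(X,Y)$ and $x\in U$ generic. Let $A:=\ud_xf$, which exists by Theorem \ref{thm:fragmentwise}. Approximate $A$ by a globally defined linear map $L\circ\varphi$ with $L\in\leb(\R^n,Y)$ taking countably many values (rational combinations from a countable set dense in the range), and consider $g=f-L\circ\varphi$. Then $\ud_x g=A-L$ is small, so $|Dg|_\ast(x)\le\epsilon$ by the identification above. Hypothesis (iii) gives $\Lip g(x)\le\omega_x(\epsilon)$, and $\Lip g(x)$ is exactly the differentiability defect of $f$ at $x$ with candidate $L$. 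Letting $\epsilon\to0$ along the countable family yields \eqref{eq:LDS-diff} with $\ud_xf$ at a.e. $x$.

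The main obstacle is the measurability and uniformity in this last step. The exceptional null sets must be chosen independently of $\epsilon$. To arrange this, I will apply (iii) only to the countable family $g_L$, and use a Lusin-type approximation of $x\mapsto\ud_xf$, so that for a.e. $x$ there are $L$ in the family arbitrarily close to $\ud_xf$. I also need the identity $|Dg_L|_\ast(x)=\|\ud_xf-L\|_x$ to hold simultaneously for all $L$ off one null set. This follows from the locality of $|D\cdot|_\ast$ together with Theorem \ref{thm:fragmentwise} applied countably many times.
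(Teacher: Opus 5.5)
Your proposal is correct and follows essentially the paper's route. For (i)$\Rightarrow$(ii) you scalarize through a countable norming family and invoke the real-valued identity from \cite{tsd24}. For (iii)$\Rightarrow$(i) you apply (iii) to $f-L\circ\varphi$ for a countable dense family of $L\in\leb(\R^n,W)$ and bound $|D(f-L\circ\varphi)|_\ast$ by $\|\ud_xf-L\|$ via the fragment-wise chain rule of Theorem \ref{thm:fragmentwise}; this is the paper's $\Phi_L\circ g$. Your direct approximation of $\ud_xf$ within that family replaces the paper's extension of the inequality to all $L$ by continuity.

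One small point: the lower bound $|Df|_\ast\ge\|\ud_xf\|_x$ does not follow from density of directions alone, since it needs fragment speeds to realize $|\cdot|_x$. It is most cleanly obtained as in the paper, from Lemma \ref{lem:ugscalarized} combined with the scalar identity $|Dh|_\ast=|\ud_xh|_x$ and finite-dimensional duality.
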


\begin{remark} We make a few remarks on the connections of differentiability and the conditions in the previous theorem to rectifiability.
In \cite[Theorem 1.2]{terirect} metric differentiability of maps into arbitrary targets was shown to imply rectifiability. In contrast, the condition $\Lip f\lesssim |Df|_\ast$ for Lipschitz maps into arbitrary targets is not sufficient to imply rectifiability. Indeed, any PI-space will satisfy this condition by an argument in \cite{cheegerkleiner}. For scalar valued $f$ this condition holds whenever $(X,\mu)$ is a LDS, cf. \cite[Theorem 1.15.]{sch16b} or \cite[Theorem 1.5.]{tsd24}. 
\end{remark}

\subsection{Applications and further discussion}

As an application of our results we address the following question posed by Le Donne \cite[Question 1.8.]{ledonne}: characterize doubling spaces $(X,\mu)\subset \ell^2$ for which the GH-tangents at $\mu$-a.e. $x\in X$ are realized as Hausdorff limits of $r\inv(X-x)$ as $r\to 0$. Below we give an answer to this problem, when additionally the Hausdorff tangent is equal to the decomposability bundle. This gives a simple sufficient condition for the existence of Hausdorff limits in terms of being LDS.

\begin{theorem}\label{thm:le donne}
    Suppose $Y$ is an RNP-Banach space, $(X,\mu)\subset Y$ and $\mu$ vanishes on porous sets of $X$. Then the following are equivalent.
\begin{itemize}
    \item[(i)] $(X,\mu)$ is an LDS;
    \item[(ii)] $(r\inv X,x)$ pGH-converges to $T_\mu(x)$ for $\mu$-a.e. $x\in X$;
    \item[(iii)] $r\inv(X-x)$ Hausdorff converges to $T_\mu(x)$ for $\mu$-a.e. $x\in X$.
\end{itemize}
\end{theorem}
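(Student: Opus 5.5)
We prove the cycle (i)$\Rightarrow$(iii)$\Rightarrow$(ii)$\Rightarrow$(i).

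\textbf{(iii)$\Rightarrow$(ii).} This is immediate. Hausdorff convergence of $r\inv(X-x)$ to $T_\mu(x)$ inside $Y$, with base point $0$, gives pointed Gromov--Hausdorff convergence of $(r\inv X,x)$ to $(T_\mu(x),0)$. Hausdorff convergence of sets in a common ambient space is a particular realization of pGH-convergence.

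\textbf{(i)$\Rightarrow$(iii).} Here we follow the strategy of Theorem \ref{thm:LDS-in-RNP}.
\begin{enumerate}
\item By Proposition \ref{prop:linear-part-of-blowup}, the Cheeger charts give fragment-wise charts. Hence $T_\mu(x)$ exists and is finite dimensional, with dimension equal to the chart dimension $\mu$-a.e.
\item The blow-up analysis of the proof of Theorem \ref{thm:LDS-in-RNP} shows that, at $\mu$-a.e.\ $x$, every Hausdorff subsequential limit of $r\inv(X-x)$ is of the form $Z\times T_\mu(x)$. The approximate product structure comes from Lemma \ref{lem:e-close-to-product}.
\item The tile-function argument of Schioppa, applied as in that proof, forces $Z=\{0\}$.
\item Since every subsequential limit equals $T_\mu(x)$, the whole family $r\inv(X-x)$ Hausdorff converges to $T_\mu(x)$ as $r\to0$.
\end{enumerate}
One point needs care: subsequential Hausdorff limits of $r\inv(X-x)$ need not exist in a general Banach space. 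I would avoid this by arguing by contradiction. If $T_\mu(x)\not\subset$ some $\epsilon$-neighbourhood of $r\inv(X-x)$ along a sequence, or conversely, we run the same approximate-splitting argument quantitatively on balls. The inclusion of limits into $T_\mu(x)$ also uses that fragments through $x$ have tangents in $T_\mu(x)$. The reverse inclusion uses that $T_\mu(x)$ is spanned by fragment directions, so $X$ contains points near $x+rv$ for every $v\in T_\mu(x)$.

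\textbf{(ii)$\Rightarrow$(i).} This is the step I expect to be the main obstacle. Since $\mu$ vanishes on porous sets, the tangents of $\mu$ at $x$ have full support in the tangents of $X$. By (ii), the tangent measure spaces are therefore $(T_\mu(x),c\,\leb^n)$, i.e.\ Euclidean. I would then apply a fragment-wise-to-full upgrade.
\begin{enumerate}
\item Take local charts $\varphi$ given by a linear projection onto $T_\mu(x)$, localized to sets of positive measure where $\dim T_\mu$ is constant. These are fragment-wise charts, because $T_\mu$ contains all fragment tangents.
\item Theorem \ref{thm:fragmentwise}, with $Y=\R$, yields fragment-wise differentials $\ud_xf$.
\item To obtain genuine differentiability we verify condition (iii) of Theorem \ref{thm:RNP-LDS-char} for real-valued $f$. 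Equivalently, we show $\Lip f\le C|Df|_\ast$, or better $\Lip f=|Df|_\ast$.
\end{enumerate}
For the inequality, blow up $f$ at $x$. Rescalings converge to a Lipschitz function on the tangent $T_\mu(x)$, whose restriction to lines in $T_\mu(x)$ is affine with slope given by $\ud_xf$. Affinity along lines uses that $\mu$-a.e.\ fragment derivatives are governed by $\ud_xf$, together with non-porosity, to transport this to a.e.\ lines in the limit. This forces the blow-up to be the linear map $\ud_xf$, which gives $\Lip f(x)=|\ud_xf|$.

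The delicate point is justifying that a.e.\ lines in the tangent are limits of fragments along which the differential relation holds uniformly. I would try a Lebesgue-density/Egorov argument on $\Mod_\infty$-families of fragments, combined with the porosity assumption.
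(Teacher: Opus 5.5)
Your (iii)$\Rightarrow$(ii) is immediate. Your (i)$\Rightarrow$(iii) is in substance the paper's (i)$\Rightarrow$(ii)$\Rightarrow$(iii): Theorem \ref{thm:no-extra-factors} identifies every tangent with $T_\mu(x)$, and the approximate splitting of Lemma \ref{lem:e-close-to-product} upgrades this to Hausdorff convergence. Note that it is the exclusion of the extra factor $Z$, not the fact that fragment tangents lie in $T_\mu(x)$, that controls the part of $r\inv(X-x)$ transverse to $T_\mu(x)$. The gap is (ii)$\Rightarrow$(i). Everything there rests on the claim that each blow-up $f_\infty$ of $f$ at $x$ equals $\ud_xf\circ p$ on $T_\mu(x)$. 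That claim is only announced together with a plan, so $\Lip f\lesssim|Df|_\ast$, and hence (i), is not established. The plan as phrased also targets the wrong object: a $\Mod_\infty$-a.e.\ statement only tells you which fragments to discard. It gives you no fragments that survive rescaling. Separately, deducing that the tangent measure is $c\leb^n$ from its full support is a non sequitur, since singular doubling measures with full support exist. That conclusion would follow from Theorem \ref{thm:tang=prod}(ii), but your route does not actually need it.

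The step can be closed along your lines as follows.
\begin{itemize}
\item For a narrow cone $C$, use an Alberti representation in the $p$-direction of $C$, as in the proof of Lemma \ref{lem:densityofdir}. This lets you select, measurably and for $\mu$-a.e.\ $y$, a fragment $\gamma^y$ with $\gamma^y_0=y$, $0$ a density point of $\dom(\gamma^y)$, $p(\gamma^y)'_0\in C$, and $(f\circ\gamma^y)'_0=\ud_yf(p(\gamma^y)'_0)$.
\item By Lusin and Egorov, on a compact $K'$ the maps $y\mapsto\ud_yf$ and $y\mapsto(\gamma^y)'_0$ are continuous. On $K'$, the first-order expansions of $\gamma^y$ and $f\circ\gamma^y$ at $0$, and the density of $\dom(\gamma^y)$ at $0$, hold uniformly.
\item Since $\mu$ vanishes on porous sets, for a.e.\ $x\in K'$ the set $K'$ is $o(r)$-dense in $X\cap B(x,Rr)$. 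So every point $w$ of the tangent is represented by base points in $K'$.
\item This gives $f_\infty(w+sv)=f_\infty(w)+s\,\ud_xf(pv)$ for all $w$ and $s$, with $v=(\gamma^x)'_0$. Varying $C$ then yields $f_\infty=\ud_xf\circ p$, with no appeal to the tangent measure.
\end{itemize}

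The paper avoids this differential argument altogether. Your own (i)$\Rightarrow$(iii) argument uses the LDS hypothesis only to know that the tangent is $T_\mu(x)$ (plus finite dimensionality and charts, which under (ii) come from Proposition \ref{prop:porous-tangent} and Lemma \ref{lem:proj-are-charts}). So it already proves (ii)$\Rightarrow$(iii): the sets $r\inv Z_{rR}$ of Lemma \ref{lem:e-close-to-product} contain $0$ and GH-converge to a point, hence Hausdorff converge to $\{0\}$. The paper then proves (iii)$\Rightarrow$(i) by rectifiability. Hausdorff convergence to $T_\mu(x)$ gives $\liminf_{y\to x}\|p_x(y-x)\|/\|y-x\|>0$. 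Hence $X$ splits into pieces on which the chart is bi-Lipschitz and $\mu\ll\Ha^n$ (Proposition \ref{prop:bilip-decomp}, \cite{de2017conjecture}). Countable rectifiability together with $\mu$ vanishing on porous sets then gives the LDS property. Your route, once completed, reaches (i) directly through the $\Lip$--$|Df|_\ast$ characterization \cite[Theorem 1.5]{tsd24} and bypasses rectifiability. The paper's route is softer once (iii) is available.
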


In general, if $r\inv(X-x)$ Hausdorff converges to a limit $H_x$ as $r\to 0$, then one can show that for $\mu$-a.e. $x\in X$ the tangent $H_x$ is necessarily a subspace. This follows by combining three facts, and we only sketch this argument here since we do not directly need it. First, every limit $H_x$ satisfies $tH_x=H_x$ for $t>0$, since we obtain $tH_x$ as the limit of $(t^{-1}r)^{-1}(X-x)$. This implies that if $y\in H_x$ is non-zero, then the tangent at $y$ of $H_x$ contains a line. The Preiss phenomenon states that a tangent of a tangent is a tangent, and by uniqueness, therefore, if $H_x$ is not a singleton, it must contain a line. If $\mu$ is doubling, this follows from \cite[Theorem 1.1.]{ledonne}, and for measures vanishing on porous subsets, this is shown in the third authors and Cakovic's work \cite{soultaniscakovic}. This argument can be iterated: if $L$ is a maximal dimensional subspace contained in $H_x$, then if there is some $y$ not in this subspace, then at $y$ the span of $y$ and $L$ is contained in the Hausdorff tangent of $H_x$. This contradicts the maximality of $L$ and thus $L=H_x$.

\begin{remark}
It is possible that $r\inv(X-x)$ Hausdorff converges to a subspace $H_x$ which is strictly larger than $T_\mu(x)$ $\mu$-a.e. $x$. In this case $(X,\mu)$ is purely unrectifiable, and is not an LDS. For example if $\mu$ is a singular doubling measure on $X=\R$ (constructed e.g. in , then $T_\mu(x)=\{0\}$ $\mu$-a.e. $x$ but $r\inv(X-x)=\R$ for all $r$.
\end{remark}

We end this introduction with an open problem on differentiability. 
\begin{problem}
    Is every LDS an $\ell_2$-LDS?
\end{problem}

All known examples \cite{waldbate,schioppadiff} satisfy $\ell_2$-differentiability. In fact, in these examples, $\ell_2$-differentiability is used to prove Lipschitz differentiability. The examples in \cite{waldbate} show that for every $p>2$ there exists an LDS which is not an $\ell_p$-LDS.  It seems that some of the constructions of non-differentiable functions used in this paper may be useful for this problem. Our methods do not, however, distinguish between $\ell_p$ and $\ell_2$. We also note that the problem seems closely related to the Lang-Plaut problem mentioned above.

\section{Preliminaries}

\subsection{Notation and conventions}

A metric measure space consists of a complete separable metric space $X=(X,d)$ equipped with a Borel regular measure $\mu$ which is finite on balls. We often denote a metric measure space by $(X,\mu)$. We use the standard notation $B(x,r)$ and $\bar{B}(x,r)$ for the open and closed balls that are  centered at $x\in X$ and radius $r>0$, and $\lambda B(x,r)=B(x,\lambda r)$ for $\lambda >0$. A measure $\mu$ is called doubling if there exists a constant $D>1$ s.t. $\mu(2B)\leq D\mu(B)$ for all balls $B=B(x,r)\subset X$. For non-empty $A\subset X$ and $x\in X$, write $\dist_A(x)=\inf_{a\in A} d(a,x)$. A metric space $X$ is called metrically doubling, if there exists a constant $N\in \mathbb{N}$ so that for every ball $B(x,R)\subset X$ there are $N$ points $x_1,\dots, x_N\in X$ s.t. $B(x,R)\subset \bigcup_{i=1}^N B(x_i, R/2)$.

Let $(V,\Vert\cdot\Vert )$ be a Banach space. For a mapping $f:X\rightarrow V$ we denote
\begin{align*}
&\LIP (f):=\sup_{x,y\in X}\frac{\Vert f(x)-f(y)\Vert}{d(x,y)},\\
&\Lip f(x):=\lim_{r\to 0^+}\sup_{y\in B(x,r)\setminus\{ x\}}\frac{\Vert f(x)-f(y)\Vert}{r},\quad\mbox{and}\\
&\Lip_af(x):=\lim_{r\to 0^+}\LIP(f|_{B(x,r)})\quad \mbox{for}\; x\in X.
\end{align*}
If $x\in X$ is an isolated point then we let $\Lip f(x)=0=\Lip_af(x)$. We say that $f$ is Lipschitz if $\LIP (f)<\infty $, and denote the collection of all Lipschitz maps $X\to V$ by $\LIP (X,V)$. Recall that a Banach space $V$ has the Radon-Nikodým property, RNP for short, if every Lipschitz mapping $f:[a,b]\rightarrow V$ is differentiable almost everywhere, in the sense that for a.e. $t\in [a,b]$ there exists a linear map $\ud_tf:\R\rightarrow V$ so that 
$\Lip (f-\ud_tf)(t)=0$.

\subsection{Curve fragments and $*$-upper gradients}

\subsubsection*{Curve fragments.} Denote by $\Fr (X)$ the set of all curve fragments in $X$, i.e., bi-Lipschitz mappings $\gamma :\dom (\gamma )\rightarrow X$ from a compact set $\dom (\gamma )\subset \R$. Here, by bi-Lipschitz we mean that there exists a constant $L>0$ so that $L^{-1}|b-a|\leq d(\gamma_a,\gamma_b)\leq L|b-a|$ for all $a,b\in\dom (\gamma )$. Notice that we are adopting the notation $\gamma_t=\gamma (t)$ when evaluating a curve fragment.

Let $I_\gamma :=\conv (\dom (\gamma ))=[a,b]$, then $I_\gamma\setminus\dom (\gamma )$ is an open set in $\R$ and thus it is a countable union of open intervals $(a_i,b_i)$, $i\in I\subset \N$. We call such intervals gaps of $\gamma$ and define the gap magnitude of $\gamma$ as
$$\gap (\gamma ):=\sum_{i\in I} d(\gamma_{a_i},\gamma_{b_i}).$$
We refer to \cite[Section 2.2]{tsd24} for the definition of the semihull $\sh (X)$ that allows to extend $\gamma$ to a Lipschitz curve $\overline{\gamma}:I_\gamma\rightarrow \sh (X)$. This extension might not be bi-Lipschitz, but since it is Lipschitz by \cite[Proposition 1]{kir94} it has a metric speed a.e.  and then, by restricting back to $\dom (\gamma )$, we obtain that
$$\vert\gamma^\prime_t\vert :=\lim_{\dom (\gamma )\ni s\to t}\frac{d(\gamma_s,\gamma_t)}{|s-t|}\quad\mbox{exists for a.e. }\; t\in \dom(\gamma)$$

If $Y$ is an RNP-Banach space and $\gamma\in\Fr(Y)$, then 
\begin{align}\label{eq:frag-derivative}
\gamma'(t)=\lim_{s\to t}\frac{\gamma(s)-\gamma(t)}{s-t}\quad\mbox{exists for a.e. }\; t\in \dom(\gamma).
\end{align}
Indeed, the curve $\gamma:I_\gamma\to Y$ obtained by extrapolating linearly in $I_\gamma\setminus\dom(\gamma)$ is differentiable a.e. on $I_\gamma$, and for all 
Lebesgue density points of $\dom(\gamma)$ where the derivative exists, it agrees with the limit in \eqref{eq:frag-derivative}.  
Moreover, we have that
\begin{align*}
    |\gamma_t'|=\|\gamma'(t)\|_Y\quad\mbox{a.e. }\;t\in \dom(\gamma).
\end{align*}

Similarly, if $f\in\LIP(X,Y)$ and $\gamma\in \Fr(X)$, then 
\begin{equation}\label{eq:diffcomposition}
(f\circ\gamma )^\prime (t)=\lim_{\dom (\gamma \ni s\to t}\frac{f(\gamma_s)-f(\gamma_t)}{|s-t|}
\end{equation}
exists for almost every $t\in\dom (\gamma )$.

\subsubsection*{*-Upper gradients.} For a curve fragment $\gamma\in \Fr (X)$ we define the line integral of a Borel function $\rho :X\rightarrow [0,\infty )$ by
$$\int_\gamma \rho\, ds :=\int_{\dom (\gamma )}\rho (\gamma_t )|\gamma_t^\prime |\, dt=\int_{\mathrm{Im} (\gamma )}\rho\, d\mathcal H^1.$$
For a family $\Gamma\subset \Fr (X)$ of curve fragments we define its $\infty$-modulus as
$$\Mod_\infty (\Gamma ):=\inf\left\{ \Vert \rho\Vert_{L^\infty (X)}:\int_{\gamma}\rho\, ds \geq 1\mbox{ for all }\gamma\in\Gamma \right\}.$$
We say that a property $P$ holds for $\Mod_\infty$-a.e. curve fragment if the collection of curve fragments $\Gamma_P$, which fail $P$, satisfies $\Mod_\infty(\Gamma_P)=0$. By a classical argument, see e.g. \cite[Lemma 2.7]{tsd24}, this is equivalent to the existence of a null set $N\subset X$ for which each $\gamma\in \Gamma_P$ satisfies $\int_\gamma 1_N ds >0$.
For a map $f:X\rightarrow V$ and $\gamma\in\Fr (X)$ denote $\displaystyle \mathrm{osc}_\gamma f:=\sup_{x,y\in \mathrm{Im}(\gamma )}\Vert f(x)-f(y)\Vert $.
\begin{definition}
Let $f\in\LIP (X;V)$, we say that a Borel function $\rho :X\rightarrow [0,\infty )$ is a $*$-upper gradient of $f$ if
\begin{equation}\label{eq:*ug}
\mathrm{osc}_\gamma f\leq \int_\gamma \rho\, ds +\LIP (f)\gap (\gamma )
\end{equation}
for every $\gamma\in \Fr (X)$. If the family $\Gamma_0\subset \Fr (X)$ of curve fragments such that \eqref{eq:*ug} does not hold satisfies $\Mod_\infty (\Gamma_0 )=0$ then we say that $\rho$ is a weak $*$-upper gradient.
\end{definition}
We recall that, due to \cite[Proposition 2.10]{tsd24}, every Lipschitz map $f\in \LIP (X)$ admits a minimal $*$-upper gradient $|Df|_*$. We will see in Lemma \ref{lem:ugscalarized} that this is also the case for Lipschitz mappings into a Banach space, and we will adopt the same notation, $|Df|_*$, for the minimal $*$-upper gradient of a map $f:X\rightarrow V$. Moreover, the same arguments as in \cite[Remark 2.9]{tsd24} yield that any weak $*$-upper gradient of a map in $\LIP (X,V)$ admits a $\mu$ representative that is a $*$-upper gradient.

\subsection{Fragment-wise charts}
\subsubsection*{Alberti representations and fragment-wise charts}  An Alberti representation $\mathcal A=\{\mu_\gamma,\mathbb P\}$ of a measure $\mu$ on $X$ consists of a finite positive measure $\mathbb P$ on $\Fr (X)$ and a family $\{\mu_\gamma\}$ of probability measures on $X$ such that
\begin{itemize}
	\item[(a)] $\mu_\gamma\ll\mathcal H^1|_{{\rm Im}(\gamma)}$ $\mathbb P$-a.e. $\gamma$;
	\item[(b)] $\gamma\mapsto \mu_\gamma(B)$ is $\mathbb P$-measurable and $\displaystyle \mu(B)=\int\mu_\gamma(B)\ud \mathbb P(\gamma)$ for every Borel $B\subset X$.
\end{itemize}

Given $z\in S^{n-1}$, $\varepsilon>0$, define the cone in the $z$-direction of width $\varepsilon$ as
$$C(z,\varepsilon):=\{p\in \R^n: z\cdot p> (1-\varepsilon)|p| \}$$
and we say that cones $C_1,\dots ,C_n$ are independent if $v_1,\dots ,v_n$ are linearly independent for any choice of $v_i\in C_i$.

Let  $\varphi\in \LIP(X,\R^n)$; we say that the Alberti representation $\mathcal A$ is in the $\varphi$-direction of $C(z,\varepsilon)$ if $(\varphi\circ\gamma)'(t)\in C(z,\varepsilon)$ a.e. $t\in {\rm dom}(\gamma)$ for $\mathbb P$-a.e. $\gamma\in {\rm Fr}(X)$.
We say that $\mathcal A_1,\dots ,\mathcal A_n$ are $n$ $\varphi$-independent Alberti representations if there are independent cones $C_1,\dots ,C_n$ so that $\mathcal A_i$ is in the $\varphi$-direction of $C_i$ for each $i=1,\dots ,n$.
  We refer to \cite{bate2013differentiability,sch16b}  for more information on Alberti representations.

\begin{definition}\label{def:fwchart}
We call the pair $(U,\varphi )$ a fragmwnt-wise chart of dimension $n$ in $X$ if $U\subset X$ is a Borel set of positive measure, $\varphi :X\rightarrow \R^n$ is Lipschitz and the following two conditions hold:
\begin{enumerate}
    \item[$(i)$] \textbf{\em Independence:} $\mu|_U$ admits $n$ $\varphi$-independent Alberti representations.
    \item[$(ii)$] \textbf{\em Maximality:} If $V\subset U$ and $\mu|_V$ admits $k$ $\psi$-independent Alberti representations, for some Lipschitz $\psi:X\to \R^k$, then $k\leq n$.
\end{enumerate}
\end{definition}

Existence of a maximal collection of independent Alberti representations, in the sense of Definition \ref{def:fwchart}, holds for any metric measure space with finite Hausdorff dimension \cite[Theorem 5.5]{tsd24}. Moreover, this is enough to construct a differential along curves of Lipschitz maps, see \cite[Theorem 1.2]{tsd24} for the scalar case. Furthermore, a description of the $*$-upper gradients through a seminorm of this differential is given in \cite{tsd24}, which also gives a characterization of Lipschitz differentiability spaces in terms of $*$-upper gradients \cite[Theorem 1.5]{tsd24}. These should be contrasted with an ealier description of a differential in terms of directional derivatives in \cite[Corollary 2.13]{bate12diff}. We will see in Section \ref{sec:fw} that these ideas extend to RNP-valued mappings; to this end, we formally introduce the notion of fragment-wise differential for RNP-valued mappings.
\begin{definition}
Let $(U,\varphi )$ be a chart in a metric measure space $(X,d,\mu )$, $V$ a RNP Banach space and $f:X\rightarrow V$. We say that $\ud_xf\in \leb(\R^n,V)$ is a fragment-wise differential of $f$ at a point $x\in U$, with respect to $(U,\varphi )$, if
\begin{align*}
(f\circ\gamma)'_t= \ud_{\gamma_t}f((\varphi\circ\gamma)_t')\quad\textrm{a.e. }t\in \gamma^{-1} (U)
\end{align*}
for $\Mod_\infty$-a.e. $\gamma\in\Fr(X)$. 
\end{definition}

We will use the following alternative characterization of fragment-wise chart later in the paper, which also may be of independent interest. We emphasize that the statement only involves real-valued Lipschitz functions -- later in Section \ref{sec:fw} we will study the existence of the fragment-wise  differential for Banach valued mappings.

\begin{lemma}\label{lem:chartchar} Let $U\subset X$ be Borel and $\varphi:X\to \R^n$ be Lipschitz. Then $(U,\varphi)$ is a fragment-wise chart if and only if for every $f:X\to \R$ there exists a a.e. unique fragment-wise differential of $f$ with respect to $(U,\varphi)$.
\end{lemma}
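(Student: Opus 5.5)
We prove the two implications separately, leaning on the scalar theory of \cite{tsd24}.

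\emph{($\Rightarrow$).} Suppose $(U,\varphi)$ is a fragment-wise chart. Then \cite[Theorem 1.2]{tsd24} gives, for every Lipschitz $f:X\to\R$, a differential $\ud_xf\in(\R^n)^*$ satisfying the fragment-wise identity along $\Mod_\infty$-a.e. fragment. So only uniqueness needs checking. Let $L_x$ and $L'_x$ be two fragment-wise differentials, and let $\mathcal A_1,\dots,\mathcal A_n$ be the $\varphi$-independent Alberti representations of $\mu|_U$, with independent cones $C_1,\dots,C_n$.

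The first step is a standard fact: a $\Mod_\infty$-null family of fragments is $\mathbb P_i$-null for each $\mathcal A_i$. Indeed, by \cite[Lemma 2.7]{tsd24} such a family is contained in $\{\gamma:\int_\gamma 1_N\,ds>0\}$ for some $\mu$-null $N$. Since $\mu(N)=\int\mu_\gamma(N)\,\ud\mathbb P_i$ and $\mu_\gamma\ll\Ha^1|_{\im\gamma}$, this family is $\mathbb P_i$-null (after the usual reduction to $\mu_\gamma$ comparable to $\Ha^1$ on $\gamma^{-1}(U)$).

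It follows that, for $\mu$-a.e.\ $x\in U$ and each $i$, the map $L_x-L'_x$ vanishes on some vector $v_i\in C_i$, namely $v_i=(\varphi\circ\gamma)'_t$ for a fragment of $\mathcal A_i$ passing through $\gamma_t=x$. Independence of the cones makes $v_1,\dots,v_n$ a basis, so $L_x=L'_x$. The measurable-selection details are handled via the disintegration in (b).

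\emph{($\Leftarrow$).} Suppose every $f$ has an a.e.\ unique fragment-wise differential with respect to $(U,\varphi)$. We must produce independence and maximality.

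For maximality, suppose $V\subset U$ carries $k$ $\psi$-independent Alberti representations with $k>n$. We argue as in the first part: the coordinates $\psi_j$ have fragment-wise differentials $\ud_x\psi_j=A_j\circ{}$ evaluated on $(\varphi\circ\gamma)'$. Hence along fragments of each representation $(\psi\circ\gamma)'=M_x(\varphi\circ\gamma)'$ for a linear map $M_x:\R^n\to\R^k$. The $k$ independent cones then force $\operatorname{rank}M_x\ge k>n$, a contradiction.

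For independence, we invoke the existence of a fragment-wise chart decomposition. Here I expect the main obstacle: the existence result \cite[Theorem 5.5]{tsd24} assumes finite Hausdorff dimension. So I would instead argue directly. Let $m\le n$ be the maximal number of $\varphi$-independent Alberti representations on positive-measure subsets of $U$, computed via Bate's decomposition relative to $\varphi$ \cite{bate2013differentiability}. If on some $W\subset U$ the representations only span a proper subspace $E_x\subsetneq\R^n$ in the $\varphi$-directions, then for $\Mod_\infty$-a.e.\ fragment $(\varphi\circ\gamma)'_t\in E_{\gamma_t}$ on $\gamma^{-1}(W)$. This uses Bate's characterization: fragments whose $\varphi$-derivatives avoid a cone around $E_x$ form a family of null modulus, since the measure admits no representation in that cone.

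Consequently any $\ell\in(\R^n)^*$ annihilating $E_x$ can be added to a differential without changing the fragment-wise identity. This contradicts uniqueness unless $E_x=\R^n$ a.e., which gives $n$ independent representations on $U$ after gluing countably many pieces. The delicate point is the measurable choice of $E_x$ and the translation between ``no Alberti representation in a cone'' and ``$\Mod_\infty$-null family'', which is a Hahn--Banach/duality statement. I would cite the corresponding result in \cite{tsd24} (and \cite{bate2013differentiability}) for it.
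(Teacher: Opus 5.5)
Your proposal is correct in outline and has the same skeleton as the paper. ($\Rightarrow$) comes from \cite[Theorem 1.2]{tsd24}, which already contains uniqueness, so your disintegration proof of uniqueness is optional. Maximality is exactly the paper's argument: the image of $M_x$ has dimension at most $n<k$, so it misses some cone $C_i$ on a set of positive measure, and $\mathcal A_i$ then forces that set to be null. Independence is obtained, as in the paper, by contradicting uniqueness with a Borel covector $\lambda_x\neq 0$ that annihilates $(\varphi\circ\gamma)'_t$ for $\Mod_\infty$-a.e.\ $\gamma$.

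The real difference is the tool behind independence. The paper imports from \cite[Propositions 4.5, 4.6, 5.2]{tsd24} a Borel functional $\Phi$ on $(\R^n)^*\times X$. Its zeros give such covectors, and its non-degeneracy set $I$ carries $n$ independent representations of $\mu|_I$. This makes the selection of $\lambda_x$ routine and delivers a single family of independent cones on all of $I$ at once.

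You rebuild the same two facts from Bate's cone decomposition. That makes the geometry transparent (a subspace spanned by representable directions) and needs only that one theorem, but it leaves three things to do by hand:
\begin{itemize}
\item Define $E_x$ in a Borel way, e.g.\ $E_x=\operatorname{span}G_x$, where $G_x$ is the set of directions all of whose neighbourhoods among a countable family of cones $C_j$ have $x$ in the representable part of Bate's decomposition for $C_j$.
\item Get $n$ independent representations from $\operatorname{span}G_x=\R^n$ by choosing narrow cones around independent directions in $G_x$.
\item Realign the cones on your countably many pieces before gluing, since a fragment-wise chart requires one family of independent cones for $\mu|_U$. You can do this using that $n$ independent representations yield representations in every cone direction \cite[Corollary 3.95]{sch16b}, as in Lemma \ref{lem:densityofdir}.
\end{itemize}

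Two smaller points. First, a $\Mod_\infty$-null family need not be $\mathbb P_i$-null. What you, and the paper, actually use is that the fragment-wise identity holds $\mu_\gamma$-a.e.\ for $\mathbb P_i$-a.e.\ $\gamma$. This follows directly from $\mu_\gamma(N)=0$ and $\mu_\gamma\ll\Ha^1|_{\mathrm{Im}(\gamma)}$, with no reduction needed. Second, your concern about the finite Hausdorff dimension hypothesis in \cite[Theorem 5.5]{tsd24} is moot, since neither argument uses the existence of a chart decomposition.
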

\begin{proof}
    The existence and uniqueness of a fragment-wise differential for $f:X\to \R$ with respect to a fragment-wise chart $(U,\varphi)$ was given in \cite[Theorem 1.2]{tsd24}. We show now the converse. 

    To show independence, we consider the Borel functional $\Phi:(\R^n)^*\times X\to [0,\infty]$ given in \cite[Proposition 4.5, Proposition 4.6, Proposition 5.2]{tsd24} that satisfies two properties:
    \begin{enumerate}
        \item If $x\to \lambda_x$ is Borel and $\Phi(\lambda_x, x)=0$ for all $x\in V$ for some Borel subset $V\subset X$, then $\lambda_{\gamma_t}((\varphi \circ \gamma)'_t)=0$ for $\Mod_\infty$-a.e. $\gamma\in \Fr(X)$ and a.e. $t\in \gamma^{-1}(V)$.
        \item For the set $I:=\{x\in U: \inf_{|\lambda|=1} \Phi(\lambda,x)>0\}$ we have that $\mu|_I$ has $n$-independent Alberti representations. 
    \end{enumerate}

    We claim that $\mu(U\setminus I)=0$. If not, then by a Borel selection argument, there exists a Borel map $x\to \lambda_x$ s.t. $\Phi(\lambda_x, x)=0$ and $\lambda_x\neq 0$ for all $x\in U\setminus I$. By (1), for every Lipschitz $f:X\to \R$ we have that  $\ud_x f + c\lambda_x \chi_{U\setminus I}$ is a fragment-wise differential of $f$ for all $c\in \R$. This is a contradiction to uniqueness, and thus $\mu(U\setminus I)=0.$ Therefore, $\mu|_U$ has $n$ independent Alberti representations.

    Next, we argue maximality. Suppose that $V\subset U$ is a Borel set and $\psi:X\to \R^k$ is a Lipschitz map s.t. $\mu|_V$ has $k$ $\psi$-independent Alberti representations. Let $C_1,\dots, C_k$ be the independent cones in the definition of $\psi$-independent Alberti representations. Let $\ud \psi_i$  be the fragment-wise differentials with respect to $\varphi$. For $\mu$-a.e. $x\in V$, the image $T(x)={\rm span}\{(\ud_x \psi_1(e_i), \cdots, \ud_x \psi_k(e_i)) : i=1,\dots, n\}$ is an at most $n$ dimensional subspace of $\R^k$. From the definition of a fragment-wise differential, we see $(\psi \circ \gamma)'_t\in T(\gamma_t)$ for $\Mod_\infty$-a.e. $\gamma\in \Fr(X)$ and a.e. $t\in \dom(\gamma)$. In particular, by \cite[Lemma 2.7]{tsd24} there exists a null set $N$, s.t. $(\psi \circ \gamma)'_t\in T(\gamma_t)$ for all $\gamma\in \Fr(X)$ and a.e. $t\in \dom(\gamma)$ s.t. $\gamma_t\not\in N$.  If $k>n$, there must exist an index $i\in \{1,\dots, k\}$ and a positive measure subset $K$ of $V$ s.t. $\mu|_K$ has an Alberti represention $\{\mu_\gamma,\mathbb{P}\}$ in $\psi$-direction $C_i$ and $C_i\cap T(x)=0$. Then, since $(\psi\circ \gamma)'_t\not\in T(\gamma_t)$ for a.e. $t\in \dom(\gamma)$ and $\mathbb{P}$-a.e. $\gamma\in \Fr(X)$, we have $\mu_\gamma(K)=0$ for $\mathbb{P}$-a.e. $\gamma\in \Fr(X)$. Thus $\mu(K)=0$ by the definition of an Alberti representation, which is a contradiction.
\end{proof}

\subsection{Lipschitz Differentiability Spaces.} Recall from the Section 1.2 that a Cheeger chart (of dimension $n$) in a metric measure space $(X,\mu)$ is a pair $(U,\varphi)$ where $U\subset X$ is a Borel set with $\mu(U)>0$ and $\varphi\in\LIP(X,\R^n)$ satisfies the following: given any $f\in \LIP(X)$, for $\mu$-a.e. $x\in U$ there exists a unique $\ud_xf\in (\R^n)^*$ such that \eqref{eq:LDS-diff} holds. Recall also the definition of a Lipschitz Differentiability space in Definition \ref{def:LDS}.

We refer to \cite{sch16b,kei02,bate12diff,bate2013differentiability} for some background on the many structural properties enjoyed by LDS's. We mention here that if $(X,\mu)$ is an LDS, then $\mu$ vanishes on porous sets \cite[Theorem 2.4]{bate2013differentiability}, and is pointwise doubling  \cite[Corollary 2.6]{bate2013differentiability}. Recall that a set $S\subset X$ is $\eta$-porous at $x\in S$, if there exists a sequence $x_i\to x$ with $$d(x_i,S)\ge\eta d(x_i,x),$$ and porous at $x$ if it is $\eta$-porous at $x$ for some $\eta>0$; we call $S$ porous if it is porous at $x$ for all $x\in S$. 

Furthermore, we say that $\mu$ is point-wise doubling at $x\in \operatorname{spt}\mu$ if
\begin{align*}
    \limsup_{r\to 0}\frac{\mu(B(x,2r))}{\mu(B(x,r))}<\infty.
\end{align*}
A metric measure space $(X,\mu)$ is called pointwise doubling if $\mu$ is point-wise doubling at $x$ for $\mu$-a.e. $x$.

\subsection{Tangents}

Let $Z$ be a metric space and $A_i\subset Z$, $i\in\N\cup\{\infty\}$. We say that $A_i$ boundedly Haurdorff-converges to $A_\infty$ (denoted $A_i\stackrel{H}{\longrightarrow} A_\infty$) if
\begin{align*}
    \lim_{i\to\infty}\sup\{\dist(a,A_i): a\in A_\infty\cap B\}&=0\quad\textrm{ and } \\
     \lim_{i\to\infty}\sup\{\dist(b,A_\infty): b\in A_i\cap B\}&=0
\end{align*}
for all bounded sets $B\subset Z$. Clearly it is sufficient to consider (closed) balls in place of $B$. 
\begin{remark}
The convergence defined above is better known as Attouch--Wets convergence, see e.g. \cite{attwets}, and the quantity appearing in the above limits is known as the excess:
\begin{equation}\label{eq:excess}
e_B(A,C):=\sup\{\dist(a,C):\ a\in A\cap B\}.
\end{equation}
We will occasionally use this notation in the sequel. Note that bounded Hausdorff convergence can be metrized, see e.g. \cite[Definition 2.15]{Bate22} and the discussion in \cite[Section 3]{Badger-Krandel-Vellis26}, specifically \cite[Lemma 3.1]{Badger-Krandel-Vellis26}.
\end{remark}
We say that a sequence of pointed metric measure spaces $(X_i,\mu_i, x_i)$ converges to a pointed metric measure space $(X_\infty,\mu_\infty,x_\infty)$ in the \emph{pointed measured Gromov--Hausdorff} sense, denoted $X_i\stackrel{pmGH}{\longrightarrow}X_\infty$, if there exist pointed isometric embeddings $\iota_i:(X_i,x_i)\to (Z,z)$ ($i\in\N\cup\{\infty\}$) so that $\iota_i(X_i)\stackrel{H}{\longrightarrow}\iota_\infty(X_\infty)$ and moreover $\iota_{i\ast}\mu_i\rightharpoonup \iota_{\infty\ast}\mu_\infty$ in duality with the space $C_{bbs}(Z)$ of bounded continuous functions on $Z$ with bounded support. If we drop the condition on the measures, the pointed metric spaces $(X_i,x_i)$ are said to converge to $(X_\infty,x_\infty)$ in the \emph{pointed Gromov--Hausdorff} sense, denoted $(X_i,x_i)\stackrel{pGH}{\longrightarrow}(X_\infty,x_\infty)$. 

If $f_i:X_i\to V$ are $L$-Lipschitz maps into a finite dimensional Banach space for some $L>0$, we say that $(f_i)$ converges to $f_\infty:X_\infty\to V$ in the pointed measured Gromov--Hausdorff sense, denoted $f_i\stackrel{pmGH}{\longrightarrow}f_\infty$, if $X_i\stackrel{pmGH}{\longrightarrow}X_\infty$ and the isometries $\iota_i$ above satisfy
\begin{align*}
f_i(x_i)\to f_\infty(x)\quad\mathrm{whenever}\quad x_i\in X_i,\quad \iota_i(x_i)\stackrel{i\to\infty}{\longrightarrow}\iota_\infty(x)
\end{align*}
for all $x\in X_\infty$. Using the separability of $X_\infty$ it is direct to show that if $X_i\stackrel{pmGH}{\longrightarrow}X_\infty$, then any sequence of $L$-Lipschitz functions $f_i:X_i\to \R$ (sending basepoints to zero) has a convergent subsequence.

Given $x\in X$ and $r>0$, we denote 
\begin{align*}
    T_{x,r}X=\Big(r\inv X,\frac{\mu}{c_{\mu}(x,r)},x\Big),\quad c_\mu(x,r)=\int\Big(1-\frac{d(x,y)}{r}\Big)_+\ud\mu(y).
\end{align*}
\begin{definition}\label{def:tangent}
Let $(X,\mu)$ be a metric measure space. A pointed metric measure space $Y=(Y,\nu,o)$ is a pointed measured Gromov--Hausdorff (pmGH) tangent of $X$ at $x$, if there exists a sequence $r_i\to 0$ so that $T_{x,r_i}X\stackrel{i\to\infty}{\longrightarrow}Y$ in the pmGH sense. The collection of pmGH-tangent of $X$ at $x$ is denoted $\Tan(X,\mu,x)$. 
\end{definition}
Since we only consider pmGH-tangents in this paper, we omit the subscript in the notation $\Tan(X,\mu,x)$.  Given an $L$-Lispchitz function $f:X\to \R$ and $Y\in \Tan(X,\mu,x)$ arising as the pmGH-limit of $T_{x,r_i}X$, we call $f_\infty:Y\to\R$ a \emph{blow-up} of $f$ (at $x$), if the sequence $f_i:=\frac{f-f(x)}{r_i}:T_{x,r_i}X\to \R$ pmGH-converges to $f_\infty$.

It is well-known that, if $(X_i,\mu_i,x_i)$ is a sequence where each measure is $C$-doubling, then $(X_i,\mu_i,x_i)$ pmGH-converges up to a subsequence to some limit $(Y,\nu,o)$ which is also $C$-doubling; see e.g. the discussion in \cite[Lemma 3.32.]{gigconv}. In particular, $\Tan(X,\mu,x)$ is non-empty for every $x\in X$ if $\mu$ is a doubling measure. This observation is also true for measures which vanish on porous sets, see \cite[Theorem 1.5, Proposition 5.3]{soultaniscakovic} as well as the discussions in  \cite[Remark 3.13]{sch16a} and \cite[Remark 2.23]{CKS}. We record this in the following proposition.

\begin{proposition}\label{prop:porous-tangent}
Let $(X,\mu)$ be a metric measure space where $\mu$ vanishes on porous sets. Then $\Tan(X,\mu,x)\ne\varnothing$ and every element of $\Tan(X,\mu,x)$ is doubling, for $\mu$-a.e. $x\in X$.
\end{proposition}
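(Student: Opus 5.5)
We will reduce the statement to a pointwise density condition and then invoke compactness of uniformly doubling pointed metric measure spaces. The first step is to show that for $\mu$-a.e.\ $x$ the measure is \emph{uniformly doubling at small scales around $x$}. By this we mean there exist $C<\infty$ and $r_0>0$ such that
\[
\mu(B(y,2s))\le C\,\mu(B(y,s))
\]
for all $y\in B(x,Rr)$ and all $s\in(\delta r, Rr)$, whenever $r<r_0(R,\delta)$. Here $R>1$ and $\delta>0$ are arbitrary, and the constant $C$ should not depend on $R$ or $\delta$. Granting this, the rescaled spaces $T_{x,r}X$, restricted to $B(x,R)$, are $C$-doubling down to scale $\delta$. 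By a diagonal argument in $R\to\infty$ and $\delta\to0$, combined with the Gromov compactness result for doubling spaces cited before the statement (cf.\ \cite[Lemma 3.32]{gigconv}), we obtain a subsequence $r_i\to 0$ along which $T_{x,r_i}X$ pmGH-converges. The limit measure is then $C$-doubling at every scale and every centre.

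The normalisation $c_\mu(x,r)$ deserves a check. It is comparable to $\mu(B(x,r/2))$ from below and to $\mu(B(x,r))$ from above. Hence, once doubling at $x$ is known, the normalised measures of $B(x,R)$ stay bounded between positive constants. This rules out both the zero limit and the degenerate infinite limit.

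The key step is the uniform doubling at nearby points, and this is where vanishing on porous sets enters. For $k\in\N$, we consider the set $E_k$ of points $x$ for which there are arbitrarily small scales $s$ and points $y$ with $d(x,y)\le k s$ and $\mu(B(y,2s))>k\,\mu(B(y,s))$. We would then show that the set where pointwise doubling fails badly, together with these sets, is contained up to a null set in a countable union of porous sets. The heuristic is as follows. If arbitrarily close to $x$, at comparable scale, there are balls carrying much less mass than their doubles, then iterating the doubling failure produces a ball $B(z,\eta s)$ near $x$ with $\mu$-measure tiny relative to $\mu(B(x,s))$. Using density points of a suitable compact subset $K$ and a Lebesgue differentiation argument (valid for pointwise doubling measures, or via Vitali covering for Radon measures in this setting), we would argue that $K$ misses such balls. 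This makes $K$ porous at $x$, so $\mu(K)=0$.

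For this step we would follow \cite[Theorem 1.5, Proposition 5.3]{soultaniscakovic}. The statement there shows precisely that, for measures vanishing on porous sets, a.e.\ point has all tangents doubling with a uniform constant. We would also follow \cite[Remark 3.13]{sch16a}, where the analogous argument is carried out for LDS (cf.\ \cite[Corollary 2.6]{bate2013differentiability}). The main obstacle is making the constant $C$ independent of the location $y$ in $B(x,Rr)$ and of $R$. Pointwise doubling alone only controls centres at $x$ itself. We expect to handle this through the porosity contradiction with a quantitative parameter $\eta$ depending on $k$. We then take a countable union over $k$ and over rational porosity parameters to conclude that the exceptional set is $\mu$-null.
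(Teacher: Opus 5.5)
Your reduction is sound. Suppose that for $\mu$-a.e.\ $x$ there is a constant $C$, independent of $R$ and $\delta$, with $\mu(B(y,2s))\le C\mu(B(y,s))$ for all $y\in X\cap B(x,Rr)$, $s\in(\delta r,Rr)$ and $r<r_0(R,\delta)$. Then the rescalings $T_{x,r}X$ are precompact, $c_\mu(x,r)$ is comparable to $\mu(B(x,r))$, and every limit is doubling with full support.

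The gap is that this estimate is never established, and the mechanism you sketch would not establish it.
\begin{itemize}
\item Its special case $y=x$ is already pointwise doubling at $x$, which has to be extracted from the porosity hypothesis. Your heuristic says nothing about this case, since the bad ball $B(x,s)$ contains $x\in K$.
\item Being a Lebesgue density point of $K$ only says that $K$ carries most of the mass of $B(x,s)$. It does not prevent $K$ from meeting a ball of tiny measure, so ``$K$ misses such balls'' does not follow. What forces a set to miss a bad ball is a lower bound on the mass of balls centred at its points. Moreover, the density theorem can fail for Radon measures on general separable metric spaces, so it cannot be used before some doubling is known.
\item Your sets $E_k$ tie the doubling constant to the distance ratio. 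Excluding them only gives constants growing with $R/\delta$, which does not yield a doubling limit; you flag this but leave it open.
\item If \cite{soultaniscakovic} is invoked for ``precisely'' the doubling of all tangents, your reduction is redundant, since that is essentially the statement itself. Nor can it be run backwards to get the near-$x$ estimate without the precompactness the reduction was meant to supply.
\end{itemize}
The paper does not argue by hand here; it combines \cite[Theorem 1.5]{soultaniscakovic} with \cite[Lemma 5.6]{Bate22}.

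Your route can be completed by two short porosity arguments that use no density points.

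\emph{(1) Pointwise doubling a.e.} Suppose a bounded set $A\subset\spt\mu$ of points with $\limsup_{r\to0}\mu(B(x,2r))/\mu(B(x,r))=\infty$ had $\mu(A)>0$. Choose $k_i$ with $\sum_ik_i^{-1}\mu(B(A,2))<\mu(A)/2$. Apply the $5r$-covering lemma to the balls $B(x,2r)$ with $x\in A$, $r<1/i$ and $\mu(B(x,2r))>k_i\mu(B(x,r))$. This gives disjoint balls $B(x_j,2r_j)$ such that every $x\in A$ has $d(x,x_j)<6r_j$ for some $j$. The set $H_i=\bigcup_jB(x_j,r_j)$ then has $\mu(H_i)\le k_i^{-1}\mu(B(A,2))$. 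Hence $S=A\setminus\bigcup_iH_i$ has positive measure and is $\tfrac{1}{6}$-porous at each of its points, with witnesses $x_j\to x$. This contradicts the hypothesis.

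\emph{(2) A uniform constant.} Set $G_m=\{z:\mu(B(z,2\rho))\le m\,\mu(B(z,\rho))\text{ for all }\rho<1/m\}$. If $z\in G_m$, $d(y,z)<s/2$ and $s<1/(2m)$, then $\mu(B(y,2s))\le\mu(B(z,4s))\le m^3\mu(B(z,s/2))\le m^3\mu(B(y,s))$. So every ball $B(y,s)$ with $s<1/(2m)$ and $\mu(B(y,2s))>m^3\mu(B(y,s))$ satisfies $B(y,s/2)\cap G_m=\varnothing$. If such balls occur with $d(y,x)\le Ks$ at arbitrarily small scales for some $x\in G_m$, then $G_m$ is $\tfrac{1}{2K}$-porous at $x$. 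Therefore, for $\mu$-a.e.\ $x\in G_m$, your estimate holds with $C=m^3$ for every ratio $R/\delta$, and your compactness step finishes the proof.

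This yields a self-contained alternative to the paper's citations. It even gives a doubling constant for all tangents at $x$ that depends only on $m$.
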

\begin{proof}
    This follows from \cite[Theorem 1.5]{soultaniscakovic} and \cite[Lemma 5.6]{Bate22}.
\end{proof}

\section{Fragment-wise differentiation}\label{sec:fw}

Recall that $X$ is separable and let $f:X\to V$ be Lipschitz. We will be showing the existence of a minimal $\ast$-upper gradient for $f$. This will be done via the notion of a norming set.

If $A\subset V$, we say that $S\subset \overline{B_{V^*}}$ is a norming set for $A$ if for all $a\in A$ there exists $s\in S$ s.t. $\|a\|=|\langle s,a\rangle|$. Here, $\overline{B_{V^*}}$ is the closed unit ball in $V^*$, and we denote the dual pairing by $\langle s,a\rangle:=s(a)$. When $S$ is countable, it is also refered to as a norming sequence for $A$.

Our proofs will make use of the existence of a countable norming set of functionals for a dense subset of the difference set 
    $$f(X)-f(X):=\{ f(x)-f(y):x,y\in X\}.$$
    Indeed, by the continuity of $f$ and the separability of $X$ we have that there exists a countable collection $\{ v_i\}_{i\in\N}\subset V$ dense in $f(X)-f(X)$. 
    By Hahn-Banach Theorem we can then consider a countable norming set of functionals $\{ v_i^*\}_{i\in\N}\subset \overline{B_{V^*}}$, for which $|\langle v_i^*,v_i\rangle|=\Vert v_i\Vert$ for all $i\in\N$. 
\begin{lemma}\label{lem:ugscalarized}
    Let $V$ a Banach space and $f\in \LIP (X,V)$ and $f:X\rightarrow V$. Then there exists a minimal $*$-upper gradient of $f$, denoted by $|Df|_*$. Moreover, for any countable $Z\subset \overline{B_{V^*}}$ that is a norming set for a dense subset of $f(X)-f(X)$ we have
    \[
    |Df|_*=\sup_{v^*\in Z}\{ |D\langle v^*,f\rangle |_*\}.
    \]

    Moreover, if $f,g:X\to V$ are Lipschitz and $\lambda\in \R$, then the following two properties hold a.e.:
    \begin{enumerate}
        \item Homogeneity:
        \[
        |D(\lambda f)|_* = |\lambda| |Df|_*.
        \]
        \item Subadditivity: 
        \[
        |D(f+g)|_* \leq |D(f)|_* + |D(g)|_*.
        \]
    \end{enumerate}
\end{lemma}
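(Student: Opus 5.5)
The plan is to define the candidate $\rho:=\sup_{v^*\in Z}|D\langle v^*,f\rangle|_*$, using the minimal $*$-upper gradients of the scalar Lipschitz functions $\langle v^*,f\rangle$ given by \cite[Proposition 2.10]{tsd24}. We then show two things: $\rho$ is a (weak) $*$-upper gradient of $f$, and $\rho\le\tilde\rho$ a.e. for every $*$-upper gradient $\tilde\rho$ of $f$. Together these give existence of a minimal one equal to $\rho$, and show that $\rho$ does not depend on $Z$ up to null sets. Note that $\rho$ is Borel as a countable supremum, and $\rho\le \LIP(f)$ since each $\langle v^*,f\rangle$ is $\LIP(f)$-Lipschitz.

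\emph{Lower bound.} Let $\tilde\rho$ be a $*$-upper gradient of $f$ and $v^*\in Z$. Since $\|v^*\|\le 1$, we have $\mathrm{osc}_\gamma\langle v^*,f\rangle\le \mathrm{osc}_\gamma f$ and $\LIP\langle v^*,f\rangle\le \LIP(f)$. This alone does not show that $\tilde\rho$ is a $*$-upper gradient of $\langle v^*,f\rangle$, because the gap term carries the constant $\LIP(f)$ rather than $\LIP\langle v^*,f\rangle$. To handle this I would invoke the localized characterization from \cite{tsd24}: for scalar $g$, $|Dg|_*$ coincides a.e. with the minimal function $\rho_g$ such that $|(g\circ\gamma)'_t|\le \rho_g(\gamma_t)|\gamma'_t|$ for a.e. $t$ along $\Mod_\infty$-a.e. fragment. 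The same argument applies to $f$ itself: applying \eqref{eq:*ug} on subfragments $\gamma|_{\dom(\gamma)\cap[t,t+h]}$ at density points, where the gap contribution is $o(h)$, yields $\|(f\circ\gamma)'_t\|\le\tilde\rho(\gamma_t)|\gamma'_t|$. Hence $|(\langle v^*,f\rangle\circ\gamma)'_t|\le\tilde\rho(\gamma_t)|\gamma'_t|$, so $|D\langle v^*,f\rangle|_*\le\tilde\rho$ a.e., and therefore $\rho\le\tilde\rho$.

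\emph{Upper bound.} Fix a fragment $\gamma$ outside a $\Mod_\infty$-null family, on which all the countably many scalar inequalities $|(\langle v^*,f\rangle\circ\gamma)'_t|\le|D\langle v^*,f\rangle|_*(\gamma_t)|\gamma'_t|$ hold. We need a pointwise bound for $\|f(\gamma_s)-f(\gamma_t)\|$ with $s,t\in\dom(\gamma)$. Approximate $f(\gamma_s)-f(\gamma_t)$ by some $v_i$ in the dense set, and pick $v_i^*\in Z$ norming $v_i$. Then
\[
\|f(\gamma_s)-f(\gamma_t)\|\le |\langle v_i^*,f(\gamma_s)-f(\gamma_t)\rangle|+2\epsilon .
\]
Apply the scalar $*$-upper gradient inequality to $\langle v_i^*,f\rangle$ on the subfragment $\gamma|_{\dom(\gamma)\cap[t,s]}$. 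This bounds the right-hand side by $\int_{\gamma|_{[t,s]}}\rho\,ds+\LIP(f)\gap(\gamma|_{[t,s]})+2\epsilon$. Letting $\epsilon\to0$ and taking the supremum over $s,t$ gives \eqref{eq:*ug} for $\rho$, since the integral and gap terms are monotone in the subinterval. A Borel representative of this weak upper gradient then gives a genuine $*$-upper gradient, by the remark following the definition.

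\emph{Remaining properties.} Homogeneity follows because $Z$ norms a dense subset of $\lambda f(X)-\lambda f(X)$ as well, and the scalar minimal gradient is homogeneous. For subadditivity, take $Z$ countable and norming simultaneously for dense subsets of the difference sets of $f$, $g$ and $f+g$; such a $Z$ exists as a union of countable sets. Then use scalar subadditivity and $\sup(a+b)\le\sup a+\sup b$. Independence of the formula from $Z$ is what makes these choices legitimate.

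The main obstacle is the lower bound, specifically the mismatch of the gap constant. I would resolve it through the derivative characterization along fragments, making sure that the approach from \cite{tsd24} carries over verbatim.
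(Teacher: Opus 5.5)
Your proposal is correct and follows the paper's proof. It uses the same candidate $\sup_{v^*\in Z}|D\langle v^*,f\rangle|_*$ and the same norming-set estimate to show this is a $*$-upper gradient of $f$, and it proves minimality, as the paper does, by comparing an arbitrary $*$-upper gradient of $f$ with those of the scalar components. The one difference is that you justify that comparison through the curvewise derivative bound from \cite{tsd24}. This correctly handles the mismatch between $\LIP(f)$ and $\LIP\langle v^*,f\rangle$ in the gap term, which the paper passes over when it asserts that any $*$-upper gradient of $f$ is one for $\langle v_i^*,f\rangle$. The same care is needed for scalar subadditivity, where $\LIP(f)+\LIP(g)$ can exceed $\LIP(f+g)$.
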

    We point out that the above lemma will be used several times for norming sequences of a dense subspaces containing $f(X)$. In this case, the linear structure of the subspace also yields the norming property for the difference set.
\begin{proof}
   Let $Z=\{ v_i^*\}_{i\in\N}\subset B_{V^*}$ be a norming sequence for a collection $\{ v_i\}_{i\in\N}$ dense in $f(X)-f(X)$. The existence of such a set $Z$ was explained before the proof.  Then for each $x,y\in X$ there is a subsequence $\{ v_{i_j}\}_{j\in\N}$ that converges to $f(x)-f(y)$ and
    \begin{eqnarray}
\Vert f(x)-f(y)\Vert &=&\lim_{j\to\infty }\Vert v_{i_j}\Vert =\lim_{j\to\infty }\vert\langle v_{i_j}^*,v_{i_j}\rangle\vert \nonumber \\
&\leq & \limsup_{j\to\infty }\left(\left\vert \langle v_{i_j}^*,v_{i_j}+f(y)-f(x)\rangle\right\vert +\left\vert \langle v_{i_j}^*,f(x)-f(y)\rangle\right\vert\right)\nonumber \\
&\leq &\limsup_{j\to\infty }\left( \Vert v_{i_j}-f(x)+f(y)\Vert+\left\vert \langle v_{i_j}^*,f(x)-f(y)\rangle\right\vert\right)\nonumber \\
&=&\limsup_{j\to\infty}\left\vert \langle v_{i_j}^*,f(x)-f(y)\rangle\right\vert  \label{eq:hahnbanach1}.
\end{eqnarray}
    On the other hand $\langle v_i^*,f\rangle :X\rightarrow \R$ is Lipschitz for all $i\in\N$ and thus by \cite[Proposition 2.10]{tsd24} each $\langle v_i^*,f\rangle$ has a minimal $*$-upper gradient $|D\langle v_i^*,f\rangle|_*$. Let
    \begin{equation}\label{eq:supviug}
        \rho^*:=\sup_{i\in\N} |D\langle v_i^*,f\rangle|_*.
    \end{equation}
    Notice that $\Lip f$ is a $*$-upper gradient for each $\langle v_i^*,f\rangle$, so $\rho^*\in L^\infty (X)$. Now for a curve fragment $\gamma\in \Fr (X)$ by \eqref{eq:hahnbanach1} and the definition of $|D\langle v_i^*,f\rangle|_*$
     we have for each $x,y\in \mathrm{Im} (\gamma )$
     $$ \Vert f(x)-f(y)\Vert \leq \limsup_{j\to\infty}\left\vert \langle v_{i_j}^*,f(x)-f(y)\rangle\right\vert \leq \int_\gamma \rho^* \, ds +\gap (\gamma )\LIP (f).$$
     This concludes that $\rho^*$ is a $*$-upper gradient of $f$. Now suppose that $\rho\in L^\infty (X)$ is another $*$-upper gradient of $f$. In particular it is a $*$-upper gradient of $\langle v_i^*,f\rangle$ for all $i\in\N$. This yields $|D\langle v_i^*,f\rangle|_*\leq \rho$ a.e. for all $i\in\N$ and thus $\rho^*\leq \rho$ a.e. We thus have $|Df|_*=\rho^*$.

     From Formula \eqref{eq:supviug} for the minimal $\ast$-upper gradient, we get subadditivity and homogeneity by same properties for weak $\ast$-upper gradients for real-valued functions, which follows directly from the definition. Indeed, it is easy to note that $|\lambda||Df|_\ast$ is an upper gradient for $\lambda f$ for every $f:X\to \R$, and $|Df|_\ast + |Dg|_\ast$ is an upper gradient for $f+g$. From these the claims follow by uniqueness of $\ast$-upper gradients.

     \end{proof}

\subsection{Proof of Theorem \ref{thm:fragmentwise}.}
Let $(U,\varphi )$ be a fragment-wise chart of dimension $n$ in $X$ and $f:X\rightarrow V$ Lipschitz, where $V$ is a Banach space satisfying the Radon-Nikodým property. We first recall that for each $\gamma\in \Fr (X)$ we have the existence of 
$$ (f\circ\gamma)_t'=\lim_{{\rm dom}(\gamma)\ni t'\to t}\frac{f(\gamma_{t'})-f(\gamma_t)}{t'-t}\quad \textrm{a.e. }t\in \dom(\gamma).$$ 
Moreover, $f$ is Lipschitz, thus measurable and continuous, which implies that the set $W=\overline{\rm span}\{ f(U) \}\subset V$ is separable. By Hahn-Banach's theorem there exist $v_i^*\in V^*$, $i\in\N$ with $\Vert v_i^*\Vert\leq 1$ so that
\begin{equation}\label{eq:hahnbanach}\Vert v\Vert =\sup_{i\in\N}\langle v_i^*,v\rangle\quad\text{for all } v\in W.
\end{equation}
By the fragment-wise differentiability of scalar valued maps \cite[Theorem 1.2]{tsd24}, for each $v^*\in V^*$ there exists a unique Borel map $L_{(\cdot )}[v^*]:U\rightarrow (\R^n)^*$, given as $L_x[v^*]=d_x\langle v^*, f\rangle$, which satisfies
\begin{equation}\label{eq:fwdiffcomposedwithfunctional}
	L_{\gamma_t}[v^*]((\varphi \circ\gamma)^\prime_t )=(\langle v^*,f\rangle \circ\gamma )^\prime_t=\langle v^*,(f\circ\gamma )^\prime_t\rangle .
\end{equation}
at a.e. $t\in \gamma^{-1}(U)$ for $\Mod_\infty$-a.e. $\gamma\in \Fr (X)$. Let $Z:=\overline{\rm span}\{ v_i^*:i\in\N\}\subset V^*$ and $Z_0$ a countable vector space over $\Q$ dense in $Z$. For $x\in U$, such that it exists, consider $L(x):Z_0\rightarrow (\R^n)^*$ defined as $v^*\mapsto L_x[v^*]$. We first prove that $L(x)$ is linear and bounded for almost every $x\in U$.

\textit{Linearity.} Let $v^*_{1},v^*_{2}\in Z_0$, $a_1,a_2\in\Q$, $\gamma\in \Fr(X)$ and $t\in \gamma^{-1}(U)$ such that \eqref{eq:fwdiffcomposedwithfunctional} holds for $v^*_{1}$ and $v^*_{2}$. Then
\begin{align*}
	\langle a_1v^*_{1}+a_2v^*_{2}, (f\circ\gamma )_t^\prime\rangle &= 
	a_1\langle v^*_{1}, (f\circ\gamma )_t^\prime\rangle +
	a_2\langle v^*_{2}, (f\circ\gamma )_t^\prime\rangle 
	\\
	&=(a_1L_{\gamma_t}(v^*_{1})+ L_{\gamma_t}(v^*_{2}))((\varphi\circ\gamma )_t^\prime ),
\end{align*}
and since $x\mapsto L_x[v^*]$ is unique for each $v^*\in V^*$ then $L_{\gamma_t}(a_1v^*_{1}+a_2v^*_{2})=a_1L_{\gamma_t}(v^*_1)+ L_{\gamma_t}(v^*_2)$, naturally extending to arbitrary linear combinations and thus proving linearity in $Z_0$.

\textit{Boundedness.} For a.e. $x\in U$, we can equip $(\R^n)^\ast$ with the norm $|\cdot|_x$ given by \cite[Theorem 1.2.]{tsd24}, for which we have
\[
|L_x(v^\ast)|_x=|d_x\langle v^*, f\rangle|_x = |D\langle v^*, f\rangle|_\ast(x)\leq \LIP[\langle v^*, f\rangle]\leq \|v^\ast\|\LIP[f].
\]

Since $L(x)$ is a bounded linear operator $L(x):Z_0\rightarrow ((\R^n )^*,|\cdot|_x )$ we can then consider its extension $Z\rightarrow ((\R^n )^*,|\cdot|_x)$ and the corresponding adjoint operator $d_xf:(\R^n,\vert\cdot\vert )\rightarrow Z^*\subset V^{**}$, i.e., it satisfies $\langle d_xf(v),v^*\rangle =L_x[v^*](v)$ for all $v\in \R^n$. Here $|\cdot |$ denotes the dual norm of $|\cdot|_x$. In particular, when $v=(\varphi\circ\gamma)^\prime_t$ for some $\gamma\in\Fr (X)$ satisfying \eqref{eq:fwdiffcomposedwithfunctional} for all $v^*\in Z_0$ (still $\Mod_\infty$-a.e. curve fragment, since $Z_0$ is countable), the continuity of the extension of $L(x)$ yields then for every $v^*\in Z$
\begin{equation}\label{eq:adjoint-fwdiff}
\langle d_{\gamma_t}f((\varphi\circ\gamma )_t^\prime ),v^*\rangle =L(\gamma_t)[v^*]((\varphi\circ\gamma )_t^\prime )=\langle v^*, (f\circ \gamma)_t^\prime  \rangle ,\quad \mbox{a.e. } t\in \gamma^{-1}(U).
\end{equation}
Notice that we considered $x\in U\setminus N$, where $\mu (N)=0$, in order to infer boundedness of $v^*\mapsto L_x[v^*]$, but this is not restrictive to obtain \eqref{eq:adjoint-fwdiff} for a.e. $t\in\gamma^{-1}(U)$ due to \cite[Lemma 2.7]{tsd24}, which implies $\Gamma_N^*$ has null $\infty$-modulus.

By the definition of $W$ we have that $(f\circ\gamma )^\prime_t\in W$, which is normed by  $B_Z$ due to \eqref{eq:hahnbanach} and the definition of $Z$, so we can  identify $(f\circ\gamma )^\prime_t\in W\hookrightarrow Z^*$. On the other hand, we also have $d_xf$ maps into $Z^*$, and then, since $B_{Z_0}$ is norming for $Z^*$ due to the density of $Z_0$ in $Z$, 
\begin{align*}\Vert d_xf((\varphi\circ\gamma )^\prime_t)-(f\circ\gamma )^\prime_t\Vert =\sup_{v^*\in B_{Z_0}}| \langle d_xf((\varphi\circ\gamma )^\prime_t)-(f\circ\gamma )^\prime_t,v^*\rangle | =0
\end{align*}
for a.e. $t\in\gamma^{-1}(U)$. It follows that for vectors of the form $v=(\varphi\circ\gamma)^\prime_t$ we have $d_xf(v)=(f\circ\gamma)^\prime_t\in W$. 
Moreover, since $W$ is closed and for a.e. $x\in U$, $\{ (\varphi\circ\gamma)^\prime_t/|(\varphi\circ\gamma)^\prime_t|:\gamma\in \Fr (X), \gamma_t=x,\ \gamma\notin \Gamma_0\}$ is dense in $S^{n-1}$ for any $\Mod_\infty$-null family $\Gamma_0$ (cf. \cite[Theorem 6.4]{tsd24}) we have that $\operatorname{Im}(\ud_xf)\subset W\subset V$. \qed

\subsection{Weak$^*$ fragment-wise differentials}\label{sec:w-star}
In this section we will address an analogue of Theorem \ref{thm:fragmentwise} for maps $f:X\rightarrow V$ where $V$ may not have the RNP, but is dual to a separable Banach space $Y$, i.e., $Y$ is separable and $V=Y^*$. Recall that, in this case, Lipschitz mappings $g:I\subset\R \to V$ are weak$^*$ differentiable a.e. (see \cite[Theorem 3.5]{amb-kir00}). We recall here the definition of weak$^*$ differentiability.
\begin{definition}\label{def:w-star}
    Let $I\subset \R$, $V=Y^*$ a Banach space and $g:I\rightarrow V$. We say $g$ is weak$^*$ differentiable at $t\in I$ if there exists a linear map $w\ud_tg:I\rightarrow V$ such that
    \[
    \lim_{s\to t}\left\langle y, \frac{ g(s)-g(t)-w\ud_tg(s-t) }{|s-t|}\right\rangle
     =0\quad\mbox{ for all }y\in Y.
     \]
     In particular, there exists a vector $g_t^{(*)}\in V$ such that
     \[
     \langle y,g\rangle_t^\prime =\langle y, g^{(*)}_t\rangle\mbox{ for all }y\in Y.
     \]
\end{definition}
 In our setting, for a Lipschitz map $f:X\rightarrow V$ and $\gamma\in \Fr (X)$, for each $y\in Y$ we can differentiate tha map $\langle f,y\rangle\circ\gamma  :I\rightarrow \R $ a.e., and since $f\circ\gamma$ is Lipschitz, the existence of a weak$^*$ derivative a.e.  yields
 \begin{equation}\label{eq:w-starderivative}
     (\langle y,f\rangle\circ\gamma )_t^\prime =( \langle y,(f\circ\gamma ) \rangle)_t^\prime  =\langle y,(f\circ\gamma )_t^{(*)} \rangle \quad \mbox{ for a.e. }t\in\dom (\gamma ).
 \end{equation}
 Notice that, although the null set in $\dom (\gamma )$ is independent of $y$ for the weak$^*$ differential, this is not the case for the differential of $\langle y,f\rangle\circ \gamma $.
 We now present an analogue of Theorem \ref{thm:fragmentwise} using weak$^*$ differentiation, which also serves as a definition of fragment-wise weak$^*$ differentials.
\begin{proposition}\label{prop:w-star}
    Let $(U,\varphi)$ be an $n$-dimensional fragment-wise chart in a metric measure space $(X,\mu)$, and let $V$ be dual to a separable Banach space $Y$. Then for any $f\in\LIP(X,V)$ and $\mu$-a.e. $x\in U$, there exists a unique $w\ud_xf\in \leb(\R^n,V)$ such that 
\begin{align*}
(f\circ\gamma)^{(*)}_t= w\ud_{\gamma_t}f((\varphi\circ\gamma)_t')\quad\textrm{a.e. }t\in \gamma^{-1} (U)
\end{align*}
for $\Mod_\infty$-a.e. $\gamma\in\Fr(X)$. We call $w\ud_xf$ the fragment-wise weak$^*$ differential of $f$ at $x$.
\end{proposition}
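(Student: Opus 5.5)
The plan is to mimic the proof of Theorem \ref{thm:fragmentwise}, with the predual $Y$ playing the role of the norming functionals. The separability of $Y$ gives a countable norming family for free, and $Y$ separates points of $V=Y^*$. Fix a countable $\Q$-vector space $Y_0\subset Y$ dense in $Y$. For each $y\in Y_0$, the map $\langle y,f\rangle:X\to\R$ is Lipschitz with constant at most $\|y\|\LIP(f)$. By \cite[Theorem 1.2]{tsd24} it has a unique fragment-wise differential $L_x[y]:=\ud_x\langle y,f\rangle\in(\R^n)^*$ with
\[
L_{\gamma_t}[y]((\varphi\circ\gamma)'_t)=(\langle y,f\rangle\circ\gamma)'_t=\langle y,(f\circ\gamma)^{(*)}_t\rangle \quad\text{a.e. } t\in\gamma\inv(U)
\]
for $\Mod_\infty$-a.e. $\gamma$. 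Here the second equality is \eqref{eq:w-starderivative}. Since $Y_0$ is countable, there is a single $\Mod_\infty$-null exceptional family, and by \cite[Lemma 2.7]{tsd24} a single $\mu$-null exceptional set, outside of which this holds for every $y\in Y_0$.

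Next I show that $y\mapsto L_x[y]$ is $\Q$-linear and bounded on $Y_0$ for $\mu$-a.e. $x\in U$. Linearity follows exactly as in the proof of Theorem \ref{thm:fragmentwise}: $a_1L[y_1]+a_2L[y_2]$ is a fragment-wise differential of $\langle a_1y_1+a_2y_2,f\rangle$, so uniqueness identifies it with $L[a_1y_1+a_2y_2]$. This needs only countably many identities, hence only one null set. For boundedness I use the pointwise norm $|\cdot|_x$ from \cite[Theorem 1.2]{tsd24}, which gives $|L_x[y]|_x=|D\langle y,f\rangle|_*(x)\le \|y\|\LIP(f)$. Therefore $L_x$ extends to a bounded linear map $Y\to((\R^n)^*,|\cdot|_x)$. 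Its adjoint defines $w\ud_xf:\R^n\to Y^*=V$ by
\[
\langle y, w\ud_xf(v)\rangle=L_x[y](v).
\]
This is simpler than the RNP case, because the adjoint lands directly in $V=Y^*$ and no bidual is involved. Borel measurability in $x$ follows from the measurability of each $x\mapsto L_x[y]$.

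It remains to check the defining identity and uniqueness. For good $\gamma$ and a.e. $t$, we have $\langle y,w\ud_{\gamma_t}f((\varphi\circ\gamma)'_t)\rangle=\langle y,(f\circ\gamma)^{(*)}_t\rangle$ for all $y\in Y_0$. By density of $Y_0$ and boundedness of both sides, this holds for all $y\in Y$. Since $Y$ separates points of $Y^*$, the two vectors coincide.

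For uniqueness, suppose $A_x$ is another such map. Then for each $y\in Y_0$, the functional $\langle y,A_x(\cdot)\rangle$ is a fragment-wise differential of $\langle y,f\rangle$. Scalar uniqueness gives $\langle y,A_x\rangle=\langle y,w\ud_xf\rangle$ a.e., and countability of $Y_0$ together with separation of points yields $A_x=w\ud_xf$ a.e. The main point requiring care is the bookkeeping of exceptional sets. The weak$^*$ derivative's null set is independent of $y$, but the scalar differentials' null sets are not, so everything must be done on the countable $Y_0$ before extending by density. I expect this to be routine given \cite[Lemma 2.7]{tsd24}.
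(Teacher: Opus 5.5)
Your proposal is correct and follows essentially the same route as the paper: scalarize through a countable dense $\Q$-subspace of the predual $Y$, use \cite[Theorem 1.2]{tsd24} to get linearity and the $|\cdot|_x$-bound, extend to all of $Y$, take the adjoint into $Y^*=V$, and pass from the dense subspace to all $y\in Y$ by density together with \cite[Lemma 2.7]{tsd24}. Your explicit uniqueness argument, which pairs a competitor with each $y\in Y_0$ and invokes scalar uniqueness, is a useful addition, since the paper's proof leaves that part implicit.
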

\begin{proof}
	Let $D\subset Y$ a countable dense vector space over $\Q$. By \eqref{eq:w-starderivative} and the fragment-wise differentiability of real-valued Lipschitz functions \cite[Theorem 1.2]{tsd24} we have that for each $y\in D$ there exists  $L_{(\cdot )}[y]\in (\R^n )^*$ such that
    \begin{equation}\label{eq:w-starfw}
    L_{\gamma_t}[y]((\varphi\circ\gamma )^\prime_t)=(\langle y,f\rangle \circ\gamma )^\prime_t =\langle y,(f\circ\gamma )^{(*)}\rangle 
    \end{equation}
    for $\Mod_\infty$-a.e. $\gamma\in \Fr (X)$ and a.e. $t\in \gamma^{-1}(U)$. 
    Similar arguments as in the proof of Theorem \ref{thm:fragmentwise} yield that for a.e. $x\in U$ the map $y\mapsto L_x[y]$ is linear and bounded with respect to $|\cdot|_x$. Indeed, these follow identically using $y\in D$ as functionals instead of $v^*$. Now, for a.e. $x\in U$ we can extend $L_x$ to a map $L_x:Y\to (\R^n)^*$ that is linear and bounded.

Now, for each $x\in U$ such that $y\mapsto L_x[y]$ is linear and bounded with respect to $|\cdot|_x$, consider $w\ud_xf:(\R^n,|\cdot |)\rightarrow V$, where $|\cdot|$ is the dual norm of $|\cdot|_x$, the adjoint operator of the extension $Y\ni y\mapsto L_x[y]$. Then, we obtain
     \[ 
     \langle y, w\ud_{\gamma_t}f((\varphi\circ\gamma )^\prime_t)= L_{\gamma_t}[y]((\varphi\circ\gamma )^\prime_t) =\langle y,(f\circ\gamma )^{(*)}_t\rangle 
     \]
     for $\Mod_\infty \mbox{-a.e. } \gamma\in \Fr (X), \mbox{a.e. }t\in\gamma^{-1 }(U)$. This holds first for $y\in D$, but density and boundedness imply that holds for all $y\in Y$. Notice that \cite[Lemma 2.7]{tsd24} is applied here so that the above holds for a.e. $t\in \gamma^{-1}(U)$, as the existence of the adjoint operator only holds for a.e. $x\in U$.
     Taking supremum over all $y\in B_Y$ we then get that $df:x\mapsto w\ud_xf$ is a fragment-wise differential of $f$.
\end{proof}

\subsection{Proof of Theorem \ref{thm:RNP-LDS-char}.}
\noindent$(i)\Rightarrow (ii)\quad$ Fix $(U,\varphi )$ a RNP-Cheeger chart and let $f\in \LIP (X,V)$, which by hypothesis admits a differential $d_xf:\R^n\rightarrow V$ satisfying
$$\Lip f(x)=\limsup_{y\to x}\frac{\Vert \ud_xf(\varphi (y)-\varphi (x))\|}{d(x,y)}$$
for a.e. $x\in U$. By \cite[Lemma 2.1]{bate2013differentiability} we have that
$$\Vert \xi \Vert^*_x:=\Lip (\xi \circ \varphi )(x)$$
is a norm in $(\R^n)^*$ for a.e. $x\in U$ and thus we consider $\Vert\cdot\Vert_x$ to be its dual norm. Consider then
\begin{align*}
	\Vert \ud_xf\Vert_{\rm op}&:=\sup \{ \Vert \ud_xf(v)\Vert:\Vert v\Vert_x\leq 1,
\end{align*}
and so we have $\Lip f(x)\leq \Vert d_xf\Vert_{\rm op}$. Thus, in order to infer $(ii)$ we may now prove that $\Vert \ud_xf\Vert_{\rm op}=|Df|_*(x)$. This holds true for real-valued Lipschitz functions due to \cite[Theorem 1.2]{tsd24}. Thus, for a countable family $\{ v_i^*\}_{i\in\N}$ we have simultaneously  that for almost every $x\in U$ $\Vert d_x\langle v_i^*,f\rangle \Vert_{\rm op}=|D\langle v_i^*,f\rangle |_*(x)$ for all $i\in\N$. By choosing this countable collection as in \eqref{eq:hahnbanach}, i.e. norming $W=\overline{\mathrm{span}}\{ f(U)\}$, we can apply Lemma \ref{lem:ugscalarized} to get
$$\sup_{i\in\N}\Vert d_x\langle v_i^*,f\rangle \Vert_{\rm op}=\sup_{i\in\N}|D\langle v_i^*,f\rangle |_*(x)=|Df|_*(x).$$
It is clear that $\displaystyle \sup_{i\in\N}\Vert \ud_x\langle v_i^*,f\rangle \Vert_{\rm op}\leq \Vert \ud_xf\Vert_{\rm op}$. For the reverse inequality consider $v\in\R^n$ such that $\Vert v\Vert_x\leq 1$, and recall from the proof of Theorem \ref{thm:fragmentwise} that $\ud_xf(v)\in W$ is normed by $\{ v_i^*\}_{i\in\N}$, thus
$$
\Vert \ud_xf(v)\Vert =\sup_{i\in\N}|\langle v_i^*,\ud_xf(v)\rangle|= \sup_{i\in\N}|\ud_x\langle v_i^*,f\rangle (v)|\leq \sup_{ i\in\N}\Vert \ud_x\langle v_i^*,f\rangle \Vert_{\rm op}.
$$
Then taking supremum over all such $v$ we have 
$$\Vert \ud_xf\Vert_{\rm op}\leq \displaystyle\sup_{ i\in\N}\Vert \ud_x\langle v_i^*,f\rangle \Vert_{\rm op}=|Df|_*(x)$$ for $\mu$-a.e. $x$.

\noindent$(ii)\Rightarrow (iii)\quad$ This is trivial as one can choose $\omega(t)=t$.

\noindent$(iii)\Rightarrow (i)\quad$ Since $(iii)$ holds, in particular, for scalar valued Lipschitz functions, it follows from \cite[Theorem 1.5]{tsd24} that $X$ is a Lipschitz differentiability space. Consider then $(U,\varphi )$ a (real-valued) Cheeger chart and let $f\in \LIP (X;V)$ for a Banach space $V$ with the RNP. We recall that there is a separable subspace $W\subset V$ with the property that $\ud_xf(\R^n)\subset W$ $\mu$-a.e. $x\in U$ for every $f\in \LIP(X,V)$.  

Consider the Lipschitz map $g:=(f,\varphi):X\to V\times \R^n$. Consider the subspace $S\subset \mathcal B(V\times \R^n,V)$ spanned by linear maps $\Phi_L(v,z)=v+L(z):V\times \R^n\to V$ where $L$ ranges over the (separable) space $\mathcal B(\R^n,W)$. Notice  that for such $L$ we have $\Phi_L\circ g=f+L\circ\varphi :X\rightarrow V$ is Lipschitz, so by (iii) there exists a $\mu$-null set $N_{L}\subset X$ so that 
\begin{equation}\label{eq:ugphil}
\Lip (\Phi_L\circ g)(x)\le \omega(|D \Phi_L \circ g|_*(x))
\end{equation}
whenever $x\notin N_L$. Notice also that for $\mu$-a.e. $x\in X$ the map $\lambda \mapsto \omega ( |D \lambda \circ g|_*)$ is uniformly continuous on dense subsets of $S$ due to the subadditivity properties in Lemma \ref{lem:ugscalarized}. It is also easy to check that $\lambda\mapsto \Lip (\lambda\circ g)(x) $ is a seminorm on $S$.

On the other hand, by the boundedness of $L$ and the definition of fragment-wise differential we have
\begin{equation}\label{eq:ugphil1}
(\Phi_L \circ g\circ \gamma)'(t)=(f\circ\gamma )'(t)+L(\varphi\circ\gamma )'(t)=(\ud_xf+L)[(\varphi \circ \gamma)'(t)]
\end{equation}
for $\Mod_\infty$-a.e. curve fragment and a.e. $t \in {\rm dom}(\gamma)$. Thus, $\LIP[\varphi]\|\ud_xf+L\|$ is a $\ast$-upper gradient for $\Phi_L\circ g$, where $\|\cdot\|$ is the $\ell_2$ dual norm on $\R^n$. We get with \eqref{eq:ugphil}, the minimality of $|D(\Phi_L\circ g )|_*$ and the non-decreasing property of $\omega$
\begin{equation}\label{eq:ugphil2}
\Lip (\Phi_L\circ g)(x) \leq \omega(|D \Phi_L \circ g|_*(x))\leq \omega(\LIP[\varphi]\|\ud_xf+L\|)
\end{equation}
outside possibly another null set still labeled $N_L$. Now let $D\subset\mathcal B (\R^n,W)$ be a dense subset (thus $\{ \Phi_L:L\in D\}$ is dense in $S$). The right hand side of \eqref{eq:ugphil} restricted to $\Phi_L\in D$ are uniformly continuous outside a null set $B$ and let $N=\bigcup_{L\in D}N_L \cup B$. Then for every $x\in X\setminus N$ and every $L\in D$ we have \eqref{eq:ugphil2}; moreover, both sides in \eqref{eq:ugphil} are uniformly continuous over $D$ (thus continuous) and the identity \eqref{eq:ugphil1} is stable under limits of $L\in D$, allowing us to obtain \eqref{eq:ugphil2} for all $L\in \mathcal B(\R^n,W)$ by density.  Take 
 $L=-\ud_x f$ to get
 $\Lip(f-\ud_xf\circ\varphi)(x)=\Lip(\Phi_{\ud_x f}\circ g)=0$ $\mu$-a.e. $x$ and complete the proof.

\begin{remark}\label{rmk:ugscalarized}
In the spirit of \cite[Proposition 4.5]{tsd24}, one can obtain a representation of the $*$-upper gradient through a seminorm for separable classes of operators. This is similar to the proof of $(iii)\Rightarrow(i)$ of Theorem \ref{thm:RNP-LDS-char}. Our argument however avoids an abstract representation and thus we don't introduce a formal statement as in \cite{tsd24}.
\end{remark}

\section{Decomposability bundle}\label{sec:decomp-bundle}

Let $Y$ be a Banach space and $\mu$ a Radon measure on $Y$. A map $x\mapsto V(x)$, $Y\to \operatorname{Gr}(Y)$, is Borel if the set $\{(x,v) : x\in Y, v\in V(x)\}$ is Borel. Here $\operatorname{Gr}(Y)$ is the collection of all closed subspaces of $Y$. We restate Definition \ref{def:decomp-bundle} of a decomposability bundle with a bit more detail. A map $x\mapsto V(x)$ is called a candidate for the decomposability bundle if the following holds:
\begin{align}\label{eq:firstcond}
    \text{There exists a} &\text{  $\mu$-null set $N\subset X$ so that $\gamma'(t)\in V(\gamma_t)$} \\
    &\text{ for a.e. $t\in \dom(\gamma)$ for all $\gamma\in\Fr(Y)$ with $|\gamma\inv(N)|=0$.} \nonumber
\end{align}
The decomposability bundle $x\mapsto T_\mu(x)$ of $\mu$ is a Borel map $Y\to \operatorname{Gr}(Y)$ that satisfies \eqref{eq:firstcond} and if $x\mapsto V(x)$ is another map satisfying \eqref{eq:firstcond}, then $T_\mu(x)\subset V(x)$ $\mu$-a.e.  $x$. We will often refer to this as minimality.

The following properties can be obtained by standard arguments.

\begin{lemma}\label{lem:basic-prop-of-decomp-bundle}
The decomposability bundle of $\mu$ (if it exists) is $\mu$-a.e. unique: If $x\mapsto T_\mu(x)$ and $x\mapsto \widetilde T_\mu(x)$  are decomposability bundles as above, then $T_\mu=\widetilde T_\mu$ $\mu$-a.e. on $Y$. Moreover, if $\tilde\mu\ll\mu$ then 
\begin{align*}
    T_{\tilde\mu}(x)=T_\mu(x)\quad\tilde\mu\mbox{-a.e. }\; x.
\end{align*}
\end{lemma}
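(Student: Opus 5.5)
Uniqueness follows from minimality applied in both directions. If $T_\mu$ and $\widetilde T_\mu$ are both decomposability bundles, each satisfies \eqref{eq:firstcond}. Since $\widetilde T_\mu$ is a candidate, minimality of $T_\mu$ gives $T_\mu(x)\subset\widetilde T_\mu(x)$ for $\mu$-a.e. $x$. Exchanging the roles gives the reverse inclusion off another $\mu$-null set, so the two bundles agree $\mu$-a.e.

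For the statement about $\tilde\mu\ll\mu$, I show the two inclusions separately. Throughout I assume, as the statement implicitly does, that both $T_\mu$ and $T_{\tilde\mu}$ exist.

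\emph{Inclusion $T_{\tilde\mu}\subset T_\mu$, $\tilde\mu$-a.e.} Let $N$ be the $\mu$-null set for which $T_\mu$ satisfies \eqref{eq:firstcond}. Since $\tilde\mu\ll\mu$, $N$ is also $\tilde\mu$-null. Hence $T_\mu$ satisfies \eqref{eq:firstcond} with respect to $\tilde\mu$, with the same exceptional set. Minimality of $T_{\tilde\mu}$ then gives $T_{\tilde\mu}(x)\subset T_\mu(x)$ for $\tilde\mu$-a.e. $x$.

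\emph{Inclusion $T_\mu\subset T_{\tilde\mu}$, $\tilde\mu$-a.e.} Write $\tilde\mu=h\mu$ and set $E=\{h>0\}$, a Borel set. Then $\tilde\mu$ and $\mu|_E$ have the same null sets. Let $\tilde N$ be the $\tilde\mu$-null set for $T_{\tilde\mu}$. Define a Borel bundle by $V(x)=T_{\tilde\mu}(x)$ for $x\in E$ and $V(x)=T_\mu(x)$ for $x\notin E$.

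I claim $V$ satisfies \eqref{eq:firstcond} for $\mu$ with exceptional set $N'=(\tilde N\cap E)\cup N$. This set is $\mu$-null, because $\tilde N\cap E$ is $\tilde\mu$-null and therefore $\mu|_E$-null. Take $\gamma\in\Fr(Y)$ with $|\gamma\inv(N')|=0$. Then $|\gamma\inv(\tilde N)\cap\gamma\inv(E)|=0$ and $|\gamma\inv(N)|=0$.
\begin{itemize}
\item For a.e. $t\in\gamma\inv(E)$, we want $\gamma'(t)\in T_{\tilde\mu}(\gamma_t)$. The catch is that \eqref{eq:firstcond} for $T_{\tilde\mu}$ only applies to fragments avoiding $\tilde N$ entirely, whereas $\gamma$ may meet $\tilde N\setminus E$ in positive measure.
\item To fix this, restrict $\gamma$ to a compact set $K\subset\gamma\inv(E)\setminus\gamma\inv(\tilde N)$ of nearly full measure in $\gamma\inv(E)$; this is possible by inner regularity. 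The restriction $\gamma|_K$ is a fragment with $|(\gamma|_K)\inv(\tilde N)|=0$, so $\gamma'(t)\in T_{\tilde\mu}(\gamma_t)$ for a.e. $t\in K$.
\item At Lebesgue density points of $K$, the derivative of $\gamma|_K$ coincides with $\gamma'(t)$, since the derivative in \eqref{eq:frag-derivative} is determined by the fragment near density points.
\item Exhausting $\gamma\inv(E)$ by countably many such $K$ gives the claim on $\gamma\inv(E)$.
\item For $t\notin\gamma\inv(E)$, we have $\gamma'(t)\in T_\mu(\gamma_t)$ a.e., since $|\gamma\inv(N)|=0$.
\end{itemize}

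Minimality of $T_\mu$ therefore yields $T_\mu\subset V$ $\mu$-a.e., that is, $T_\mu\subset T_{\tilde\mu}$ $\mu$-a.e. on $E$. Since $\tilde\mu$ is concentrated on $E$ and $\tilde\mu\ll\mu$, this inclusion holds $\tilde\mu$-a.e.

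The main technical point is the restriction-to-compact-subsets step. It relies on the fact that the exceptional-set formulation is stable under passing to subfragments and that derivatives of subfragments agree with those of the fragment a.e. Both facts follow from the discussion around \eqref{eq:frag-derivative}. The same localization trick also shows the gluing needed for Borel measurability of $V$.
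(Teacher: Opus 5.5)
Your proof is correct and follows the paper's argument. Uniqueness comes from minimality in both directions. For $\tilde\mu\ll\mu$, one inclusion holds because $T_\mu$ is a $\tilde\mu$-candidate. For the other, you glue a bundle equal to $T_{\tilde\mu}$ on the set where $\mu$ and $\tilde\mu$ have the same null sets (your $E=\{h>0\}$, the paper's $\bigcup_i E_i$) and equal to $T_\mu$ elsewhere, then invoke minimality of $T_\mu$.

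Your restriction to compact subfragments $K\subset\gamma^{-1}(E)\setminus\gamma^{-1}(\tilde N)$ makes explicit a point the paper's proof passes over: a fragment avoiding $(\tilde N\cap E)\cup N$ may still spend positive time in $\tilde N\setminus E$. Borel measurability of the glued $V$, by contrast, is immediate because $E$ is Borel, so no localization is needed there.
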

\begin{proof}
    The first claim follows from minimality in a direct way, so lets focus on the second one. From minimality, it is clear that $T_{\tilde{\mu}}(x)\subset T_\mu(x)$ for $\tilde{\mu}$-a.e. $x\in X$. since $T_\mu(x)$ satisfies \eqref{eq:firstcond} also with respect to $\tilde{
    \mu}$. 

    For $i\in \N$ we can find Borel sets $E_i$ s.t. $\mu|_{E_i} \ll \tilde{\mu}|_{E_i}$, where $\nu|_E(A)=\nu(E\cap A)$, and $\mu(X\setminus \bigcup E_i)=0$. Set $V(x)=T_{\tilde{\mu}}(x)$ for $x\in \bigcup_{i\in \N}E_i$ and otherwise $V(x)=T_\mu(x)$. We claim that $V$ satisfies \eqref{eq:firstcond} for $\mu$.

    Let $N_{\tilde{\mu}}$ be the null set for $T_{\tilde{\mu}}$ coming from \eqref{eq:firstcond}, and let $N$ be the one for $T_\mu$. Define $M=N\cup \bigcup_{i\in \N} N_{\tilde{\mu}}\cap E_i$. By construction $\mu(M)=0$. If $\gamma$ is a curve fragment, then for a.e. $t\in \dom(\gamma)$ for which $\gamma_t\not\in M$, we have either $\gamma_t\in E_i$ for some $i$ or not. For a full measure subet for which the second holds, then  $\gamma'(t)\in T_\mu(\gamma_t)$. For a full measure subset of $t$ such that $\gamma_t\in E_i$, we have $\gamma'(t)\in T_{\widetilde{\mu}}(\gamma_t)=V(x)$. The claim follows.
\end{proof}
 
The main results in this section are two existence results (Theorem \ref{thm:decomp-bundle} and Proposition \ref{prop:linear-part-of-blowup}) when $Y$ is an RNP-Banach space. The first guarantees existence of the decomposability bundle of any measure under an additional assumption on $Y$. The second shows existence under a finite dimensionality condition (on the measure, not the ambient space), and furthermore relates the decomposability bundle to the fragment-wise differential structure.

We say that a Borel regular measure $\mu$ on a Banach space $Y$ is concentrated on a separable set, if there exists a separable measurable set $E\subset Y$ s.t. $\mu(Y\setminus E)=0$. Note that if $\mu$ is a Borel regular measure s.t. $\mu(B(x,R))\in [0,\infty)$ for all $x\in Y, R>0$, then $\mu$ is easily seen to be concentrated on a separable set.
\begin{theorem}\label{thm:decomp-bundle}
Let Y be a Banach space so that $Y$ and $Y^*$ have the Radon-Nikodym property. Then any locally finite Borel regular measure, concentrated on a separable subset, admits a decomposability bundle.
\end{theorem}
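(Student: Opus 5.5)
The plan is to build $T_\mu$ as a minimizer of a suitable functional over all candidates satisfying \eqref{eq:firstcond}. The functional measures how large the annihilator of a candidate is, and the RNP of $Y^*$ is what makes a countable family of test functionals enough. The RNP of $Y$ only guarantees that $\gamma'(t)$ exists for every $\gamma\in\Fr(Y)$, so that \eqref{eq:firstcond} is meaningful.

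\emph{Reductions.} Let $E$ be a separable set carrying $\mu$, and let $Y_0=\overline{\operatorname{span}}\,E$; this is a separable closed subspace. Any fragment $\gamma$ with $|\gamma^{-1}(Y\setminus Y_0)|>0$ is excluded once $Y\setminus E$ is added to the null set $N$. Therefore $V(x)=Y_0$ for $x\in E$ is a candidate, and it suffices to work with candidates $V(x)\subset Y_0$. Since $Y^*$ has the RNP, $Y$ is Asplund, so $Y_0^*$ is separable; fix a norm-dense sequence $(\xi_k)\subset Y_0^*$. Replace $\mu$ by an equivalent finite measure $\tilde\mu$; by Lemma~\ref{lem:basic-prop-of-decomp-bundle} this does not affect the claim. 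For a Borel candidate $V$ set
\[
F(V)=\int\sum_k 2^{-k}\frac{\|\xi_k|_{V(x)}\|}{1+\|\xi_k\|}\,\ud\tilde\mu(x),
\qquad \|\xi_k|_{V(x)}\|=\operatorname{dist}(\xi_k,V(x)^\perp).
\]

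\emph{Minimization.} Countable intersections of candidates are candidates: take the union of the countably many null sets, and note that a property holding for a.e.\ $t$ for each of countably many conditions holds for a.e.\ $t$ for all of them simultaneously. Choose candidates $V_j$ with $F(V_j)\to\inf F$ and set $T(x)=\bigcap_j V_j(x)$. This is a closed subspace and a Borel candidate, and $F(T)=\inf F$ because $F$ is monotone under inclusion. Now let $W$ be any candidate. Then $T\cap W$ is a candidate with $F(T\cap W)\le F(T)$, hence equality holds, and by monotonicity termwise
\[
\operatorname{dist}(\xi_k,(T(x)\cap W(x))^\perp)=\operatorname{dist}(\xi_k,T(x)^\perp)\quad\text{for all }k,\ \tilde\mu\text{-a.e. }x.
\]
The set $T(x)^\perp$ is contained in $(T(x)\cap W(x))^\perp$, and both are norm-closed. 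Two norm-closed sets, one inside the other, whose distance functions agree on a dense set are equal. Hence $T(x)^\perp=(T(x)\cap W(x))^\perp$. By Hahn--Banach, a closed subspace is the pre-annihilator of its annihilator, so $T(x)=T(x)\cap W(x)\subset W(x)$ a.e. This is exactly the required minimality.

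\emph{Main obstacle.} The step I expect to be hardest is measurability: $F$ must be well defined, that is, $x\mapsto\|\xi_k|_{V(x)}\|$ must be $\tilde\mu$-measurable for a candidate with Borel graph, and $T$ must again have Borel graph. For the first point, I would restrict the class of admissible candidates to those for which these functions are measurable. This class contains $Y_0$ and, I expect, is closed under countable intersections. For the second point, the graph of $T$ is the intersection of the graphs of the $V_j$, hence Borel. If closure of the admissible class under intersections is problematic, I would instead use a measurable-selection argument: the sets $\{(x,v): v\in V(x),\ \|v\|\le1\}$ are Borel in a Polish space, so $x\mapsto\sup_{v}\xi_k(v)$ is universally measurable, and that suffices for integration against $\tilde\mu$. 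The remaining steps are then routine.
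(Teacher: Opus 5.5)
Your proposal is correct and follows the paper's proof essentially step for step: minimize $\int\phi(V_x)\,\ud\mu$ over Borel candidates, with $\phi$ built from a dense sequence in the separable dual $Y_0^*$, take the intersection of a minimizing sequence, and compare the minimizer $T$ with $T\cap W$. Your annihilator/distance-function argument is a repackaging of Lemma~\ref{lem:phi-distinguishes-subspaces}, and your fallback measurability argument is exactly the one the paper uses: superlevel sets of $x\mapsto\|\xi_k|_{V(x)}\|$ are projections of Borel sets, hence universally measurable.
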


In particular, any measure on $Y$ has a decomposability bundle whenever $Y$ is reflexive, or $Y^*$ is separable.

\begin{remark}\label{rmk:dunno}
We do not know whether the conclusion of Theorem \ref{thm:decomp-bundle} holds for the RNP space $Y=\ell^1$.
\end{remark}

In the second result we prove the existence of a decomposability bundle under a different assumption that $\mu$ is a measure on $X\subset Y$ where $Y$ is an RNP-Banach space and $(X,\mu)$ admits a fragment-wise chart. While the previous theorem applies to truly infinite dimensional settings, here the existence of the fragment-wise chart corresponds to a certain finite dimensionality of directions spanned by tangents of curve fragments.

\begin{proposition}\label{prop:linear-part-of-blowup} 
Let $(U,\varphi)$ be a $k$-dimensional fragment-wise chart of $(X,\mu)$. For $\mu$-a.e. $x\in U$ there exists a $k$-dimensional vector subspace $T_\mu(x)\subset Y$ and a linear bijection $L_{\varphi,x}:T_\mu(x)\to \R^k$ s.t.
\begin{itemize}
	\item[(i)] $x\mapsto T_\mu(x)$ is measurable as a map $U\to \operatorname{Gr}_k(Y)$;
	\item[(ii)] $\gamma_t'\in T_\mu({\gamma_t})$ a.e. $t\in \gamma\inv(U)$ for $\Mod_\infty$-a.e. $\gamma\in \Fr(X)$;
	\item[(iii)] $L_{\varphi,x}(\gamma_t')=(\varphi\circ\gamma)_t'$ a.e. $t\in \gamma\inv(U)$ for $\Mod_\infty$-a.e. $\gamma\in \Fr(X)$.
\end{itemize}
Moreover, $x\mapsto T_\mu(x)$ is the $\mu$-minimal bundle satisfying (i) and (ii). 
\end{proposition}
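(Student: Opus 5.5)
The plan is to apply Theorem \ref{thm:fragmentwise} to the inclusion map itself. We regard $X\subset Y$, with $\iota:X\to Y$ the inclusion, which is $1$-Lipschitz. Since $Y$ has the RNP, Theorem \ref{thm:fragmentwise} gives, for $\mu$-a.e. $x\in U$, a unique $\ud_x\iota\in\leb(\R^k,Y)$ with
\begin{align*}
\gamma_t'=(\iota\circ\gamma)_t'=\ud_{\gamma_t}\iota\big((\varphi\circ\gamma)_t'\big)\quad\textrm{a.e. }t\in\gamma\inv(U)
\end{align*}
for $\Mod_\infty$-a.e. $\gamma\in\Fr(X)$. We then set $T_\mu(x):=\ud_x\iota(\R^k)$ and define $L_{\varphi,x}$ as the inverse of $\ud_x\iota$ on its image. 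With these definitions, (ii) and (iii) are immediate once injectivity is known.

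\textbf{Injectivity.} We need $\ud_x\iota$ to be injective a.e., so that $T_\mu(x)$ is $k$-dimensional. The idea is to compose back with $\varphi$. Extend $\varphi$ from $X$ to a Lipschitz map $\Phi$ on $\overline{\mathrm{span}}\,X$ (McShane, componentwise), or more simply note the following. For a fragment $\gamma$ in $X$ and a density point $t$, $|(\varphi\circ\gamma)'_t|\le \LIP(\varphi)\|\gamma'_t\|$, and hence
\[
|(\varphi\circ\gamma)'_t|\le \LIP(\varphi)\,\|\ud_{\gamma_t}\iota((\varphi\circ\gamma)'_t)\|.
\]
By \cite[Theorem 6.4]{tsd24}, for a.e. $x$ the directions $(\varphi\circ\gamma)'_t$ over non-exceptional fragments through $x$ are dense in $S^{n-1}$. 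Hence $|v|\le \LIP(\varphi)\|\ud_x\iota(v)\|$ for all $v\in\R^k$ by continuity, and $\ud_x\iota$ is an isomorphism onto its image.

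\textbf{Measurability (i).} Since $x\mapsto\ud_x\iota(e_i)$ is Borel (it is obtained via the countable weak construction in the proof of Theorem \ref{thm:fragmentwise}), the span of $k$ measurable, a.e. linearly independent vectors is measurable into $\operatorname{Gr}_k(Y)$.

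\textbf{Minimality.} This is the main obstacle. Let $V$ satisfy (i) and (ii) with exceptional $\Mod_\infty$-null family; we must show $T_\mu(x)\subset V(x)$ a.e. Suppose not. Then on a positive measure set $E\subset U$ some direction $\ud_x\iota(v)$, with $v$ ranging in an open cone $C$ (after countable decomposition and measurable selection), lies outside $V(x)$. The independence condition of the chart gives an Alberti representation of $\mu|_E$ (restricted from those of $\mu|_U$, choosing $C$ among the independent cones $C_1,\dots,C_k$, or refining cones so that some $C_i$ works) with $(\varphi\circ\gamma)'_t\in C$. Alberti representations charge only $\Mod_\infty$-nonnull families: a family supporting such $\mathbb P$ cannot be $\Mod_\infty$-null, by \cite[Lemma 2.7]{tsd24}, since null families are exactly those hitting some $\mu$-null $N$ in positive length. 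So for $\mathbb P$-many $\gamma$, (ii) for $V$ and (ii) for $T_\mu$ both hold. This gives $\gamma'_t=\ud\iota((\varphi\circ\gamma)'_t)\in V(\gamma_t)$ with $(\varphi\circ\gamma)'_t\in C$. The step requiring care is arranging that the cones can be chosen so that $\ud_x\iota(C)\cap V(x)=\{0\}$ while still having a representation in that cone. The natural route: $W(x):=\ud_x\iota^{-1}(V(x)\cap T_\mu(x))$ is a proper subspace of $\R^k$ on $E$. Some cone $C_i$ among the $k$ independent ones must meet $W(x)$ only at $0$ on a positive-measure piece, exactly as in the maximality argument of Lemma \ref{lem:chartchar}. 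Then $\mu_\gamma(E')=0$ for $\mathbb P$-a.e. $\gamma$, forcing $\mu(E')=0$, a contradiction.
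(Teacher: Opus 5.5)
Your proof is correct, but it takes a different route from the paper's.

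\textbf{The paper's route.} The paper never invokes Theorem \ref{thm:fragmentwise} here. It measurably selects, from the $k$ independent Alberti representations, fragments $\gamma^x_l$ through $x$. It then defines $T_\mu(x)$ as the span of their velocities $\Gamma_l(x)$, and $L_{\varphi,x}$ by $\Gamma_l(x)\mapsto\Phi_l(x)=(\varphi\circ\gamma^x_l)'$. Property (ii) needs a separate approximation argument: cover by sets where $T_\mu$ is $\varepsilon$-close to a fixed plane $V_i$, bound $|Dp|_\ast\lesssim\varepsilon\|p\|$ for functionals $p$ vanishing on $V_i$, and pass to a countable norming family. Property (iii) follows by rerunning this on the graph of $\varphi$. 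Minimality is read off from the fact that the spanning velocities $\Gamma_l(x)$ must lie in any competitor bundle.

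\textbf{Your route.} You take $T_\mu(x)$ to be the image of the fragment-wise differential $\ud_x\iota$ of the inclusion $X\hookrightarrow Y$. This is not circular, since Theorem \ref{thm:fragmentwise} rests only on the scalar theory of \cite{tsd24}. With this choice (ii) and (iii) are immediate. The rank and injectivity follow cleanly from $|(\varphi\circ\gamma)'_t|\le\LIP(\varphi)\|\gamma'_t\|$ together with density of directions, exactly as in the last step of the proof of Theorem \ref{thm:fragmentwise}. The paper, by contrast, asserts well-definedness and bijectivity of $L_{\varphi,x}$ without comment.

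\textbf{The trade-off.} With your definition, minimality is no longer nearly tautological. You pay for it with the independent-cone linear algebra from the maximality part of Lemma \ref{lem:chartchar}: a proper subspace $W(x)\subsetneq\R^k$ must miss one of $C_1,\dots,C_k$. This is arguably more careful than the paper's one-line argument, which tacitly needs the selected fragments $\gamma^x_l$ to be generic for the competitor's exceptional family.

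\textbf{One sentence to correct.} It is not true that $\mathbb P$ cannot charge a $\Mod_\infty$-null family. A fragment may meet the null set $N$ in positive length away from where $\mu_\gamma$ lives, so you cannot claim that (ii) holds for $\mathbb P$-a.e.\ $\gamma$.

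What you need instead is the following, obtained from Lemma 2.7 of \cite{tsd24} by restricting fragments to compact subsets of $\gamma^{-1}(X\setminus N)$. There is a $\mu$-null set $N$ such that, for \emph{every} $\gamma\in\Fr(X)$, both $\gamma'_t\in V(\gamma_t)$ and $\gamma'_t=\ud_{\gamma_t}\iota((\varphi\circ\gamma)'_t)$ hold for a.e.\ $t\in\gamma^{-1}(U\setminus N)$.

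Combine this with $\mu_\gamma(N)=0$ for $\mathbb P$-a.e.\ $\gamma$ and with $\mu_\gamma\ll\Ha^1|_{\im(\gamma)}$. This gives $(\varphi\circ\gamma)'_t\in W(\gamma_t)\cap C_i=\varnothing$ at $\mu_\gamma$-a.e.\ point of $E'$. Hence $\mu_\gamma(E')=0$ and $\mu(E')=0$, as you conclude.
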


Our definition is different from the classical one in \cite{almar16}, which was originally given for measures in $\R^n$. We will show the equivalence in this case.

\begin{corollary}\label{cor:AM}
    If $\mu$ is a measure on $\R^n$, then the decomposability bundle $T_\mu$ agrees with the decomposability bundle of Alberti--Marchese \cite[Section 2.2]{almar16}.
\end{corollary}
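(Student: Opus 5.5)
We will prove the corollary by checking that the Alberti--Marchese bundle $V_{AM}$ of \cite{almar16} is a candidate in the sense of \eqref{eq:firstcond}, and conversely that $T_\mu$ satisfies their defining property, and then use the minimality built into both definitions. Recall that Alberti--Marchese define $V(\mu,x)$ through Alberti representations $\mu=\int\mu_\gamma\,\ud\mathbb P(\gamma)$ by Lipschitz curves (or fragments). They consider the class $\mathscr G$ of Borel maps $V$ such that $\tau_\gamma(x)\in V(x)$ for $\mu_\gamma$-a.e. $x$ and $\mathbb P$-a.e. $\gamma$, for every Alberti representation of every $\mu'\ll\mu$. The bundle $V(\mu,\cdot)$ is the $\mu$-a.e. minimal element of $\mathscr G$; its existence is \cite[Lemma 2.4, Theorem 2.6]{almar16}. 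It is also known (\cite[Prop.~2.8]{almar16}) that there is one Alberti representation $\mathcal A=\{\mu_\gamma,\mathbb P\}$ of $\mu$ whose tangents span $V(\mu,x)$ a.e.

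\emph{Step 1: $T_\mu\subset V(\mu,\cdot)$.} We show that $V_{AM}:=V(\mu,\cdot)$ satisfies \eqref{eq:firstcond}; minimality of $T_\mu$ (which exists by Theorem \ref{thm:decomp-bundle}, since $\R^n$ is reflexive) then gives the inclusion. Suppose the inclusion failed on a non-null piece. Then the set of fragments $\gamma$ avoiding a given $\mu$-null set $N$ on which $\gamma'_t\notin V_{AM}(\gamma_t)$ on a positive-measure set of $t$ is nonempty for every $N$. Equivalently, the family $\Gamma$ of such bad fragments satisfies $\Mod_\infty(\Gamma)>0$, by \cite[Lemma 2.7]{tsd24} applied in the space $(\R^n,\mu)$.

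A positive $\Mod_\infty$ family should produce a nontrivial Alberti representation of some $\mu'\ll\mu$ carried by bad pieces of fragments. We get it by the duality between $\Mod_\infty$ and measures on fragments, which is the standard argument of \cite{tsd24}, cf. the Borel functional of Lemma \ref{lem:chartchar}. Concretely, we restrict each bad fragment to the bad set of times $B_\gamma$, where $\gamma'\notin V_{AM}$, and push forward $\mathcal H^1$ on $\gamma(B_\gamma)$. Positivity of the modulus gives a probability measure $\mathbb P$ on $\Gamma$ whose averaged length measure $\int \mathcal H^1|_{\gamma(B_\gamma)}\,\ud\mathbb P$ is not singular to $\mu$. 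Its absolutely continuous part then gives an Alberti representation of some $\mu'\ll\mu$ with tangents outside $V_{AM}$ on positive measure. This contradicts $V_{AM}\in\mathscr G$. This extraction step, from positive $\Mod_\infty$ to an Alberti representation of an absolutely continuous measure, is the main technical obstacle. Measurability of the bad-fragment family and the use of a Rainwater/Choquet-type duality need care, and we plan to quote the corresponding statement from \cite{tsd24}, or \cite[Section 5]{bate2013differentiability}, rather than reprove it.

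\emph{Step 2: $V(\mu,\cdot)\subset T_\mu$.} This direction is easier. Take the representation $\mathcal A$ above, and let $N$ be the null set attached to $T_\mu$ in \eqref{eq:firstcond}. Since $\mu(N)=0$, we have $\mu_\gamma(N)=0$ for $\mathbb P$-a.e. $\gamma$. Because $\mu_\gamma\ll\mathcal H^1|_{\gamma}$, we may restrict $\gamma$ to a fragment with $|\gamma^{-1}(N)|=0$ that still carries $\mu_\gamma$, covering the support of $\mu_\gamma$ by countably many bi-Lipschitz pieces. Then $\gamma'_t\in T_\mu(\gamma_t)$ for $\mu_\gamma$-a.e. point. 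Hence $T_\mu$ satisfies the Alberti--Marchese condition for $\mathcal A$, and the same argument applies to every representation of every $\mu'\ll\mu$. So $T_\mu\in\mathscr G$, and minimality of $V_{AM}$ gives $V_{AM}\subset T_\mu$ $\mu$-a.e.

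Combining Steps 1 and 2 gives $T_\mu=V(\mu,\cdot)$ $\mu$-a.e., which proves the corollary.
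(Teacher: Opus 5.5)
Your Step 2 is the paper's argument for $V_\mu\subset T_\mu$. The null set $N$ attached to $T_\mu$ is null for every measure in the Alberti--Marchese class, so $T_\mu$ is admissible there and their minimality applies.

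For $T_\mu\subset V_\mu$ you take a genuinely different route. The paper argues through differentiability. \cite[Theorem 1.1(ii)]{almar16} gives a Lipschitz $f$ on $\R^n$ which, at $\mu$-a.e.\ $x$, is differentiable in no direction outside $V_\mu(x)$. Corollary \ref{cor:AM-diff}, built on Maleva--Preiss complete derivatives, makes every Lipschitz function differentiable along $T_\mu(x)$ at $\mu$-a.e.\ $x$. Together these force $T_\mu\subset V_\mu$.

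You instead show that $V_{AM}$ itself satisfies \eqref{eq:firstcond}. Your ``duality'' step is exactly Bate's Rainwater-type decomposition. Take $\Gamma'$ to be the fragments with $\gamma'_t\notin V_{AM}(\gamma_t)$ for a.e.\ $t$. If $\Gamma'$ is Borel, there is a Borel splitting $\R^n=A\cup S$ such that $\mu|_A$ has an Alberti representation concentrated on $\Gamma'$, and $\mathcal H^1(\gamma(\dom\gamma)\cap S)=0$ for all $\gamma\in\Gamma'$. The defining property of $V_{AM}$ forces $\mu(A)=0$, and then $N=A$ works in \eqref{eq:firstcond}. Indeed, suppose $|\gamma^{-1}(A)|=0$ but the bad time set of $\gamma$ has positive measure. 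Restricting $\gamma$ to a compact $K$ of positive measure inside that set gives a member of $\Gamma'$. Its image has positive length, yet meets neither $A$ nor $S$ in positive length, which is impossible. So the argument closes once you check that $\Gamma'$ is Borel (or analytic) in the space of fragments, which is the point you already flagged.

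What each route buys: yours is soft and purely measure-theoretic. It avoids both the Alberti--Marchese non-differentiability construction and the Maleva--Preiss theory. It also shows that a Borel bundle satisfies \eqref{eq:firstcond} if and only if it satisfies the Alberti--Marchese condition, and that proof does not use finite dimensionality. The paper's route is shorter given those theorems, and it ties $T_\mu$ directly to the differentiability of Lipschitz functions.
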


\subsection{Existence in the finite dimensional case}

We define the cone in a direction of a subspace.
\begin{definition}\label{eq:conesubsspace} If $V$ is a $k$-dimensional subspace of a Banach space $Y$, $C(V,\delta)$ is the collection of all $k$-dimensional subspaces $W$ s.t. for every $w\in W$ there exists a $v\in V$ with $d(v,w)\leq \delta |w|$.
\end{definition}

\begin{proof}[Proof of Proposition \ref{prop:linear-part-of-blowup}]
Since $Y$ has RNP, given $\gamma\in\Fr(X)$, the derivative 
\[
\gamma_t'=\lim_{s\to t}\frac{\gamma_{s}-\gamma_t}{s-t}\in Y
\]
exists as a limit in $Y$ for a.e. $t\in \dom(\gamma)$. 
    
Let $\mathscr A_1,\ldots,\mathscr A_k$ be Alberti representations of $\mu|_U$ in the $\varphi$-direction of independent cones $C_1,\ldots, C_k$. Then, by a selection argument similar to \cite[Proposition 2.9]{bate12diff} there are measurable maps $\widetilde U\to\Fr(X):x\mapsto \gamma^x_l$, $l=1,\ldots,k$ defined on a full measure Borel set $\widetilde U\subset U$ so that the vector fields
\begin{align*}
&\Gamma_l:\widetilde U\to Y,\quad \Gamma_l(x)= (\gamma_l^x)'((\gamma_l^x)\inv(x))\\
&\Phi_l:\widetilde U\to \R^k,\quad \Phi_l(x)= (\varphi\circ \gamma_l^x)'((\gamma_l^x)\inv(x))
\end{align*}
are measurable, and 
\begin{itemize}
	\item $\Phi_l(x)\in C_l$ for $l=1,\ldots,k$;
	\item there is a constant $C>0$ so that $|Df|_\ast(x)\le C\|\nabla f(x)\|$ $\mu$-a.e. on $\widetilde U$ for any $f\in \LIP(X)$, where
	\[
	\nabla f(x)=\Big(\frac{(f\circ\gamma_1^x)'((\gamma_1^x)\inv(x))}{\|\Gamma_1(x)\|},\ldots,\frac{(f\circ\gamma_k^x)'((\gamma_k^x)\inv(x))}{\|\Gamma_k(x)\|}\Big).
	\]
\end{itemize}
Denote moreover $\widehat\Gamma_l(x):=\frac{\Gamma_l(x)}{\|\Gamma_l(x)\|}$ the normalized vector fields. Define
\begin{align}\label{eq:span}
T_\mu(x)=\operatorname{span}\{\Gamma_1(x),\ldots, \Gamma_k(x) \},\quad x\in \widetilde U.
\end{align}
Since the spanning vector fields are measurable, $x\mapsto T_\mu(x)$ is measurable, as a composition of a measurable map $x\mapsto (\Gamma_l(x))_{l=1}^k$ and a continuous map $(v_1,\dots, v_n)\mapsto \operatorname{span}\{v_1,\dots, v_n \}$, proving (i). Moreover, the linear map 
\begin{align}\label{eq:lin-map}
L_{\varphi,x}:T_\mu(x)\to \R^k,\quad \Gamma_l(x)\mapsto \Phi_l(x)
\end{align}
is well-defined and bijective.

We now prove (ii). To this end, fix $\varepsilon>0$. Notice that $W:=\overline{\operatorname{span}}\{X-X\}\subset Y$ is separable and $T_\mu(x)\subset W$ for all $x\in\widetilde U$. Let $\{V_i\}\subset \operatorname{Gr}_k(W)$ be a countable dense set. The sets
\begin{align*}
	U_i=\{x\in \widetilde U: \ T_\mu(x)\subset C(V_i,\varepsilon) \}
\end{align*}
cover $\widetilde U$.  Fix $i\in\N$ such that $\mu(U_i)>0$ and let $e_1,\ldots,e_k$ be a basis of $V_i$ such that $\|\widehat\Gamma_l(x)-e_l\|\le \varepsilon$ for each $x\in U_i$. Then for any linear functional $p\in Y^*$ with $V_i\subset \ker(p)$ we have that 
\begin{align*}
	\|\nabla p(x)\|=\|(p(\widehat\Gamma_1(x)),\ldots,p(\widehat\Gamma_k(x)))\|\le \sqrt k\|p\| \varepsilon
\end{align*}
for $\mu$-a.e. $x\in U_i$. Consequently $|Dp|_\ast\le C'\|p\|\varepsilon$ for some constant $C'>0$ independent of $\varepsilon>0$. It follows from this that 
\begin{align*}
	|(p\circ\gamma)_t'|=|p(\gamma_t')|\le C'\|p\|\varepsilon\|\gamma_t'\|\quad\textrm{a.e. }t\in \gamma\inv(U_i),\quad\Mod_\infty\textrm{-a.e. }\gamma\in \Fr(X).
\end{align*}
Since $W_i=\overline{\operatorname{span}}\{X-X,V_i\}\subset Y$ is separable (and contains $W$), as in the proof of Theorem \ref{thm:fragmentwise}, we can find a countable set $\{p_j\}\subset Y^*$ with norm one and $p_j|_{V_i}=0$ so that $\dist (w,V_i)=\sup_j|p_j(w)|$ for all $w\in W_i$. Thus 
\begin{align*}
\dist(\gamma_t',V_i)=\sup_j|p_j(\gamma_t')|\le C'\varepsilon\|\gamma_t'\|\quad\textrm{a.e. }t\in \gamma\inv(U_i)
\end{align*}
for $\Mod_\infty\textrm{-a.e. }\gamma\in \Fr(X).$ Consequently $\gamma_t'\in C(V_i,C'\varepsilon)\subset C(V_{\gamma(t)},(1+C')\varepsilon)$ a.e. $t\in \gamma\inv(U)$ for $\Mod_\infty$-a.e. $\gamma\in\Fr(X)$. Since $\varepsilon>0$ is arbitrary, this implies (ii).

To prove (iii), consider the graph $G(\varphi)=\{(x,\varphi(x)):\ x\in X\}\subset Y\times\R^k$ and the chart $\widehat U=\{(x,\varphi(x)):\ x\in U\}$, $\hat\varphi=p_{\R^k}|_{G(\varphi)}$. The vector fields corresponding to $\Gamma_l$ are now $(\Gamma_l,\Phi_l)$, and by the argument above we have that 
\begin{align*}
(\gamma_t',(\varphi\circ\gamma)_t')\in \operatorname{span}\{(\Gamma_1(\gamma_t),\Phi_1(\gamma_t)),\ldots, (\Gamma_k(\gamma_t),\Phi_k(\gamma_t)) \}=G(L_{\varphi,x})
\end{align*}
a.e. $t\in \gamma\inv(U)$ and $\Mod_\infty$-a.e. $\gamma\in\Fr(X)$.  

It remains to prove the minimality claim in the statement of Proposition \ref{prop:linear-part-of-blowup}. If $x\mapsto W_x$ is a bundle satisfying (ii), then in particular $\Gamma_1(x),\ldots,\Gamma_k(x)\in W_x$ $\mu$-a.e. $x\in U$. By \eqref{eq:span} this implies that $T_\mu(x)\subset W_x$ $\mu$-a.e. $x\in U$, proving minimality.
\end{proof}

 Next, we study the differentiability of $f\in \LIP(Y)$. We say that $f$ is Gateaux differentiable at $x$ in the direction of a subspace $V$, if there exists a linear map $\ud_Vf:V\to \R$ s.t. for every $v\in V$, we have
 \[
f(x+tv)=f(x)+\ud_{V}f(v)+o(|t|).
 \]
 We say that $f$ is Fr\'echet differentiable at $x$ in the direction of $V$, if there exists a linear map $\ud_Vf:V\to \R$ s.t. we have
 \[
f(x+v)=f(x)+\ud_{V}f(v)+o(\|v\|_Y).
 \]
 When $V$ is finite dimensional, Gateaux differentiability is easily seen to be equivalent to Fr\'echet differentiability.

Let $Y$ be an RNP Banach space. Maleva and Preiss have constructed a notion of a complete derivative for every $f\in \LIP(Y)$ \cite[Definition 4.1]{malevapreiss}. We give a slight weakening of this notion, that is easier to state and is sufficient for our purposes. A complete derivative assignment is a Borel measurable assignment to each $x\in Y$ of a subspace $V(x)$ in such a way that 
 \begin{enumerate}
     \item $f$ is Gateaux differentiable in the direction of $V(x)$ at $x$, and
     \item for every Lipschitz map $g:\R\to Y$ we have $g'(t)\in V(g(t))$ for a.e. $t\in \R$. 
 \end{enumerate}
 Recall that an assignment $x\mapsto V(x)$ is Borel measurable, if the set $\{(x,v) : x\in Y, v\in V(x)\} \subset Y\times Y\}$ is a Borel set. 
 In \cite{malevapreiss}, the second condition is replaced with a stronger condition for all Lipschitz mappings $g:X\to Y$ where $X$ is a Banach space, and an inclusion of directional derivatives $D_eg(x)$ in $V(g(x))$ is required for all $e\in Y$ and $x\in Y\setminus N$, whenever the directional derivative exists, for some \emph{null-set} $N$. When $X=\R$ the notion of null set is equivalent to being Lebesgue null, and we obtain our variant.
 
\begin{corollary}\label{cor:AM-diff}
Let $Y$ be a separable Banach space with the Radon-Nikodym property, and let $\mu$ be a locally finite Radon measure on $Y$. Assume that a decomposability bundle $T_\mu$ exists. Then, every $f\in \LIP(Y)$ is Gateaux differentiable in the direction of $T_\mu(x)$ at $\mu$-a.e. $x\in Y$. Moreover, if $T_\mu(x)$ is finite dimensional at $\mu$-a.e. $x$, then $f$ is Fr\'echet differentiable in the direction of $T_\mu(x)$ for $\mu$-a.e. $x\in Y$.
\end{corollary}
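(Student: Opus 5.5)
The plan is to compare $T_\mu$ with a complete derivative assignment for $f$ in the sense of Maleva--Preiss \cite{malevapreiss}, and then use the minimality of $T_\mu$.

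Fix $f\in\LIP(Y)$. Since $Y$ is separable with the RNP, \cite[Definition 4.1]{malevapreiss} and the accompanying existence result give a complete derivative assignment $x\mapsto V(x)$ for $f$. We use it only in the weakened form stated above: $f$ is Gateaux differentiable at every $x$ in the direction of $V(x)$, and $g'(t)\in V(g(t))$ for a.e. $t$, for every Lipschitz $g:\R\to Y$.

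\emph{Step 1: pass to closures.} Let $W(x):=\overline{V(x)}$. By Lipschitz continuity, $|f(x+tw)-f(x+tv)|\le \LIP(f)|t|\,\|w-v\|$. The Gateaux derivative $\ud_{V(x)}f$ is linear with $|\ud_{V(x)}f(v)|\le\LIP(f)\|v\|$, so it extends continuously to $W(x)$. An $\varepsilon/3$-argument with the estimate above then shows that $f$ is Gateaux differentiable at $x$ in the direction of $W(x)$. Property (2) of course persists for $W$.

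\emph{Step 2: $W$ is a candidate.} Let $\gamma\in\Fr(Y)$ be arbitrary. Extend $\gamma$ to a Lipschitz curve $g:\R\to Y$ by interpolating linearly on the gaps of $\dom(\gamma)$ and taking it constant outside $I_\gamma$. At Lebesgue density points $t\in\dom(\gamma)$ where both $g'(t)$ and $\gamma'_t$ exist, we have $g'(t)=\gamma'_t$, as explained after \eqref{eq:frag-derivative}. Hence $\gamma'_t\in W(\gamma_t)$ for a.e. $t\in\dom(\gamma)$. So \eqref{eq:firstcond} holds with $N=\varnothing$.

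\emph{Step 3: measurability and minimality.} The step I expect to be the main obstacle is showing that $x\mapsto W(x)$ is Borel as a map into $\operatorname{Gr}(Y)$, since Borel measurability of the graph of $V$ need not obviously pass to closures. I would first try to show that $W$ is the $\mu$-a.e. limit of a Borel selection: take a countable dense set of directions and, using Borel measurability of the graph of $V$, build Borel sections $x\mapsto v_j(x)\in V(x)$ whose span is dense in $V(x)$. If that fails, I would instead intersect $W$ with $T_\mu$, or run the minimality argument on a Borel bundle squeezed between the two. Once $W$ is an admissible candidate, minimality gives $T_\mu(x)\subset W(x)$ for $\mu$-a.e. $x$. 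Restricting the Gateaux derivative from $W(x)$ to $T_\mu(x)$ then gives the first claim.

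\emph{Step 4: the finite-dimensional case.} If $T:=T_\mu(x)$ is finite dimensional, Gateaux differentiability along $T$ upgrades to Fréchet differentiability by the standard compactness argument. Cover the unit sphere of $T$ by a finite $\varepsilon$-net $\{e_j\}$, and choose $\delta>0$ with $|f(x+se_j)-f(x)-s\,\ud_Tf(e_j)|\le\varepsilon|s|$ for $|s|<\delta$ and all $j$. For $v\in T$ with $\|v\|<\delta$, pick $e_j$ close to $v/\|v\|$ and use the Lipschitz bounds on $f$ and on $\ud_Tf$. This gives
\[
|f(x+v)-f(x)-\ud_Tf(v)|\le(1+2\LIP(f))\,\varepsilon\|v\|,
\]
which is the Fréchet differentiability in the direction of $T_\mu(x)$.
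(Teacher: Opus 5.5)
Your proposal is correct and follows the paper's argument. You take a Maleva--Preiss complete derivative assignment $V$ for $f$, show via the linear extension of a fragment that it satisfies \eqref{eq:firstcond} with $N=\varnothing$, and get $T_\mu(x)\subset V(x)$ $\mu$-a.e. by minimality; your Steps 1 and 4 spell out the closure and the finite-dimensional Gateaux-to-Fr\'echet upgrade that the paper leaves implicit. The measurability concern in Step 3 is not a real obstacle, because the paper's definition makes $T_\mu$ minimal against \emph{any} map satisfying \eqref{eq:firstcond}, so you can apply minimality directly without the hedged Borel-selection argument.
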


\begin{proof}
A complete derivative assignment $V(x)$ for $f$ exists by \cite[Proposition 5.1 and Theorem 5.7]{malevapreiss}. We claim that $T_\mu(x)\subset V(x)$ for $\mu$-a.e. $x$, from which the claim follows.

Let $\gamma \in \Fr(Y)$, and extend $\gamma$ by extrapolating linearly to give a map $g:\R\to Y$. By the definition of a complete derivative, for a.e. $t\in \R$ we have $g'(t)\in V(g(t))$. By Lebesgue differentiation a.e. $t\in \dom(\gamma)$ is not an isolated point. Thus, for a.e. $t\in \dom(\gamma)$ we also have $\gamma'(t)=g'(t)$ and $\gamma'(t)\in V(\gamma(t))$. This holds for all $\gamma\in \Fr(Y)$, and thus $x\mapsto V(x)$ is a candidate for the decompoability bundle of $\mu$. Therefore $T_\mu(x)\subset V(x)$ for $\mu$-a.e. $x$, and the claim follows.
\end{proof}
 
Before we prove Corollary \ref{cor:AM}, we briefly recall the exact definition from \cite[Section 2.6]{almar16} of a decomposability bundle. First, let $\mathcal{F}_\mu$ be the collection of all families $\{\mu_t : t\in I\}$, where $I$ is some measure space, with measure $dt$, and so that 
\begin{enumerate}
    \item $A\mapsto \mu_t(A)$ is Borel measurable and $\int \mu_t(\R^n) dt < \infty$,
    \item $\mu_t=\mathcal{H}^1|_E$ for some $1$-rectifiable $E_t$ for each $t\in I$,
    \item The measure $\nu$ given by $\nu(A)=\int \mu_t(A) dt$ is absolutely continuous with respect to $\mu$.
\end{enumerate}
Here, $\mathcal{H}^1$ is the usual Hausdorff $1-$measure.
The decomposablity bundle $V_\mu(x)$ is the $\mu$-a.e. minimal measurable map $V_\mu:\R^n \to {\rm Gr}(Y)$ s.t. for every family $\{\mu_t : t\in I\}$ in $\mathcal{F}_\mu$ it holds that ${\rm Tan}(E_t, x)\subset V_\mu(x)$ for a.e. $t\in I$ and $\mu_t$ a.e. $x\in E_t$, where ${\rm Tan}(E_t, x)$ is the weak tangent for a rectifiable set defined almost everywhere.

\begin{proof}[Proof of Corollary \ref{cor:AM}]
By the definition of $T_\mu(x)$, we have that there is a null set $N$ so that $\gamma'(t)\in T_{x}(\gamma_t)$ at a.e. $t\in \dom (\gamma )$ for all curve fragments $\gamma$ for which $|\gamma^{-1}(N)|=0$. Suppose that $E$ is a $1$-rectifiable set with $\mathcal{H}^1(E\cap N)=0$, then, by the definition of rectifiability, we can find curve fragments $\gamma_i:K_i\to E$ s.t. $\mathcal{H}^1(E \setminus \bigcup_{i\in \N} \gamma(K_i))=0$. Thus, $|\gamma_i^{-1}(N)|=0$, and therefore for a.e. $t\in K_i$, we have $\gamma'_i \in T_\mu(\gamma_i(t))$. For a.e. $t\in K_i$ we have ${\rm Tan}(E, \gamma_i(t))={\rm span}(\gamma'_i(t))$, and therefore for $\mathcal{H}^1$-a.e. $x\in E$ we have ${\rm Tan}(E, x)\subset T_\mu(x)$. Let now $\{\mu_t: t\in I\}\in \mathcal{F}_\mu$.  Since $N$ is a null set and $\nu(A)=\int \mu_t(A) dt$ is absolutely continuous with respect to $\mu$, we get 
\[
\int \mu_t(N) dt = \int \mathcal{H}^1(E_t \cap N) dt = 0.
\]
Thus, for a.e. $t\in I$ the set $E_t$ satisfies the conclusion that ${\rm Tan}(E, x)\subset T_\mu(x)$ for $\mu_t$-a.e. $x\in E_x$. By minimality of $V_\mu$, we get $V_\mu(x)\subset T_\mu(x)$ for $\mu$-a.e. $x\in \R^n$.

For the opposite inclusion, note that \cite[(ii) in Theorem 1.1]{almar16} implies that there exists a Lipschitz function $f\in \LIP(\R^n)$ that for  $\mu$-a.e. $x\in \R^n$ is not differentiable in any direction $v\not\in V_\mu(x)$.  Corollary \ref{cor:AM-diff} on the other hand shows that every $f\in \LIP(\R^n)$ is differentiable in the directions of $T_\mu(x)$ for $\mu$-a.e. $x\in \R^n$. Thus $T_{\mu}(x)\subset  V_\mu(x)$ for $\mu$-a.e. $x\in \R^n$.

\end{proof}

\subsection{Existence in the infinite dimensional case}

Throughout this subsection we fix an RNP-space $Y$ such that $Y^*$ also has RNP. The latter property is equivalent to $Y$ being Asplund, i.e. every separable subspace of $Y$ has separable dual \cite[Theorem 6]{asplund}. We refer to \cite{asplund} for an exposition of Asplund spaces and their many equivalent characterizations (including the ones mentioned above). See also \cite{asplundoriginal,namiokaphelps} for original treatises, where these spaces were introduced.
 
Let $Y_0 \subset Y$ be separable. Then $Y_0^*$ is separable. Fix a countable norm dense subset $\{e_i\} \subset B_{Y_0^*}$.
Set 
\[
\phi(V) = \sum_i 2^{-i} \| e_i|_V \|,\quad V\subset Y_0\;\mbox{ closed linear subspace}.
\]

Clearly $\phi(V) \le \phi(W)$ if $V \subset W$ and $0\le \phi(V)\le 1$ for every $V\subset Y_0$.

\begin{lemma}\label{lem:phi-distinguishes-subspaces}
If $V \subset W \subset Y_0$ are two subspaces, and $\phi(W) \le \phi(V)$, then $V = W$.
\end{lemma}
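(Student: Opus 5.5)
Since $V\subset W$, each restriction satisfies $\|e_i|_V\|\le \|e_i|_W\|$. Combined with $\phi(W)\le\phi(V)$ and the positivity of the weights $2^{-i}$, this gives
\[
\sum_i 2^{-i}\big(\|e_i|_W\|-\|e_i|_V\|\big)\le 0
\]
with every summand nonnegative. Hence $\|e_i|_V\|=\|e_i|_W\|$ for every $i$. The plan is to pass from this equality on the countable dense set $\{e_i\}\subset B_{Y_0^*}$ to all functionals, and then to show that this forces $V=W$ using Hahn--Banach together with the closedness of $V$.

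\textbf{Extension to all of $Y_0^*$.} The maps $e\mapsto \|e|_V\|$ and $e\mapsto\|e|_W\|$ are $1$-Lipschitz on $Y_0^*$, since $\big|\|e|_V\|-\|e'|_V\|\big|\le \|e-e'\|$. Because $\{e_i\}$ is norm dense in $B_{Y_0^*}$ (this is where separability of $Y_0^*$, i.e. the Asplund assumption, is used), we obtain $\|e|_V\|=\|e|_W\|$ for all $e\in B_{Y_0^*}$. By homogeneity this then holds for all $e\in Y_0^*$.

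\textbf{Conclusion.} Suppose $V\subsetneq W$ and pick $w\in W\setminus V$. Since $V$ is a closed linear subspace of $Y_0$, Hahn--Banach gives $e\in Y_0^*$ with $e|_V=0$ and $e(w)=\dist(w,V)>0$. Then $\|e|_V\|=0$ while $\|e|_W\|\ge e(w)/\|w\|>0$, contradicting the previous step. Hence $V=W$.

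The only point needing care is that the density of $\{e_i\}$ is in the norm topology of $Y_0^*$. Weak$^*$ density alone would not suffice to transfer the equality of restriction norms, and norm density is exactly what separability of $Y_0^*$ provides. The remaining steps are routine.
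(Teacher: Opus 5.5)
Your proof is correct and is essentially the paper's argument. Both use termwise equality $\|e_i|_V\|=\|e_i|_W\|$, then a Hahn--Banach functional vanishing on the closed subspace $V$ but not at some $w\in W\setminus V$, transferred through norm density of $\{e_i\}$; the paper approximates that functional by a sequence $e_{i_k}$ rather than first extending the equality to all of $Y_0^*$, which is the same idea in a different order. Your normalization $e(w)=\dist(w,V)$ is also the accurate form of Hahn--Banach here, whereas the paper's requirement $e(w)=\|w\|$ together with $e|_V=0$ and $\|e\|\le 1$ need not be attainable, though its argument goes through verbatim with $\dist(w,V)$ in place of $\|w\|$.
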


\begin{proof}
Since $\| e_i|_W \| \ge \| e_i|_V \|$ for all $i$, the condition
\[
\sum 2^{-i} \| e_i|_W \| = \phi(W) \le \phi(V) = \sum 2^{-i} \| e_i|_V \|
\]
implies that $\| e_i|_W \| = \| e_i|_V \|$ for all $i\in\N$.

Now suppose $V\subsetneq W$, and let $w \in B_W \setminus V$ and $e \in B_{Y_0^*}$ be such that
\[
e(w) = \|w\| > 0,\qquad e|_V = 0.
\]
By density of $\{e_i\}\subset B_{Y^*_0}$ there exists a sequence $(e_{i_k})_{k\in \N}$ s.t. $e_{i_k} \to_{k\to\infty} e$ in norm. Thus, $\|w\|=e(w) = \lim e_{i_k}(w)$ and $\| e_{i_k}|_V \| < \|w\|/4$ for large $k$. This implies that 
\[
\|e_{i_k}|_V\|\le \|w\|/4<3\|w\|/4\le e_{i_k}(w)\le \|e_{i_k}|_W\|
\]
for large $k$, contradicting the fact that $\| e_i|_W \| = \| e_i|_V \|$ for all $i\in\N$. Thus $V = W$.
\end{proof}

\begin{proof}[Proof of Theorem \ref{thm:decomp-bundle}]
Let $\mu$ be a boundedly finite Borel regular measure on $Y$  that is concentrated on a separable set. We can assume that $\mu$ is finite, cf. Lemma \ref{lem:basic-prop-of-decomp-bundle}. Then $\spt\mu$, and thus $Y_0 = \overline{\mbox{span}}\{\spt\mu\}$, is separable.

Consider the collection $\mathcal C$ of Borel bundles $x \mapsto V_x \in \operatorname{Gr}(Y_0)$ with the following property:
\begin{align}\label{eq:tang-bundle}
\gamma_t'\in V_{\gamma(t)}\quad \mbox{a.e.}\; t\in \gamma\inv(\spt\mu)    
\end{align}
for $\Mod_\infty^\mu$-a.e. $\gamma\in \Fr(\spt\mu)$. The collection $\mathcal C$ is non-empty because the constant bundle $x\mapsto Y_0$ belongs to $\mathcal C$. We moreover remark that if $(V^i)\subset \mathcal C$ is a countable set, then the pointwise (countable) intersection $\displaystyle \bigcap_iV^i$ belongs to $\mathcal C$. Indeed, if $\Gamma_i$ are the $\Mod_\infty^\mu$-null families of fragments where \eqref{eq:tang-bundle} fails for $V^i$, then $\Gamma:=\bigcup_i\Gamma_i$ is $\Mod_\infty^\mu$-null and for each $\gamma\notin\Gamma$ there are null-sets $N_i\subset \dom(\gamma)$ so that we have $\gamma_t'\in V_{\gamma(t)}^i$ for $t\in \gamma\inv(\spt\mu)\setminus N_i$.  Thus $\gamma_t'\in \bigcap_iV_{\gamma(t)}^i$ for all $t\in \dom(\gamma)\setminus N$, where $N:=\bigcup_iN_i\subset \dom(\gamma)$. Since $N$ is a null-set, $\bigcap_i V^i$ satisfies \eqref{eq:tang-bundle} and is thus in $\mathcal{C}$.

 Note that, for each $V \in \mathcal C$, and $e\in Y_0^*$ the map $\phi_e:x\mapsto \|e|_{V(x)}\|$ is $\mu-$measurable. Indeed, $\phi_e^{-1}(a,\infty)=\pi(\{(x,v): e(v)\in (a,b), |v|\leq 1, v\in V(x)\})$ is a Suslin set, since $\{(x,v): e(v)\in (a,b), |v|\leq 1, v\in V(x)\}$ is Borel. Here, $\pi:Y\times Y \to Y$ is the projection map $\pi(x,v)=x$. As a sum of such maps $x \mapsto \phi(V_x)$ is $\mu$-measurable.m Consider the minimization problem
\[
\inf_{V \in \mathcal C} \int \phi(V_x)\, d\mu(x) =: A,
\]
and let $(V^n) \subset \mathcal C$ be a minimizing sequence. The intersection $W_x = \bigcap V^n_x$ belongs to $\mathcal C$ as discussed above. It follows
that
\[
A = \int \phi(W_x)\, d\mu = \lim_{n\to\infty} \int \phi(V^n_x)\, d\mu(x) = A,
\]
i.e. $W \in C$ minimizes
\[
\int \phi(V_x)\, d\mu
\]
among $V \in \mathcal C$. We claim that $W$ is $\mu$-a.e. minimal with respect to inclusion, i.e. if $V \in \mathcal C$ then $W_x \subset V_x$ $\mu$–a.e.

Indeed, given $V \in \mathcal C$, the intersection $V \cap W$ belongs to $\mathcal C$. The pointwise inequality
\[
\phi(V_x \cap W_x) \le  \phi(W_x) \quad \mu\text{–a.e.}
\]
and the integral inequality
\[
\int \phi(W_x)\, d\mu \le \int \phi(V_x \cap W_x)\, d\mu
\]
together imply $\phi(W_x) = \phi(V_x \cap W_x)$ $\mu$–a.e., which by Lemma \ref{lem:phi-distinguishes-subspaces} implies that $W = V \cap W$ $\mu$–a.e., completing the proof of the claim. 

We have proven that $W$ is the $\mu$-a.e. minimal Borel bundle satisfying \eqref{eq:tang-bundle}, thus it is the decomposability bundle of $\mu$.
\end{proof}

\subsection{Density of directions} 

For measures on Banach spaces finite dimensional fragment-wise charts can be taken as linear maps, as follows from the following lemma.  

\begin{lemma}\label{lem:proj-are-charts}
Suppose that $Y$ is an RNP Banach space and  $\mu$ a Radon measure on $Y$ that is supported on $X$ has a decomposability bundle $T_\mu$. Let $p:Y\to V$ be a bounded linear map to an $n$ dimensional normed space $V$ and $U\subset X$ such that $\mu(U)>0$ and $p|_{T_\mu(x)}$ is bijective for $\mu$-a.e. $x\in U$. Then $(U,p)$ is an $n$-dimensional fragment-wise chart.
\end{lemma}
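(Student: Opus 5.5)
The plan is to verify the characterization of Lemma \ref{lem:chartchar}: $(U,p)$ is a fragment-wise chart as soon as every $f\in\LIP(X)$ has an a.e. unique fragment-wise differential with respect to $(U,p)$. For existence I would define, for $\mu$-a.e. $x\in U$, the inverse $A_x:=(p|_{T_\mu(x)})\inv:V\to T_\mu(x)$. I would then try to show that $\ud_xf:=D_{T_\mu(x)}f\circ A_x$ is a fragment-wise differential. Here $D_{T_\mu(x)}f$ is the derivative of $f$ in the direction of $T_\mu(x)$.

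\textbf{Existence.} Since $f$ is only defined on $X$, I would first extend it to a Lipschitz function $\tilde f$ on $Y$, e.g. by McShane. By Corollary \ref{cor:AM-diff}, the extension $\tilde f$ is (Gateaux, hence directionally) differentiable in the direction of $T_\mu(x)$ at $\mu$-a.e. $x$. For a curve fragment $\gamma$ in $X$ and a.e. $t\in\gamma\inv(U)$, outside the null set $N$ of Definition \ref{def:decomp-bundle} and the nondifferentiability set, I would argue as follows. By the definition of the bundle, $\gamma'_t\in T_\mu(\gamma_t)$, and by linearity of $p$ we have $(p\circ\gamma)'_t=p(\gamma'_t)$, so that $A_{\gamma_t}((p\circ\gamma)'_t)=\gamma'_t$. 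The difficulty is to get $(f\circ\gamma)'_t=D_{T_\mu(\gamma_t)}\tilde f(\gamma'_t)$: directional differentiability along a subspace does not by itself control $f$ along a curve tangent to it. The complete derivative assignment from the proof of Corollary \ref{cor:AM-diff} (Maleva--Preiss) handles this, since it provides a Gateaux derivative along $V(x)\supset T_\mu(x)$ that is compatible with derivatives of $\tilde f\circ g$ for Lipschitz curves $g$. Alternatively, I would use that $f\circ\gamma$ differs from $\tilde f(\gamma_t+(s-t)\gamma'_t)$ by $o(|s-t|)$ because $f$ is Lipschitz and $\gamma$ is differentiable at $t$. The latter argument is the cleaner one, so I would use it: it needs only directional differentiability at $\gamma_t$ in the direction $\gamma'_t$.

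\textbf{Null sets and measurability.} Passing from "$\mu$-a.e. $x$" to "$\Mod_\infty$-a.e. $\gamma$, a.e. $t$" uses \cite[Lemma 2.7]{tsd24}, exactly as in earlier proofs. Measurability of $x\mapsto\ud_xf$ follows from measurability of $T_\mu$ and of the difference quotients.

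\textbf{Uniqueness.} Suppose $\lambda_x\in V^*$ is Borel, nonzero on a positive-measure set $E\subset U$, and satisfies $\lambda_{\gamma_t}((p\circ\gamma)'_t)=0$ for $\Mod_\infty$-a.e. $\gamma$ and a.e. $t$. I would show that the bundle $W(x):=T_\mu(x)\cap\ker(\lambda_x\circ p)$ on $E$, set equal to $T_\mu$ elsewhere, is a candidate for the decomposability bundle. Indeed, $\gamma'_t\in T_\mu(\gamma_t)$ and $\lambda_{\gamma_t}(p(\gamma'_t))=0$; after again converting between the $\Mod_\infty$-null family and a $\mu$-null set $N$ via \cite[Lemma 2.7]{tsd24}, this gives \eqref{eq:firstcond}. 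Since $p|_{T_\mu(x)}$ is bijective and $\lambda_x\ne0$, $W(x)$ is a proper subspace of $T_\mu(x)$ on $E$, which contradicts minimality. Borel measurability of $W$ would be checked directly from its graph.

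The main obstacle I expect is the existence step, namely making the identification $(f\circ\gamma)'_t=D\tilde f(\gamma'_t)$ rigorous uniformly in $\gamma$. The null set from Corollary \ref{cor:AM-diff} is $\mu$-null, so it must be avoided by $\Mod_\infty$-a.e. fragment. The Lipschitz-comparison argument only needs differentiability at points $\gamma_t$ outside a $\mu$-null set, which \cite[Lemma 2.7]{tsd24} provides.
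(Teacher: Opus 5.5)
Your proposal is correct and follows the paper's proof essentially step by step. You reduce to Lemma~\ref{lem:chartchar}, obtain existence from a McShane extension and Corollary~\ref{cor:AM-diff} with $\ud_xf=\ud_{T_\mu(x)}f\circ(p|_{T_\mu(x)})\inv$, and obtain uniqueness by showing that $T_\mu(x)\cap\ker((\xi_1-\xi_2)\circ p)$ is a candidate bundle, so minimality forces $\xi_1=\xi_2$ a.e.\ on $U$. Your Lipschitz-comparison argument justifies the chain-rule step $(f\circ\gamma)'_t=\ud_{T_\mu(\gamma_t)}f(\gamma'_t)$, which the paper simply asserts. It uses only Gateaux differentiability at $\gamma_t$ in the direction $\gamma'_t$, off a $\mu$-null set that $\Mod_\infty$-a.e.\ fragment avoids.
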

\begin{proof}
The statement is independent of the choice of norm on $V$, and thus we can assume $V=\R^n$ with the standard norm. To prove that $(U,p)$ is a fragment-wise chart we will check the equivalent condition of differentiability in Lemma \ref{lem:chartchar}. Let $f:X\to \R^n$ be Lipschitz. By McShane extension theorem, we may extend it as a Lipshitz function $f:Y\to \R$. By Corollary \ref{cor:AM-diff}, $f$ is differentiable $\mu$-a.e. with respect to $T_\mu(x)$ with derivative $\ud_{T_{\mu}(x)}f$. Define $\ud_x f(v)=\ud_{T_{\mu}(x)}f(p|_{T_{\mu}(x)}^{-1}(v))$. For $\Mod_\infty$-a.e. $\gamma$ and a.e. $t\in \dom(\gamma)$ we have 
\begin{enumerate}
    \item $\gamma'_t\in T_\mu(\gamma_t)$ and
    \item $(f\circ \gamma)'_t = \ud_{\gamma_t}f$.
\end{enumerate}
From these two conditions, it follows that $\ud_x f$ is a fragment-wise differential with respect to $p$.

Next, we argue uniqueness. Suppose $\xi_1,\xi_2$ were two fragment-wise differentials for some $f$. Let
\[
T^\xi_\mu(x)=\{v\in T_\mu(x) : \xi_1(p(v))=\xi_2(p(v))\}.
\]
For $\Mod_\infty$-a.e. $\gamma\in \Fr(X)$ and a.e. $t\in \dom(\gamma)$ we have $(f\circ \gamma)_t' = \xi_i(p(\gamma_t'))$ for both $i=1,2$. Thus, we must have $\gamma_t'\in T^\xi_\mu(\gamma_t)$ for $\Mod_\infty$-a.e. $\gamma\in \Fr(X)$ and a.e. $t\in \dom(\gamma)$. Thus, $T^\xi_\mu(\gamma_t)$ is a candidate for the decomposability bundle, and by minimality of $T_\mu$ we have $T^\xi_\mu(\gamma_t)=T_\mu(\gamma_t)$ for $\mu$-a.e. $x\in X$. This implies that $\xi_1=\xi_2$ for $\mu$-a.e. $x\in U$, since $p$ is a bijective on $T_\mu(x)$.
\end{proof}

We end this subsection with a version of density of directions, which was established in a slightly different form in \cite[Theorem 6.4]{tsd24}.

\begin{lemma}\label{lem:densityofdir} Assume that $\mu$ is a Radon measure supported on a subset $X$ of an RNP Banach space $Y$ that has a finite dimensional decomposability bundle $T_\mu(x)$. Then, for $\mu$-a.e. $x$, the set 
\[
\{\gamma'_t \in Y:\ t \text{ is a Lebesgue point of } \dom(\gamma), \gamma_t=x, \text{ and } \gamma'_t \text{ exists }\}
\]
is dense in $T_\mu(x)$.
\end{lemma}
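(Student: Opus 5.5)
We argue by contradiction using the minimality of $T_\mu$. For $x$ in the support, let $D(x)\subset T_\mu(x)$ be the closure of the set of derivatives in the statement. Since every derivative $\gamma'_t$ at a point with $\gamma_t\notin N$ already lies in $T_\mu(\gamma_t)$ (outside a null set of $t$), we have $D(x)\subset T_\mu(x)$ after discarding a $\mu$-null set. The content of the lemma is the reverse inclusion. Note that $D(x)$ is a closed cone, and in fact symmetric: reversing the parametrization $t\mapsto -t$ changes the sign of the derivative. Since $T_\mu(x)$ is finite dimensional, if $D(x)\neq T_\mu(x)$ on a set of positive measure, then by a countable-dense-set argument as in the proof of Proposition \ref{prop:linear-part-of-blowup} we can find a fixed open cone that misses $D(x)$. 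More precisely, we get a Borel set $U$ with $\mu(U)>0$, a subspace $V_i$ from a countable dense family in $\operatorname{Gr}_k(W)$ close to $T_\mu(x)$ for $x\in U$, a unit vector $v_0$ near $V_i$ and $\varepsilon>0$ such that for $x\in U$ the cone $C_x=\{w\in T_\mu(x): \dist(w/\|w\|, \R_+ v_0)<\varepsilon\}$ is nonempty and disjoint from the derivatives at $x$.

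Next we show that this gives a strictly smaller candidate bundle. Candidates must be subspaces, so a cone cannot simply be removed. Instead we use a linear map and Lemma \ref{lem:proj-are-charts}. Choose a bounded linear $p:Y\to\R^k$ which is bijective on $T_\mu(x)$ for $x$ in a positive-measure subset of $U$; this is possible since $T_\mu(x)$ is close to $V_i$. By Lemma \ref{lem:proj-are-charts}, $(U,p)$ is a $k$-dimensional fragment-wise chart, so $\mu|_U$ admits $k$ $p$-independent Alberti representations. Their cones $p$-directions can be chosen freely by density of directions in the chart sense (\cite[Theorem 6.4]{tsd24}), and in particular one of them lies in the direction $p(C_x)$. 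However, $\mathbb P$-a.e. fragment of such a representation has derivatives $\gamma'_t\in T_\mu(\gamma_t)$ with $p(\gamma'_t)$ in the cone. At $\mu_\gamma$-a.e. point, $t$ is a Lebesgue density point and $\gamma'_t$ exists, so $\gamma'_t$ lies in $C_{\gamma_t}$. This contradicts the choice of $U$, unless $\mu_\gamma(U)=0$ for a.e. $\gamma$, which forces $\mu(U)=0$.

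The main obstacle is to justify that an Alberti representation can be taken in an arbitrary prescribed cone of $p$-directions. This is exactly the density-of-directions statement of \cite[Theorem 6.4]{tsd24} for fragment-wise charts, applied to the chart $(U,p)$, together with identifying the derivative via $p|_{T_\mu(x)}^{-1}$. This identification is valid because $\gamma'_t\in T_\mu(\gamma_t)$ and $p(\gamma'_t)=(p\circ\gamma)'_t$. If a direct reference is unavailable, we would instead show that the maximality in Definition \ref{def:fwchart} fails: a real Lipschitz function $f$ that vanishes on fragments not in the cone direction admits two distinct fragment-wise differentials, which contradicts uniqueness in Lemma \ref{lem:chartchar}. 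Such an $f$ can be constructed from the minimality property of $T_\mu$ via Corollary \ref{cor:AM-diff}.
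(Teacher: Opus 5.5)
Your proposal takes essentially the same route as the paper: pass to the fragment-wise chart $(U,p)$ via Lemma \ref{lem:proj-are-charts}, take an Alberti representation of $\mu|_U$ in the $p$-direction of a prescribed cone, and use $\mu_\gamma(N\cap U)=0$ for $\mathbb P$-a.e. $\gamma$ to see that $\mu_\gamma$-a.e. point of such a fragment is itself a witness, so the bad set is $\mu$-null. The paper gets the cone representation from \cite[Corollary 3.95]{sch16b}, which is exactly what you flag as the main obstacle; it also works directly with fixed cones $C\subset\R^n$ pulled back by $(p|_{T_\mu(x)})^{-1}$, which avoids your $x$-dependent cones $C_x$ (an Alberti representation needs one fixed cone, so you would have to shrink $p(C_x)$ to a common cone on $U$), and your unused side remark that $D(x)\subset T_\mu(x)$ for a.e. $x$ is false in general, since fragments of $X$ through a single point may have derivatives outside $T_\mu(x)$.
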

\begin{proof}
    We can cover $\mu$-a.e. of the space by measurable sets $U$ of positive measure, that have linear projections $p:T_\mu(x)\to \R^n$ that is bijective for $\mu$-a.e. $x\in U$. We argue for a.e. $x\in U$, and then the full claim follows by a covering argument.

    By Lemma \ref{lem:proj-are-charts}, $(U,p)$ is a fragment-wise chart for $(X,\mu)$. Let $C$ be an arbitrary cone in $\R^n$. We will show that for $\mu$-a.e. $x\in U$ there exists $\gamma\in \Fr(X)$ and $t\in \dom(\gamma)$ with $\gamma_t=x$, and $\gamma'_t \in p^{-1}(C)\cap T_\mu(x)$. From this, varying $C$ and a linear reparametrization of $\gamma$, density follows. Let $D\subset U$ be the set of points where there does not exist such a curve fragment. The complement of $D$ is a continuous image of the evaluation map from $\Fr(X)\times \R$ of a Borel set, and thus a Suslin set and $\mu$-measurable; cf. \cite[Lemma A.1 and Lemma A.2]{ts24} for similar arguments. Thus, $D$ is also $\mu$-measurable.

    Since $p$ is a fragment-wise chart, the measure $\mu|_U$ admits $n$ $p$-independent Alberti representations. By \cite[Corollary 3.95]{sch16b} $\mu|_U$ also admits an Alberti representation $\{\mu_\gamma, \mathbb{P}\}$ in $p$-direction of $C$.

    Let now $N$ be the set given by $T_\mu$ being a decomposability bundle: for all $\gamma\in \Fr(X)$ and a.e. $t\in \dom(\gamma)$ with $\gamma_t\not\in N$ we have $\gamma'_t\in T_\mu(\gamma_t)$.  We have
    \[
    0=\mu(N\cap U)=\int_{\Fr(X)} \mu_\gamma(N) d\mathbb{P},
    \]
    and thus $\mu_\gamma(N\cap U)=0$ for $\mathbb{P}$-a.e. $\gamma$. Thus, for $\mu_\gamma$-a.e. $t\in \dom(\gamma)$ we have $\gamma'_t\in T_\mu(\gamma_t)$. Since also $(p\circ \gamma)'_t\in C$ for a.e. $t\in \dom(\gamma)$ and $\mathbb{P}$-a.e. $\gamma\in {\Fr(X)}$, we have that $\mu_\gamma(D)=0$ for $\mathbb{P}$-a.e. $\gamma\in {\Fr(X)}$. Again by the definition of an Alberti representation,
    \[
    0=\int_{\Fr(X)} \mu_\gamma(D) d\mathbb{P}=\mu(D).
    \]
    Thus, $\mu(D)=0$, and the claim follows.
\end{proof}

\section{Subsets of RNP Banach spaces}\label{sec:RNP}

Throughout this section we suppose $(X,\mu)$ is a metric measure space where $\mu$ vanishes on porous sets. Suppose further that $X\subset B$ isometrically for some Banach space $B$, and fix a non-principal ultrafilter $\omega$ on $\N$. Recall that the \emph{ultralimit} of a sequence $(Y_i,\bar y_i)$ of metric spaces is a pointed metric space $(Y_\omega,y_\omega)$ consisting of equivalence classes $[y_i]_\omega$ of sequences $(y_i)$ with $y_i\in Y_i$ and $\sup_id(y_i,\bar y_i)<\infty$, where $(y_i)\sim(z_i)$ if $\lim_\omega d(y_i,z_i)=0$. Here $\lim_\omega$ is the ultralimit of the sequence of real numbers along the principal ultrafilter $\omega$. See e.g. \cite{pasqualettoschultz} for discussion on this terminology and some references. The quantity
\begin{align*}
    d_\omega([y_i]_\omega,[z_i]_\omega)=\lim_\omega d(y_i,z_i)
\end{align*}
defines a metric on $Y_\omega$, and the basepoint is $y_\omega=[\bar y_i]_\omega$. If $B$ is a Banach space and $(Y_i,\bar y_i)=(B,0)$ for all $i\in\N$ is a constant sequence, the ultralimit is called the \emph{ultrapower} and denoted $B^\omega$. $B^\omega$ is a Banach space and there is a canonical isometric  embedding 
\[
B\hookrightarrow B^\omega,\quad v\mapsto [v,v,\ldots]_\omega.
\]
The embedding is surjective, i.e. $B=B^\omega$, if and only if $B$ is finite dimensional. Moreover, a bounded linear map $L:B\to V$ between Banach space admits an ultrapower $L^\omega:B^\omega\to V^\omega$, which is a bounded linear map given by 
\begin{align*}
L^\omega([v_i]_\omega):=[L v_i]_\omega    
\end{align*}
(If $V$ is finite dimensional, we regard $L^\omega$ as a map $B^\omega\to V$, since $V^\omega$ is isomorphic to $V$, with the isomorphism given by $[v_i]_\omega \to \lim_\omega v_i$, where this limit can be evaluated componentwise.) We often implicitly identify (subsets of) $B$ with it's image in $B^\omega$ under the canonical embedding. In particular we may regard $T_\mu(x)\subset B\hookrightarrow B^\omega$ as a subspace of $B^\omega$.

For $\mu$-a.e. $x\in X$ any $(Y,\nu,o)\in \Tan(X,\mu,x)$ is doubling (and thus proper), see Proposition \ref{prop:porous-tangent}. If the convergence $T_{x,r_j}X\stackrel{pmGH}{\longrightarrow}Y$ is realised by a sequence of scales $r_j\to 0$, then $(Y,o)$ is isometric to the ultralimit of the sequence $(r_j\inv X,x)$, see e.g. \cite[Corollary 9.3]{pasqualettoschultz}. Thus the isometric embeddings
\[
\iota_j:(r_j\inv X,x)\to B\subset B^\omega,\quad y\mapsto \frac{y-x}{r_j}
\]
have an $\omega$-limit 
\[
\iota_Y:(Y,o)\to B^\omega,\quad y\mapsto \Big[\frac{y-x}{r_j}\Big]_\omega
\]
which is an isometric embedding. Thus we may regard any tangent $(Y,\nu,o)\in\Tan(X,\mu,x)$ as a subset of $B^\omega$ for $\mu$-a.e. $x\in X$. We fix this setup and use the observation above in the following subsections without further mention.

The aim of this section is to study tangents of subspaces of an RNP Banach space and how they partially inherit the linear structure. We present now the main result of this section.
\begin{theorem}\label{thm:tang=prod}
Let $B$ be a Banach space with the RNP and $(X,\mu )$ a metric measure space in $B$ such that $\mu$ vanishes on porous sets. Suppose $(U,\varphi)$ is an $n$-dimensional fragment-wise chart of $(X,\mu)$. Then for $\mu$-a.e. $x\in U$ the following holds: For any $(Y,\nu,o)\in \Tan(X,\mu,x)$ and any projection $p:B^\omega\to T_\mu(x)$, there exists a set $Z\subset \ker(p)$ and a doubling measure $\nu_Z$ on $Z$ so that 
\begin{itemize}
    \item[(i)] $(Y,o)=(Z+T_\mu(x),0)$ as pointed metric spaces;
    \item[(ii)] $\nu=\nu_Z\times \leb^n$.

\end{itemize}
\end{theorem}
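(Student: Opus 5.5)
The plan is to show that at a.e. $x\in U$ every tangent $Y\subset B^\omega$ is invariant under translations by $T_\mu(x)$, and that $\nu$ is invariant under these translations as well. The splitting in (i) and (ii) then follows from linear algebra plus uniqueness of Haar measure on the fibres.

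\textbf{Step 1: translation invariance along good fragments.} By Proposition \ref{prop:linear-part-of-blowup}, $T_\mu(x)$ is $n$-dimensional. By Lemma \ref{lem:densityofdir}, for $\mu$-a.e. $x$ the derivatives $\gamma'_t$ of fragments through $x$ are dense in $T_\mu(x)$. The key step is to show this is not only true at $x$ but at $\nu$-typical points of the blow-up. Concretely:
\begin{itemize}
\item[(a)] Fix a cone $C\subset\R^n$ and use the Alberti representation of $\mu|_U$ in the $\varphi$-direction of $C$ from \cite[Corollary 3.95]{sch16b}, with fragments whose derivative lies in $T_\mu$ by Proposition \ref{prop:linear-part-of-blowup}(ii).
\item[(b)] Apply Lusin's theorem so that on a compact $K\subset U$ of almost full measure the bundle $T_\mu$ is continuous and the fragments are uniformly differentiable, with a uniform rate $o(|s-t|)$ on large subsets.
\item[(c)] At density points $x$ of $K$, for $\nu$-a.e. $y\in Y$ (represented by points $y_j\in r_j^{-1}X$ in $K$), the rescaled fragments through $y_j$ converge to segments $y+[0,s]v$ with $v\in T_\mu(x)$ lying in the chosen cone.
\end{itemize}
Running over countably many cones and using density of directions yields $y+T_\mu(x)\subset Y$ for all $y$ in a $\nu$-full subset, hence for all $y\in\spt\nu=Y$, since $Y$ is closed. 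Measure invariance $\tau_{v\#}\nu=\nu$ for $v\in T_\mu(x)$ is argued similarly. The Alberti representation disintegrates $\mu$ along fragments with densities, and at Lebesgue points of those densities the blown-up fibre measures become Lebesgue measure on the lines. So $\nu$ disintegrates along lines in direction $v$ with a constant multiple of $\leb^1$, which gives invariance.

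\textbf{Step 2: splitting.} Take $Z:=Y\cap\ker p$. Since $p$ is a projection onto $T_\mu(x)$, every $y\in Y$ writes as $y=(y-p(y))+p(y)$ with $p(y)\in T_\mu(x)$. By the invariance from Step 1, $y-p(y)\in Y\cap\ker p=Z$, so $Y=Z+T_\mu(x)$ and the sum is direct. The metric is the one inherited from $B^\omega$, which gives (i) as pointed metric spaces, since $0=o\in Z$.

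Define $\nu_Z$ as the pushforward of $\nu|_{p^{-1}(Q)}$ under $y\mapsto y-p(y)$, normalised by $\leb^n(Q)$, where $Q\subset T_\mu(x)$ is a unit cube. Translation invariance of $\nu$ in the $T_\mu(x)$ directions forces each conditional measure on a fibre $z+T_\mu(x)$ to be a Haar measure, i.e. a multiple of $\leb^n$. This yields $\nu=\nu_Z\times\leb^n$. Doubling of $\nu_Z$ follows from doubling of $\nu$ (Proposition \ref{prop:porous-tangent}) via $\nu(B_Z(z,r)+B_{T}(0,r))\approx\nu_Z(B_Z(z,r))\,r^n$ up to constants depending on $p$.

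\textbf{Main obstacle.} The hard part is Step 1(c): passing from differentiability of individual fragments at $x$ to the existence of full lines through $\nu$-almost every point of the tangent, uniformly enough to survive the ultralimit. I would control this with the Lusin/Egorov uniformisation, combined with the gap control of fragments near density points. Measure vanishing on porous sets is what lets fragment points fill segments densely at small scales. Measure invariance is the second delicate point, and I would try first to derive it from the Alberti representation densities as sketched above.
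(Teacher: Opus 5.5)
Your set-invariance argument and your Step 2 are sound and essentially follow the paper. The paper obtains $\iota_Y(y)+T_\mu(x)\subset\iota_Y(Y)$ for every $y\in Y$ in three steps: it makes Lemma \ref{lem:decomp-bundle-subset-of-tangent} uniform on compact pieces where $T_\mu$ oscillates by at most $m^{-2}$; that lemma is density of directions, i.e.\ your cone-wise Alberti representations. It then represents points of $Y$ by points of such a piece. Your $Y\cap\ker p$ coincides with the paper's $(\id-p)(\iota_Y(Y))$. Deducing $\nu(B+A)=c_B\leb^n(A)$ from translation invariance via uniqueness of Haar measure is a legitimate, slightly cleaner, substitute for the paper's coordinate-by-coordinate computation of $\nu_B$. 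The gap is the measure invariance $\tau_{v\#}\nu=\nu$. All of (ii) rests on it, and it does not follow \emph{similarly}.

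Concretely, three ingredients are missing.

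(1) Lebesgue points of $\ud\mu_\gamma/\ud\Ha^1$ are a property of single fragments. By contrast, $\nu$ is the limit of $\mu/c_\mu(x,r_j)$ at the one point $x$, and it sees the whole family of fragments meeting $B(x,Rr_j)$. You need a blow-up theorem for Alberti representations: renormalised measures on rescaled fragments, tightness, control of gaps and densities uniform with respect to $\P\otimes\mu_\gamma$, and no loss of mass. The paper imports this from \cite[Corollary 3.14]{sch16a} (cf.\ \cite[Theorem 7.18]{CKS}).

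(2) A representation in the direction of a cone $C$ blows up to lines whose directions spread over $T_\mu(x)\cap p^{-1}(C)$, and such a superposition is not $\tau_v$-invariant. You need lines with $(\psi\circ\gamma)'\equiv v$ exactly, where $\psi=p^\omega|_Y$ is the blow-up of the chart $p$; Schioppa's corollary provides this. Alternatively, prove an $O(s\varepsilon)$ approximate-invariance bound for cones of width $\varepsilon$, along the same scales $r_j$, and let $\varepsilon\to0$.

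(3) The limit lines are a priori only geodesic lines in $Y$ whose $T_\mu(x)$-part is $w+tv$. In a non-strictly convex norm such a line can have non-constant $\ker p$-component: take $t\mapsto(t,f(t))$ in $\R^2$ with the sup norm and $f$ $1$-Lipschitz. Then $\Ha^1$ on it is not $\tau_v$-invariant. The paper needs Proposition \ref{prop:const z-component} to rule this out. It uses $\gamma'_t\in\bar C(T_\mu(x),\varepsilon^2)$ on the compact piece, together with lower semicontinuity of the length of the $\ker p$-component of the rescaled fragments, to show that component is constant. Your idea of getting straightness directly from RNP-differentiability of the fragments could replace that appendix. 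It only works, however, once uniform differentiability is arranged over the whole family with respect to $\P\otimes\mu_\gamma$, i.e.\ as part of (1), and not fragment by fragment as in your Step 1(c).
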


\subsection{Infinitesimal geometry and the decomposability bundle}

In an RNP Banach space, the decomposability bundle must be asymtotically close to the set. This is shown in the following Lemma. In what follows, for $R>0$ we denote $B(R):=\{ v\in B:\|v\| < R\}$ and $S(R):=\{v\in B : \|v\|=R\}$ the ball and sphere of radius $R$ in $B$, respectively. 

\begin{lemma}\label{lem:decomp-bundle-subset-of-tangent}
For $\mu$-a.e. $x\in X$ the following holds: for any $\varepsilon,R>0$ there exists $r_0=r(x,\varepsilon,R)>0$ so that 
\begin{align*}
\sup_{v\in T_\mu(x)\cap B(R)}\dist\Big(v,\frac{X-x}{r}\Big)\le \varepsilon
\end{align*}
whenever $r\in (0,r_0)$.
\end{lemma}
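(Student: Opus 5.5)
The plan is to reduce the statement to finitely many directions and realize each one by a single curve fragment through $x$. I work at points $x$ where $T_\mu(x)$ is finite dimensional. This is the setting of Proposition \ref{prop:linear-part-of-blowup} and of the rest of this section, where $(U,\varphi)$ is a fragment-wise chart; if $T_\mu(x)$ could be infinite dimensional, the compactness step below would fail. At such points I also assume the conclusion of Lemma \ref{lem:densityofdir} holds. Fix such an $x$ and $\varepsilon,R>0$.

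\emph{Step 1: reduce to a finite net.} The function $v\mapsto \dist\big(v,\tfrac{X-x}{r}\big)$ is $1$-Lipschitz. The set $T_\mu(x)\cap \overline{B(R)}$ is compact because $T_\mu(x)$ is finite dimensional. Choose a finite $\varepsilon/3$-net $v_1,\dots,v_m$ of it. It then suffices to find, for each $j$, a radius $r_j>0$ with
\[
\dist\Big(v_j,\tfrac{X-x}{r}\Big)\le \tfrac{2\varepsilon}{3}\quad\text{for all } r<r_j,
\]
and to set $r_0=\min_j r_j$.

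\emph{Step 2: realize each direction by a fragment.} Fix $j$ with $v_j\neq 0$; the case $v_j=0$ is trivial since $0\in (X-x)/r$. By Lemma \ref{lem:densityofdir} there is $\gamma\in\Fr(X)$ and a Lebesgue density point $t$ of $\dom(\gamma)$ with $\gamma_t=x$, such that $\gamma'_t$ exists and $\|\gamma'_t-\lambda v_j\|\le \varepsilon/(3|\lambda|^{-1}\cdot 3)$ after rescaling. Precisely, by density of the set of such derivatives in $T_\mu(x)$, I pick $w=\gamma'_t$ with $\|w-v_j\|\le \varepsilon/9$, reparametrizing linearly if needed.

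\emph{Step 3: approximate $v_j$ at small scales.} Let $L$ be the Lipschitz constant of $\gamma$. Because $t$ is a density point, for every $\eta>0$ there is $\rho>0$ such that for $r<\rho$ some $s\in\dom(\gamma)$ satisfies $|s-(t+r)|\le \eta r$. Differentiability at $t$ gives
\[
\gamma_s-x=(s-t)w+o(|s-t|).
\]
Hence
\[
\Big\|\tfrac{\gamma_s-x}{r}-w\Big\|\le \eta\|w\|+\tfrac{o(r)}{r},
\]
which is below $\varepsilon/9$ once $\eta$ and $r$ are small. Since $\tfrac{\gamma_s-x}{r}\in \tfrac{X-x}{r}$, this yields
\[
\dist\Big(v_j,\tfrac{X-x}{r}\Big)\le \tfrac{2\varepsilon}{9}<\tfrac{2\varepsilon}{3}
\]
for all small $r$, which defines $r_j$.

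I expect the main care to go into Step 2, using Lemma \ref{lem:densityofdir} correctly. The derivative must be taken at a density point of $\dom(\gamma)$, so that the gaps of the fragment do not obstruct Step 3. The density of the set of derivatives must also include magnitudes, not just directions; this comes from linear reparametrization of fragments. The uniformity in $v$ is otherwise handled entirely by the finite net.
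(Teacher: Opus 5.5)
Your proposal is correct and follows essentially the paper's argument: restrict to points where $T_\mu(x)$ is finite dimensional and Lemma \ref{lem:densityofdir} holds, take a finite net of $T_\mu(x)\cap B(R)$, approximate each net point by a derivative $\gamma'_t$ of a fragment through $x$ at a density point of $\dom(\gamma)$, and use the density point to find $s\in\dom(\gamma)$ within $\eta r$ of $t+r$. The differences are only cosmetic. The paper takes its net directly among the fragment derivatives and compares the scalings $r$ and $\tilde r$ via the Lipschitz bound $\|\gamma(\tilde r)-x\|\le L\tilde r$, whereas you use the first-order expansion at $t$; also, the garbled inequality at the start of your Step 2 should simply be replaced by the ``precisely'' sentence that follows it.
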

\begin{proof}
 By the density of directions (Lemma \ref{lem:densityofdir}) and linearly reparametrizing, the set $$ \{\gamma_0': \gamma_0=x,\ 0\mbox{ Lebesgue density point of }\dom(\gamma),\ |\gamma_0'|=1\}$$ is dense in $S(1)\cap T_\mu(x)$ for $\mu$-a.e. $x\in X$. It follows that 
$$ \{\gamma_0': \gamma_0=x,\ 0\mbox{ Lebesgue density point of }\dom(\gamma),\ |\gamma_0'|\le R\}$$ 
is dense in $T_\mu(x)\cap B(R)$. Let $\delta=\varepsilon/4$ and let $\{\gamma_1'(0),\ldots,\gamma_M'(0)\}$ be a $\delta$-net in $T_\mu(x)\cap B(R)$. Denote $L=\max_{l=1,\ldots,M}\LIP(\gamma_l)$. There exists $r_0=r_0(x,\varepsilon,R)$ such that 
\begin{align*}
    &\Big\|\gamma_l'(0)-\frac{\gamma_l(r)-x}{r}\Big\|<\delta,\quad r\in \dom(\gamma_l)\cap (0,r_0),\quad l=1,\ldots,M\\
    &\frac{\leb^1([-r,r]\setminus \dom(\gamma_l))}{r}<\delta/L,\quad r\in (0,r_0), ,\quad l=1,\ldots,M.
\end{align*}
Now let $v\in T_\mu(x)\cap B(R)$ and $r\in (0,r_0)$. By the conditions above we find $l\in \{1,\ldots,M\}$ with $\|v-\gamma_l'(0)\|<2\delta$, and $\tilde r\in \dom(\gamma_l)$ with $|\tilde r-r|<\delta r/L$. Note that
\begin{align*}
\Big\|\frac{\gamma_l(\tilde r)-x}{\tilde r}-\frac{\gamma_l(\tilde r)-x}{r}\Big\|=\|\gamma_l(\tilde r)-x\|\frac{|r-\tilde r|}{\tilde r r}\le L\tilde r\cdot \frac{\delta}{\tilde r L}=\delta
\end{align*}

Thus
\begin{align*}
\dist(v,r\inv(X-x))\le \Big\|v-\frac{\gamma_l(\tilde r)-x}{r}\Big\|\le &\|v-\gamma_l'(0)\|+\Big\|\gamma_l'(0)-\frac{\gamma_l(\tilde r)-x}{\tilde r}\Big\| \\
&+\Big\|\frac{\gamma_l(\tilde r)-x}{\tilde r}-\frac{\gamma_l(\tilde r)-x}{r}\Big\|\\
\le &2\delta+\delta+\delta=\varepsilon,
\end{align*}
completing the proof.
\end{proof}

The following lemma shows that the rescalings $r\inv(X-x)$ of $X$ are close to a (non-Euclidean) product whose one factor is the decomposability bundle in the Hausdorff sense. This will be useful in the proof of Theorems \ref{thm:le donne} and \ref{thm:tang=prod}, as well as Proposition \ref{prop:const z-component}. This result is similar to \cite[Proposition 2.9]{delninmerlo} and \cite[Theorem 1.2]{davideb20}.

\begin{lemma}\label{lem:e-close-to-product}
Let $(X,\mu)\subset B$ be a metric measure space where $\mu$ vanishes on porous sets. Then for $\mu$-a.e. $x$ the following holds: given a projection $p_x:B\to T_\mu(x)$, there are sets $Z_\rho\subset \ker p_x$, $\rho>0$, so that 
\begin{align}
&\sup\{\dist(y,r\inv Z_{rR}+T_\mu(x)):\ y\in r\inv(X-x)\cap B(R)\}\stackrel{r\to 0}{\longrightarrow} 0\label{eq:e1}\\
&\sup\{\dist(z,r\inv(X-x)):\ z\in (r\inv Z_{rR}+T_\mu(x))\cap B(R)\}\stackrel{r\to 0}{\longrightarrow} 0\label{eq:e2}
\end{align}
for all $R>0$.
\end{lemma}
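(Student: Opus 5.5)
The plan is to take the most naive choice of $Z_\rho$ and let the two excess estimates carry the work. Fix $x$ and the projection $p_x$, and put $q_x:=\id-p_x$, which maps $B$ onto $\ker p_x$. Define
\[
Z_\rho:=q_x\big((X-x)\cap B(C\rho)\big),
\]
where $C$ depends only on $\|p_x\|$; for instance $C=2(1+\|p_x\|)$.

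With this choice \eqref{eq:e1} is automatic. Every $y\in r\inv(X-x)\cap B(R)$ splits as $y=q_xy+p_xy$ with $p_xy\in T_\mu(x)$, and $q_xy\in r\inv Z_{rR}$. So the excess in \eqref{eq:e1} is identically $0$.

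All the content is in \eqref{eq:e2}. A point $z\in(r\inv Z_{rR}+T_\mu(x))\cap B(R)$ has the form $z=y+u$, where $y\in r\inv(X-x)\cap B(CR)$ and $u:=v-p_xy\in T_\mu(x)$ has $\|u\|\le C'R$. We must show that $y+u$ lies within $o(1)$ of $r\inv(X-x)$, uniformly in $y$ and $u$. In unscaled terms this says: for every $y_0\in X\cap B(x,CRr)$ and every $w\in T_\mu(x)$ with $\|w\|\le C'Rr$, the point $y_0+w$ is $o(r)$-close to $X$. This is a uniform version of Lemma \ref{lem:decomp-bundle-subset-of-tangent}, applied at points $y_0$ near $x$ rather than only at $x$. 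I will assume $T_\mu$ is finite dimensional a.e.; this holds in the intended application by Proposition \ref{prop:linear-part-of-blowup}, and it is also implicit in Lemma \ref{lem:decomp-bundle-subset-of-tangent}.

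The uniformization goes in three steps.
\begin{itemize}
\item[(a)] Fix $\varepsilon>0$. By Lusin's theorem and Egorov-type arguments, choose a compact $K\subset X$ of almost full measure on which $y\mapsto T_\mu(y)$ is continuous in the Grassmannian (bounded Hausdorff distance of unit balls). On $K$ we also want the radius $r(y,\varepsilon,R')$ from Lemma \ref{lem:decomp-bundle-subset-of-tangent} to be bounded below, for all $R'$ in a countable set; this uses measurability of $y\mapsto r(y,\varepsilon,R')$ after taking a measurable minorant.
\item[(b)] Use that $\mu$ vanishes on porous sets. For each rational $\eta>0$, the set of $x\in K$ at which $K$ is $\eta$-porous relative to $X$ is porous, hence $\mu$-null. (Relative porosity here means there are $y_i\in X$ with $y_i\to x$ and $\dist(y_i,K)\ge\eta\|y_i-x\|$.) So for $\mu$-a.e. $x\in K$ and all small $r$, every $y_0\in X\cap B(x,CRr)$ has some $y_1\in K$ with $\|y_0-y_1\|\le\eta CRr$.
\item[(c)] At such a $y_1$, continuity of $T_\mu$ on $K$ gives $w'\in T_\mu(y_1)$ with $\|w-w'\|\le\varepsilon\|w\|$. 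The uniform version of Lemma \ref{lem:decomp-bundle-subset-of-tangent} at $y_1$, with $R'\approx 2C'R$, then gives $\dist(y_1+w',X)\le\varepsilon r$.
\end{itemize}
Combining (a)–(c) yields $\dist(y_0+w,X)\lesssim(\eta+\varepsilon)Rr$. Finally let $\varepsilon,\eta\to0$ along countable sequences and exhaust $X$ by the sets $K$.

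The main obstacle is step (a). We need a single uniform lower bound on $r_0(y,\varepsilon,R)$ and simultaneous continuity of $T_\mu$ on a large set, and this requires checking that the quantities from Lemma \ref{lem:decomp-bundle-subset-of-tangent} depend measurably on $y$. If that is awkward, I would instead replace $r_0$ by the measurable function
\[
\sup\Big\{s:\ \sup_{v\in T_\mu(y)\cap B(R)}\dist\big(v,t\inv(X-y)\big)\le\varepsilon\ \text{for all } t<s\Big\},
\]
whose measurability follows from the Borel measurability of $T_\mu$ and the closedness conditions involved.
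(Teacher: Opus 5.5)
Your proposal is correct and is essentially the paper's proof. The paper also takes $Z_\rho=(\id-p_x)(B_X(x,\rho)-x)$, so that \eqref{eq:e1} is trivial; your enlargement by the factor $C$ is harmless but unnecessary. It then proves \eqref{eq:e2} the same way you do: it partitions into compact sets on which $T_\mu$ is nearly constant and the convergence in Lemma \ref{lem:decomp-bundle-subset-of-tangent} is uniform, and uses non-porosity at $x$ to pass from points of $X\cap B(x,rR)$ to nearby points of the compact piece. The measurability point you flag in step (a) and your finite-dimensionality assumption are both left implicit in the paper as well.
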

In the proof of this Lemma as well as the next Theorem we  denote by $\alpha(V,W)$ the Hausdorff distance of the sets $V\cap B(1)$ and $W\cap B(1)$: 
\[
\alpha(V,W)=\sup_{w\in W\cap B(1)}\inf_{v\in V} d(v,w) + \sup_{v\in V\cap B(1)}\inf_{w\in W} d(v,w).
\]
\begin{proof}
Define $Z_\rho=(\id-p_x)(B_X(x,\rho)-x)$. Then $r\inv(X-x)\cap B(R)\subset r\inv Z_{rR}+T_\mu (x)$. Thus 
\begin{align*}
    e_{B(R)}(r\inv(X-x),r\inv(Z_{rR}+T_\mu (x)))=0,
\end{align*}
proving \eqref{eq:e1}. Here, recall the definition of the excess $e_{B(R)}$ given in \eqref{eq:excess}.

To prove \eqref{eq:e2} let, for each $m\in \N$, $\{K_j^m\}_{j\in \N}$ be a partition of $X$ (up to a $\mu$-null set) into compact sets so that for each $j$:
\begin{itemize}
    \item[(1)] $\alpha(T_\mu(x),T_\mu(y))\le m^{-2}$ for all $x,y\in K_j^m$;
    \item[(2)] the convergence $e_{B(m)}(T_\mu(x),r\inv(X-x))\stackrel{r\to 0}{\longrightarrow} 0$ is uniform in $K^m_j$, cf. Lemma \ref{lem:decomp-bundle-subset-of-tangent}.
\end{itemize}

Observe that $\mu$-a.e. $x\in U$ has the property that, for every $m\in\N$ there exists $j_m$ so that $x$ is a Lebesque density point and a point of non-porosity of $K_j^m$ for $j=j_m$. We fix such $x$, and let $\varepsilon>0$ be arbitrary. For any $m$ and $y\in K^m_{j_m}$ we have $T_\mu(x)\cap B(m)\subset C(T_\mu(y),m^{-2})$ by (1), while by (2) there exists $r_0$ so that $T_\mu(y)\cap B(m)\subset B(r\inv(X-y),\varepsilon)$ for $r\in (0,r_0)$. Together these inclusions imply that $T_\mu(x)\cap B(m)\subset B(r\inv(X-y),\varepsilon+m\inv)$ for $r\in (0,r_0)$, which can be rewritten as
\begin{align}\label{eq:e3}
    \frac{y-x}{r}+T_\mu(x)\cap B(m)\subset B(r\inv(X-x),\varepsilon+m\inv),\quad r\in (0,r_0).
\end{align}
Since $K_{j_m}^m$ is $\varepsilon r$-dense in $X\cap B(x,rm)$ for small enough $r$, there exists $r_1\le r_0$ so that \eqref{eq:e3} holds with $2\varepsilon+m\inv$ in place of $\varepsilon+m\inv$ for all $y\in X \cap B(x,rm)$, $r\in (0,r_1)$. It follows that 
\begin{align*}
    (r\inv Z_{rm}+T_\mu(x))\cap B\Big(\frac{m}{1+\|p_x\|}\Big)\subset B(r\inv(X-x),2\varepsilon+m\inv).
\end{align*}
Since $\varepsilon>0$ and $m$ are arbitrary \eqref{eq:e2} follows.
\end{proof}

We can now combine these lemmas to obtain the product structure for tangents. Here, it is also crucial that we obtain the product sructure for the blown up measure $\nu$. Recall the notation $\iota_Y:(Y,o)\to B^\omega$, $y\mapsto \Big[\frac{y-x}{r_j}\Big]_\omega$ and $p^\omega:B^\omega\to V$ from the beginning of the section.

\begin{proof}[Proof of Theorem \ref{thm:tang=prod}]
To prove (i) we argue as in Lemma \ref{lem:e-close-to-product}. For each $m$, let $\{K_j^m\}$ be a partition of $U$ (up to a $\mu$-null set) into compact sets so that for each $j$:
\begin{itemize}
    \item[(1)] $\alpha(T_\mu(x),T_\mu(y))\le m^{-2}$ for all $x,y\in K_j^m$;
    \item[(2)] the convergence $e_{B(m)}(T_\mu(x),r\inv(X-x))\stackrel{r\to 0}{\longrightarrow} 0$ is uniform in $K^m_j$, cf. Lemma \ref{lem:decomp-bundle-subset-of-tangent}.
\end{itemize}

For $\mu$-a.e. $x\in U$ and every $m\in\N$ there exists $j_m$ so that $x$ is a Lebesque density point and a point of non-porosity of $K_j^m$ for $j=j_m$. We fix such $x$. If $(Y,\nu,o)\in\Tan(X,\mu,x)$ is realised by a sequence of scales $r_i\downarrow 0$, then for any $y\in Y$ there exists a sequence $(y_i)\subset K^m_{j_m}$ representing $y$; in particular $\iota_Y(y)=[r_i\inv(y_i-x)]_\omega$. By (2), for any $\varepsilon>0$ there exists $r_0>0$ so that
\begin{align*}
T_\mu({y_i})\cap B(m)\subset B(r\inv(X-y_i),\varepsilon),\quad r<r_0.
\end{align*}
By (1) we have $T_\mu(x)\cap B(m)\subset C(T_\mu({y_i}),1/m^2)$, and together these inclusions imply 
\begin{align*}
    T_\mu(x)\cap B(m)\subset B(r\inv(X-y_i),\varepsilon+m^{-1}), r<r_0.
\end{align*}
Thus, for large $i$ we have $\frac{y_i-x}{r_i}+T_\mu(x)\cap B(m)\subset B(r_i\inv(X-x),\varepsilon+m\inv)$. Since $m$ and $\varepsilon$ are arbitrary, this implies 
\begin{align}\label{eq:V_x-invariance}
    \iota_Y(y)+T_\mu(x)\subset \iota_Y(Y)\quad \mbox{ for any }\;y\in Y. 
\end{align}
Now let $p:B^\omega\to T_\mu(x)$ be any linear projection, and set $Z:=(\id-p)(\iota_Y(Y))$. Since $\iota_Y(y)=(\id-p)(\iota_Y(y))+p(\iota_Y(y))\in Z+T_\mu(x)$ for every $y\in Y$ we have $\iota_Y(Y)\subset T_\mu(x)+Z$. Conversely, if $\bar y\in T_\mu(x)+Z$, there is $v\in T_\mu(x)$ and $y\in Y$ such that $\bar y=v+(\id-p)(\iota_Y(y))$. Since $v-p(\iota_Y(y))\in T_\mu(x)$, it follows from \eqref{eq:V_x-invariance} that $\bar y =\iota_Y(y)+v-p(\iota_Y(y))\in \iota_Y(Y)$,  proving $T_\mu(x)+Z\subset \iota_Y(Y)$. Since $\iota_Y:Y\to \iota_Y(Y)$ is an isometric homeomorphism, this proves (i).

Let $x$ be a density point of a compact set $K\subset U$ so that the decomposability bundle of $\mu$ restricted to $K$ is continuous. Then, for any projection $p_x:B\to T_\mu(x)$ there exists $r_0$ so that $K_x:=K\cap \bar B(x,r_0)$ has almost full $\mu$-measure in $B(x,r_0)$ and $(K_x,p_x)$ is a fragment-wise chart (of dimension $n$), cf. Lemma \ref{lem:proj-are-charts}. Let $(Z+T_\mu(x),\nu,z_0)\in \Tan(X,\mu,x)$ where $Z\subset \ker(p^\omega_x)$. Then it is direct to show by identifying an ultralimit with the pmGH-tangent (cf. \cite[Corollary 9.3]{pasqualettoschultz}) that the blow-up of $p_x$ at $x$ is the restriction of the ultralimit $p_x^\omega:B^\omega\to T_\mu(x)$ to $Z+T_\mu(x)$, i.e. $\psi=p_x^\omega|_{Z+T_\mu(x)}:Z+T_\mu(x)\to T_\mu(x)$; thus 
\begin{align*}
    \psi(z+v)=v,\quad z+v\in Z+T_\mu(x).
\end{align*}
Fix a basis $\{v_1,\ldots,v_n\}$ of $T_\mu(x)$ consisting of unit vectors. By \cite[Corollary 3.14]{sch16a}, for each $v_i$, $i=1,\ldots,n$, there exists a measure $\P_i$ on the set of geodesic lines $\gamma: \R\to Z+T_\mu(x)$ with $(\psi\circ\gamma)'\equiv v_i$ so that 
\begin{align}\label{eq:AR}
\nu(A)=\int \Ha^1(\im(\gamma)\cap A)\ud\P_i(\gamma),\quad A\subset Z+T_\mu(x)\; \mbox{ Borel.}
\end{align}
Denoting $W_i=\operatorname{span}\{v_l: l\ne i\}$ and post-composing the geodesic lines $\gamma$ with suitable translations we may assume that $\P_i$ is concentrated on the space 
\begin{align*}
G_i:=\{\gamma:\R\to Z+T_\mu(x)\mbox{ geodesic line with }(\psi\circ\gamma)'\equiv v_i,\ \psi(\gamma_0)\in W_i\}
\end{align*}
For each $\gamma=(\gamma_Z,\gamma_x)\in G_i$, we have $\gamma_x(t)=w_\gamma+v_it$. Moreover, by Proposition \ref{prop:const z-component} we have that $\gamma_Z$ is constant for $\P_i$-a.e. $\gamma$.

Fix a bounded Borel set $B\subset Z$ and consider the measure $\nu_B:=p_{x\ast}(\nu|_{B\times T_\mu(x)})$ on $T_\mu(x)$. We will identify $T_\mu(x)$ with $\R^n$ through the basis $\{v_1,\ldots,v_n\}$, and write $E\times [a,b]$ for $E+v_i[a,b]$ for a given Borel set $E\subset W_i$ and $a<b$. Using \eqref{eq:AR} we have
\begin{align*}
\nu_B(E\times[a,b])&=\int \Ha^1(B+(E\times [a,b])\cap\im(\gamma))\ud\P_i\\
&=(b-a)\int\chi_{e_0\inv(B+E)}\ud\P_i=(b-a)e_{0\ast}\P_i(B+E).
\end{align*}
It follows that $\nu_B$ is a product measure $\nu_B=\leb^1\times\sigma_i^B$ where $\sigma_i^B$ is the measure on $W_i$ defined by $\sigma_i^B(E)=e_{0\ast}\P_i(B+E)$.

For distinct $i,j\in \{1,\ldots,n\}$, let $W_{ij}=W_i\cap W_j$. For $E\subset W_{ij}$ and $A_i\subset \operatorname{span}\{v_i\}$, $A_j\subset \operatorname{span}\{v_j\}$ we then have 
\begin{align*}
\nu_B(E\times A_i\times A_j)=\sigma_i^B(E\times A_j)\leb^1(A_i)=\sigma_j^B(E\times A_i)\leb^1(A_j).
\end{align*}
It follows that 
\begin{align*}
    \sigma_{ij}^B(E):=\frac{\nu_B(E\times A_i\times A_j)}{\leb^1(A_i)\leb^1(A_j)}, \quad E\subset W_{ij}
\end{align*}
is independent of the choice of $A_i,A_j$ and defines a measure on $W_{ij}$ satisfying $\nu_B=\sigma_{ij}^B\times \leb^1\times \leb^1$. Repeating this argument we obtain
\begin{align*}
    \nu_B=c_B\leb^1\times\cdots\times\leb^1=c_B\leb^n
\end{align*}
for each bounded Borel $B\subset Z$. Since $\nu_B$ and $\leb^n$ are measures it follows that $B\mapsto c_B$ can be extended as a measure on $Z$, which we denote $\nu_Z$. It satisfies 
\begin{align*}
    \nu(B\times A)=\nu_B(A)=\nu_Z(B)\leb^n(A)
\end{align*}
for all Borel $A\subset T_\mu(x)$, $B\subset Z$, so that $\nu=\nu_Z\times \leb^n$. The fact that $\nu_Z$ is doubling follows from the fact that $\nu$ and $\leb^n$ are doubling measures.
\end{proof}

\section{Rectifiability of LDS subsets of RNP-spaces}\label{sec:extra-factors}

In this section we prove Theorems \ref{thm:LDS-in-RNP} and \ref{thm:le donne}. The technically most involved result in our argument is the following theorem excluding non-trivial extra factors in generic blow-ups, cf. Theorem \ref{thm:tang=prod}. It may be regarded the main result of this section. 

\begin{theorem}\label{thm:no-extra-factors} Let $(X,\mu)$ be an LDS contained in an RNP-Banach space $B$. 
	Let $(U,\varphi)$ be a Cheeger chart of dimension $n$ in $(X,\mu)$. Then for $\mu$-a.e. $x\in U$, every tangent $(Y,\nu,o)\in \Tan(X,\mu,x)$ is of the form $(Y,\nu,o)=(T_\mu(x),c\leb^n,0)$.
\end{theorem}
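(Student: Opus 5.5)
Since an LDS is in particular a space with fragment-wise charts (a Cheeger chart $(U,\varphi)$ is a fragment-wise chart, e.g. via Lemma \ref{lem:chartchar}), Theorem \ref{thm:tang=prod} applies. So for $\mu$-a.e. $x\in U$ every tangent has the form $(Y,\nu,o)=(Z+T_\mu(x),\nu_Z\times\leb^n,0)$ with $Z\subset\ker(p)$ and $\nu_Z$ doubling. It therefore suffices to show that $Z=\{0\}$ for $\mu$-a.e. $x$. Then $\nu_Z$ is a multiple of a Dirac mass and $\nu=c\leb^n$. I argue by contradiction: suppose there is a set $A\subset U$ of positive measure such that at every $x\in A$ some tangent has $Z\ne\{0\}$. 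I will then build a real-valued Lipschitz function $f$ on $X$ that is not Cheeger-differentiable with respect to $\varphi$ on a positive-measure subset of $A$.

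First I reduce the chart to a linear one. By Lemma \ref{lem:proj-are-charts}, locally $(K,p_x)$ is a chart, and in a Cheeger chart the blow-ups of $\varphi$ and of $p_x$ are linearly related. Consequently, at a generic point the blow-up of any $f\in\LIP(X)$ must be a linear function of $\psi=p^\omega_x|_{Z+T_\mu(x)}$. In particular it factors through the projection onto $T_\mu(x)$ and is constant along every translate $z+T_\mu(x)$, $z\in Z$. This is the standard fact that blow-ups of functions on an LDS are "generalized linear" with respect to the chart.

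The contradiction needs a function whose blow-ups genuinely vary in the $Z$-direction. I construct $f$ by Schioppa's tile-function construction \cite{sch16a}, carried out at many scales. Fix a density point $x$ of $A$ at which Lemma \ref{lem:e-close-to-product} holds. The lemma says that $r^{-1}(X-x)\cap B(R)$ is Hausdorff close to $r^{-1}Z_{rR}+T_\mu(x)$, where $Z_\rho=(\id-p_x)(B_X(x,\rho)-x)\subset\ker p_x$. Since $Z\neq\{0\}$ along a subsequence of scales, at those scales $r$ the set $r^{-1}Z_{rR}$ contains a point $z_r$ with $\|z_r\|\gtrsim 1$. I then define a function of the form
\[
f_r(y)=r\,\min\{\dist(y-x,r(z_r+T_\mu(x))),\ \dist(y-x,rT_\mu(x))\},
\]
localized with a cutoff, i.e. a function of $(\id-p_x)(y-x)$ only. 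This function is $1$-Lipschitz and constant along $T_\mu(x)$-slabs, but at scale $r$ it takes different values on the slabs through $0$ and through $z_r$. Its blow-up therefore fails to be a function of $\psi$ alone, even though it vanishes on $T_\mu(x)$, and hence along the Cheeger chart directions.

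The main obstacle is to glue these local bumps into a single $f\in\LIP(X)$ that works at a positive-measure set of points simultaneously, not just at one point $x$. To handle this, I would use a Vitali-type selection of disjoint balls $B(x_k,r_k)$ covering a fixed fraction of $A$ at each of a sparse sequence of scales, and sum the bump functions with geometrically separated scales. The uniform estimates for this come from the compact sets $K^m_j$ in the proof of Lemma \ref{lem:e-close-to-product}: on each such set the convergence there is uniform, so the product structure, and hence the bumps, persist at every point of $K^m_j\cap B(x_k,r_k)$ for scales comparable to $r_k$. For the resulting sum $f$, a positive-measure set of points $y$ has infinitely many scales at which $(f-f(y))/r$ is at least a fixed distance from every function $\lambda\circ\varphi$. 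This contradicts \eqref{eq:LDS-diff}. Hence $Z=\{0\}$ almost everywhere, and $(Y,\nu,o)=(T_\mu(x),c\leb^n,0)$.
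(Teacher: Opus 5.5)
You have the right overall plan, and it matches the paper's: use Theorem \ref{thm:tang=prod} to reduce to showing $Z=\{0\}$, then argue by contradiction with scale-dependent bumps that depend only on the component transverse to $T_\mu$. There is a small error and a genuine gap.

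The small error is in your explicit bump $\min\{\dist(y-x,r(z_r+T_\mu(x))),\dist(y-x,rT_\mu(x))\}$. It vanishes on \emph{both} slabs, so it does not separate them; the prefactor $r$ is also off. The paper's $g=(\varepsilon r_j-\dist_{x+V_0})_+$ is the right kind of function.

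The genuine gap is the gluing step you flag as the main obstacle. Geometric separation of scales only controls the finer-scale bumps, whose sup norms are $\lesssim r_{k'}$. The coarser-scale bumps are each $O(1)$-Lipschitz in the transverse direction. They overlap exactly at the points where you need infinitely many scales, so their transverse variations can add up and $\sum_k f_k$ need not be Lipschitz.

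Take the model $X=\R^n\times C$ with bumps $(\varepsilon r_k-|z-z_k|)_+$ in the Cantor coordinate $z$. For $z_k<z<z'$, every active coarse bump changes by exactly $z'-z$ with the same sign. The Lipschitz constant of the sum is then the number of such bumps. Vitali selection and uniform estimates on $K^m_j$ do not address this, and your sketch never uses the LDS hypothesis at this point. So the claim that the sum $f$ is a Lipschitz function which fails \eqref{eq:LDS-diff} on a positive-measure set is unsupported.

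The missing idea is the tile property (T1): each bump must have small minimal $*$-upper gradient on all but an $\varepsilon$-fraction of its support. The paper gets this in two steps.
\begin{itemize}
\item It fixes a single $n$-plane $V_0$ with $\alpha(T_\mu(y),V_0)<\varepsilon^2$ for \emph{all} $y$ in a compact set $K$. Since fragment velocities lie in $T_\mu(\gamma_t)$, this gives $|Dg|_\ast\le\varepsilon^2$ on $K$ (Proposition \ref{prop:pre-tile}(b)). Vanishing along $T_\mu(x)$ at the centre alone is not enough.
\item It shows that the cutoff annulus $E_j$, where the derivative is only $O(1)$, satisfies $\mu(E_j)\lesssim\varepsilon\mu(S_j)$. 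This uses the product structure $\nu=\nu_Z\times\leb^n$ from Theorem \ref{thm:tang=prod}(ii) together with pointwise doubling (Corollary \ref{cor:pre-tile}).
\end{itemize}
The LDS inequality $\Lip f\lesssim|Df|_\ast$ then applies: partial sums have small differential, so they are nearly flat at later scales on most of the set. This is what allows Schioppa's Theorems 4.1 and 4.3 to assemble the tiles into a Lipschitz function and contradict the LDS property. Your proposal never establishes or uses the small fragment-wise derivative of the bumps, nor the measure bound on the cutoff region. Without them the multi-scale construction, and hence the contradiction, is not justified.
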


Together with the following proposition, Theorem \ref{thm:no-extra-factors} implies Theorems \ref{thm:LDS-in-RNP} and \ref{thm:le donne}.
\begin{proposition}\label{prop:bilip-decomp}
    Let $(X,\mu)$ be an LDS and $(U,\varphi)$ a $k$-dimensional Cheeger chart of $(X,\mu)$. Suppose $\Tan(X,\mu,x)=\{(V_x,c\leb^k,0)\}$ for $\mu$-a.e. $x\in U$, where $V_x$ is a $k$-dimensional Banach space. Then there is a countable disjoint collection $\{U_i\}$ with $\mu(U\setminus\bigcup_iU_i)=0$ so that $\Ha^k|_{U_i}\ll \mu|_{U_i}\ll \Ha^k|_{U_i}$ and $\varphi|_{U_i}$ is bi-Lipschitz.
\end{proposition}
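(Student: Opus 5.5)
The plan is to read off bi-Lipschitz bounds for $\varphi$ and density bounds for $\mu$ from the uniqueness of tangents, and then to make these bounds uniform on countably many pieces by an Egorov-type decomposition.

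\textbf{Step 1: the blow-up of $\varphi$.} At $\mu$-a.e. $x\in U$ the unique tangent is $(V_x,c\leb^k,0)$. Any blow-up $\varphi_\infty$ of $\varphi$ there is a Lipschitz map $V_x\to\R^k$. By the standard fact that blow-ups of Cheeger charts are charts of the tangent for a.e. $x$ (cf. \cite{sch16a} and \cite{bate2013differentiability}), combined with the differentiability of $\varphi$ along the chart, $\varphi_\infty$ is linear. Cheeger-chart uniqueness forces $\varphi_\infty$ to be injective. Indeed, if $\varphi_\infty(v)=0$ for some $v\neq 0$, take $\lambda\in V_x^*$ with $\lambda(v)\neq 0$. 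The limiting charts of the tangent then give two distinct differentials, because $\lambda$ is not a function of $\varphi_\infty$. This contradicts the fact that tangents of LDS are LDS with the blown-up chart for a.e. $x$ (Schioppa). Hence $\varphi_\infty:V_x\to\R^k$ is a linear bijection. Let $\lambda_x>0$ be the constant with $|\varphi_\infty(v)|\ge\lambda_x\|v\|$.

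\textbf{Step 2: from tangent uniqueness to quantitative estimates at small scales.} Uniqueness of the tangent gives pointed GH convergence of $r^{-1}X$ to $V_x$ along every sequence $r\to 0$. Combined with Step 1 this yields the following. For every $\varepsilon>0$ there is $r_x>0$ such that $|\varphi(y)-\varphi(x)|\ge(\lambda_x-\varepsilon)d(x,y)$ whenever $d(x,y)<r_x$. Moreover, $\mu(B(x,r))/r^k\in[c_x(1-\varepsilon),c_x(1+\varepsilon)]\cdot c_\mu(x,r)r^{-k}$ and the ratio $\mu(B(x,r))/\omega_x r^k$ stabilizes. More robustly, measure convergence to $c\leb^k$ on the $k$-dimensional normed space $V_x$ gives $\mu(B(x,2r))/\mu(B(x,r))\to 2^k$. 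Since $B(0,1)\subset V_x$ is bi-Lipschitz to a Euclidean ball, we also get two-sided bounds $a_x r^k\le \mu(B(x,r))\le A_x r^k$ for small $r$, after noting that $c_\mu(x,r)$ is comparable to $\mu(B(x,r))$ by doubling.

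\textbf{Step 3: uniformization and conclusion.} By Lusin/Egorov, partition $U$ up to a null set into Borel sets $U_i$ on which $\lambda_x\ge\lambda_i>0$, $a_i\le a_x$, $A_x\le A_i$, and $r_x\ge r_i$. Subdivide each $U_i$ further into pieces of diameter $<r_i$. On such a piece, for $x,y\in U_i$ we have $d(x,y)<r_x$, so $|\varphi(x)-\varphi(y)|\ge(\lambda_i/2)d(x,y)$. Thus $\varphi|_{U_i}$ is bi-Lipschitz, its upper bound being $\LIP\varphi$. The uniform density bounds $a_ir^k\le\mu(B(x,r))\le A_ir^k$ for $x\in U_i$ and $r<r_i$ give, by the standard density theorems for Hausdorff measure (upper density $\le A_i$ implies $\mu|_{U_i}\ge c\Ha^k|_{U_i}$, and lower density $\ge a_i$ implies $\mu|_{U_i}\le C\Ha^k|_{U_i}$), the mutual absolute continuity $\Ha^k|_{U_i}\ll\mu|_{U_i}\ll\Ha^k|_{U_i}$. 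Finiteness of $\Ha^k|_{U_i}$ follows from bi-Lipschitz equivalence with a subset of $\R^k$.

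The main obstacle is Step 1, namely justifying that the blow-up of $\varphi$ is a linear isomorphism. This requires invoking that blow-ups of Cheeger charts remain charts on the tangent (Schioppa's result that tangents of LDS are LDS, with harmonic-type linearity of blow-ups of chart functions). If that is unavailable, I would instead argue directly: a nonzero kernel vector $v$ of $\varphi_\infty$ produces, via a distance-type function $f=\dist(\cdot,\cdot)$ whose blow-up sees $v$, a Lipschitz $f$ that is not differentiable with respect to $\varphi$ on a positive measure set.
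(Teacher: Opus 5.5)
\textbf{Main gap: Step 2 cannot give the density bounds.} You derive $a_xr^k\le\mu(B(x,r))\le A_xr^k$ from uniqueness of the tangent, and this does not follow. Tangents are formed with the normalized measures $\mu/c_\mu(x,r)$, so the hypothesis $\Tan(X,\mu,x)=\{(V_x,c\leb^k,0)\}$ only controls scale-invariant quantities such as $\mu(B(x,2r))/\mu(B(x,r))\to 2^k$. That is perfectly consistent with $\mu(B(x,r))\approx r^k\log(1/r)$. More generally, tangent information alone can never give absolute continuity: there are measures on $\R$, singular with respect to $\leb^1$, all of whose tangents at a.e. point are multiples of $\leb^1$ (an example of Preiss). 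So your Step 3 yields bi-Lipschitz pieces but not $\mu|_{U_i}\ll\Ha^k|_{U_i}$.

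The missing ingredient is a measure-theoretic consequence of the chart itself. The paper uses that a $k$-dimensional Cheeger chart satisfies $\varphi_*(\mu|_{U_i})\ll\leb^k$ \cite[Theorem 1.1]{de2017conjecture}. Together with the bi-Lipschitz property of $\varphi|_{U_i}$, this gives $\mu|_{U_i}\ll\Ha^k|_{U_i}$. The reverse relation follows after discarding from $U_i$ the $\mu$-null set where the density of $\varphi_*(\mu|_{U_i})$ vanishes. Separately, your density theorems are stated the wrong way round: an upper bound on the upper density gives $\mu\le C\Ha^k$, and a positive lower bound gives $\mu\ge c\Ha^k$.

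\textbf{Second gap: Step 1 is not justified.} Even granting that the blow-up $\varphi_\infty$ is a chart of $(V_x,c\leb^k)$, this does not make it linear. A nonlinear chart need not satisfy any bound $|\varphi_\infty(v)|\ge\lambda\|v\|$. For instance, $t\mapsto\dist(t,2\mathbb{Z})$ is a chart for $(\R,\leb^1)$, since its derivative is $\pm1$ a.e., yet it vanishes at both $t=0$ and $t=2$. So ``differentiability of $\varphi$ along the chart'' does not produce the lower bound you need.

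The paper gets the required rigidity from \cite[Corollary 7.99]{che-kle-sch16}: at a.e. $x$, every blow-up of $\varphi$ is a submetry $V_x\to(\R^k,\|\cdot\|_x)$, hence an isometry since both sides are $k$-dimensional. A sequence $y_j\to x$ with $|\varphi(y_j)-\varphi(x)|=o(d(y_j,x))$ would then blow up to a unit vector in its kernel. Blow-ups of $\varphi$ along different sequences may differ, so this must be applied to all of them. What it yields is $\liminf_{y\to x}|\varphi(y)-\varphi(x)|/d(x,y)>0$, not a bound with a single $\lambda_x$. From that point on, your Lusin--Egorov decomposition into small pieces matches the paper's argument.

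Your fallback, building a non-differentiable $f$ from a kernel vector, is exactly the hard part and is not carried out. The exceptional null set in differentiability depends on $f$, so a single function would have to fail on a set of positive measure.
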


\begin{proof}
Let $(U,\varphi)$ be as in the claim. We claim that 
\begin{align}\label{eq:liminf}
    \liminf_{y\to x}\frac{|\varphi(x)-\varphi(y)|}{d(x,y)}>0\quad\mu\textrm{-a.e. }x\in U.
\end{align}
To see this, let $A\subset U$ be the set of points where \eqref{eq:liminf} fails. For $\mu$-a.e. $x\in A$, every $(\R^k,c\leb^k,0,\psi)\in \Tan(X,\mu,x,\varphi)$ is such that $\psi:\R^k\to (\R^k,\|\cdot\|_x)$ is a submetry \cite[Corollary 7.99]{che-kle-sch16}. Since the domain and target have the same dimension, $\psi$ must be an isometry. For such $x\in A$, let $y_j\to x$ be such that
\begin{align*}
\lim_{j\to\infty}\frac{|\varphi(y_j)-\varphi(x)|}{r_j}=0,
\end{align*}
where $r_j=d(y_j,x)$. The sequence of rescalings $$(r_j\inv X,c(x,r_j)\inv\mu,x,r_j\inv(\varphi-\varphi(x)))$$ pmGH-subconverges to $(\R^k,c\leb^k,0,\psi)\in \Tan(X,\mu,x,\varphi)$ with realizations $\iota_j:X_j\to Z$, $\iota:\R^k\to Z$, and $\iota_j(y_j)\to \iota(y)\in \iota(\R^k)$ with $\|y\|=1$. Thus 
\begin{align*}
    \psi(y)=\lim_{j\to\infty}\frac{|\varphi(y_j)-\varphi(x)|}{r_j}=0,
\end{align*}
which contradicts the fact that $\psi$ is an isometry, and proves \eqref{eq:liminf}.

Now \eqref{eq:liminf} yields that the sets 
\begin{align*}
E_{m,n}=\{x\in U:\ |\varphi(y)-\varphi(x)|\ge d(y,x)/n\ \forall y\in B(x,1/m)\}
\end{align*}
cover $U$ up to a $\mu$-null set. Consequently $\varphi$ is bi-Lipschitz on $E_{m,n}\cap B(x,1/(2m))$ for every $x\in E_{m,n}$. Standard measure theoretic arguments and decomposition into sets with small diameter \cite[Proposition 3.1.1]{kei04'}, or \cite[Lemma 15.13]{mat95},  yield a countable disjoint collection $\{U_i\}$ of subsets with $\mu(U\setminus \bigcup_iU_i)=0$ so that $\varphi|_{U_i}$ is bi-Lipschitz. See also \cite[Corollary 3.2]{lytopen} for a similar argument. Since $(U_i,\varphi)$ is a $k$-dimensional Cheeger chart, we have that $\varphi_\ast(\mu|_{U_i})\ll \leb^k$, cf. \cite[Theorem 1.1]{de2017conjecture}. This and the fact that $\varphi|_{U_i}$ is bi-Lipschitz yield that $\Ha^k|_{U_i}\ll\mu|_{U_i}\ll \Ha^k|_{U_i}$. 
\end{proof}

\begin{proof}[Proof of Theorem \ref{thm:LDS-in-RNP}]
Let $(X,\mu)$ be an LDS and $\iota:X\to Y$ a bi-Lipschitz embedding. 
Then $(X',\mu')=(\iota(X),\iota_\ast\mu)\subset Y$ is an LDS. To prove the claim it suffices to show that $(X',\mu')$ is countably rectifiable. That is, we may assume $(X,\mu)\subset Y$. 

Let $(U,\varphi)$ be a $k$-dimensional Cheeger chart in $(X,\mu)$. By Theorem \ref{thm:no-extra-factors} we have that $\Tan(X,\mu,x)=\{(T_\mu(x),c\leb^k,0)\}$ for $\mu$-a.e. $x\in U$. Proposition \ref{prop:bilip-decomp} now implies that $U$ is $k$-rectifiable. Thus $X$ is countably rectifiable.
\end{proof}

We next present the proof of Theorem \ref{thm:le donne}.

\begin{proof}[Proof of Theorem \ref{thm:le donne}]
Let $(X,\mu)$ be an LDS. By Theorem \ref{thm:no-extra-factors} we have $\mu$-a.e. $x$ that $\Tan(X,\mu,x)=\{(T_\mu(x),c\leb^n,0)\}$ where $n=\dim T_\mu(x)$. Thus uniqueness of the limit implies that $r\inv(X-x)$ pGH converges to $T_\mu(x)$ $\mu$-a.e. $x$. 

Next, assume that $r\inv(X-x)$ pGH-converges to $T_\mu(x)$. By Theorem \ref{thm:tang=prod} and Lemma \ref{lem:e-close-to-product} the sets $r\inv Z_{rR}$ in Lemma \ref{lem:e-close-to-product} pGH-converge to a singleton. Thus they Hausdorff converge to a singleton. Since $r\inv(X-x)\cap B(R)$ is Hausdorff-close to $(r\inv Z_{rR}+T_\mu (x))\cap B(R)$, it follows that $r\inv(X-x)$ Hausdorff converges to $T_\mu(x)$ for $\mu$-a.e. $x\in X$.

Finally, if $r\inv(X-x)$ Hausdorff converges to $T_\mu(x)$, then by Theorem \ref{thm:tang=prod} we have $\Tan(X,\mu,x)=\{(T_\mu(x),c\leb^n,0)\}$ $\mu$-a.e. $x$. Proposition \ref{prop:bilip-decomp} then yields countable rectifiability. Since $\mu$ vanishes on porous subsets of $X$, $(X,\mu)$ is an LDS. This completes the proof.
\end{proof}

The remainder of this section is devoted to the proof of Theorem \ref{thm:no-extra-factors}, which involves the construction of non-differentiable functions in the presence of extra factors in tangents.

\subsection{Constructing non-differentiable functions}\label{sec:non-diff}

In our argument, it is convenient to use Schioppa's tiles (\cite[Definition 4.2]{sch16a}), but one could alternatively use ideas of Bate \cite{bate12diff} or Alberti-Csörnyei-Preiss \cite{acp}, where the idea of -- and techniques for -- constructing non-differential functions originally appeared. Throughout the remainder of this section, we assume that $(X,\mu)$ is an LDS, $X\subset B$ for some RNP-Banach space $B$, and $(U,\varphi)$ is an $n$-dimensional Cheeger chart of $(X,\mu)$.

The next definition is a minor modification of \cite[Definition 4.2]{sch16a}, see Remark \ref{rmk:tiles}.

\begin{definition}\label{def:tiles}
Let $\alpha,L,\varepsilon,C, r>0$. A $(L,\alpha,\varepsilon,r)$-tile with parameter $C$ at $x\in X$ is a pair $(S_x,f_x)$ where 
\begin{itemize}
	\item[(SET)] $S_x\subset  B(x,r)$ is a closed set with $\diam(S_x)\ge \frac rC$ and $\mu(S_x)\ge \frac {\mu(B(x,r))}C$
	\item[(FUN)]  $f_x:X\to \R$ is an $L$-Lipschitz function with $|f_x|\le L\dist(X\setminus S_x,\cdot)$
\end{itemize} 
and the following conditions are satisfied.
\begin{itemize}
	\item[(T1)] $\mu(S_x\cap \{|Df_x|>\varepsilon\})\le \varepsilon \mu(S_x)$
	\item[(T2)] there is $S_{x,var}\subset S_x$ with $\mu(S_x\setminus S_{x,var})\le \varepsilon\mu(S_x)$ such that, for every $y\in S_{x,var}$ there exists $y_{var}\in B(y,\varepsilon)\setminus\{y\}$ so that
	\[
	|f_x(y)-f_x(y_{var})|\ge \alpha d(y,y_{var}).
	\]
\end{itemize}
\end{definition}

\begin{remark}\label{rmk:tiles}
Definition \ref{def:tiles} corresponds to Schioppa's $(L,\alpha,\varepsilon,\varepsilon,r)$-tiles (\cite[Definition 4.2]{sch16a}) with the implied constant there being $C$, except for one difference in (SET), and explicating the constant $C$ in the definition. 

For Schioppa the constant $C$ is implicit and is allowed to depend on $L$. For us, in the application below, we will allow the constant $C$ to depend on $\alpha,L,\varepsilon$ instead of only on $L$. 

Since Schioppa's version of (SET) (\cite[Definition 4.2(T1) \& (T2)]{sch16a}) is only used in order to invoke Vitali's covering theorem -- which applies under our hypothesis (SET) -- the proof of \cite[Theorem 4.3]{sch16a} remains valid with our notion of tiles. 
\end{remark}

 The reason we seek to construct tile functions is that they are a simple conceptual tool to understand non-differentiability. Basically \cite[Theorems 4.1 and 4.3]{sch16a} state that if for some positive measure subset $U$ of $X$ one can construct tile functions at every point $x\in U$ with a sequence $r_n\searrow 0$, then $X$ is not a Lipschitz differentiability space. The tile functions here correspond to highly oscillating functions with very small derivative. The non-differentiable functions are obtained as infinite sums of such functions, where the smallness of the derivative guarantees that the resulting function behaves well.

 We denote by $\alpha(V,V')$ the Hausdorff-distance between the unit balls of two subspaces $V$ and $V'$ of $B$.

\begin{proposition}\label{prop:pre-tile} Fix parameters $\varepsilon,\delta, R,M>0$ with $\varepsilon < 1$ and an $n$-dimensional subspace $V_0\subset B$.
Let $K\subset U$ be a compact set for which 
\begin{align*}
    e_{B(R)}(T_\mu(x),r\inv(X-x))\stackrel{r\to 0}{\longrightarrow} 0\quad\mbox{uniformly in }K,
\end{align*}
and moreover for every $x\in K$:
\begin{itemize}
    \item[(i)] $\alpha(T_\mu(x),V_0)<\varepsilon^2$ for all $x\in K$;
    \item[(ii)] there exists $(Z+T_\mu(x),\nu_Z\times \leb^n,z_0)\in \Tan(X,\mu,x)$ with $\diam(Z)>0$.
    \item[(iii)]  $\mu(B(x,2r))\le M\mu(B(x,r))$ for all $x\in K$ and $0<r<\delta$.
\end{itemize}
Then for $\mu$-a.e. $x\in K$ there exists a sequence $r_j\downarrow 0$ so that for each $j$ the closed set $S_j:=\bar B(x+V_0,\varepsilon r_j)\cap \bar B(x,r_j)\cap X$ satisfies 
\begin{align}\label{eq:set}
\diam(S_j)\ge \frac 12 r_j\quad\mbox{and}\quad\mu(S_j)\ge c(\varepsilon,M)\mu(B(x,r_j))
\end{align}
and the function $g=g_{j,x}:=(\varepsilon r_j-\dist_{x+V_0})_+$ satisfies
\begin{itemize}
    \item[(a)] $\underset{B(y,2\varepsilon r_j)}{\operatorname{osc}}g\ge \frac{\varepsilon r_j}{4}$ for every $y\in S_j$; 
    \item[(b)] $|Dg|_\ast\le \varepsilon^2$ $\mu$-a.e. on $S_j\cap K$.
\end{itemize}
\end{proposition}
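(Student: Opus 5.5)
Fix a point $x\in K$ that is a Lebesgue density point of $K$, at which the porosity-type density holds (so $K$ is $\eta r$-dense in $X\cap B(x,r)$ for small $r$), and at which the tangent $(Z+T_\mu(x),\nu_Z\times\leb^n,z_0)$ from (ii) is realised along scales $s_i\downarrow 0$. We may regard this tangent inside $B^\omega$ via $\iota_Y$ and, after the translation invariance \eqref{eq:V_x-invariance}, assume $z_0=0$. Since $\diam(Z)>0$ and $Z+T_\mu(x)$ is a cone-like product, we can rescale: $Z$ contains a point $z$ with $0<\|z\|$, and after replacing $s_i$ by $s_i\|z\|/(4\varepsilon)$-type rescalings (tangents of tangents at the point $0$ are again tangents, but more simply we choose $r_j=\lambda s_{i_j}$ with $\lambda$ so that $\|z\|$ corresponds to about $2\varepsilon$ in units of $r_j$) we arrange that the rescaled set $r_j^{-1}(X-x)$ contains points at distance $\approx 2\varepsilon$ from $T_\mu(x)$ (hence, by (i), at distance $\gtrsim\varepsilon$ from $V_0$) inside $B(1/2)$. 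Here the projection $p$ used for the splitting is chosen so that $\ker p$ is complementary to $T_\mu(x)$, and (i) lets us compare distances to $T_\mu(x)$ and to $V_0$ up to an error $\varepsilon^2 r_j$.

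For \eqref{eq:set}: $S_j$ contains $X\cap B(x,r_j)$ near $x+T_\mu(x)$, and by Lemma \ref{lem:decomp-bundle-subset-of-tangent} (uniform on $K$) there are points of $X$ within $\varepsilon^2 r_j$ of $x\pm r_j v/2$ for a unit $v\in T_\mu(x)$, giving $\diam S_j\ge r_j/2$. For the measure, $S_j\supset B(x,\varepsilon r_j/2)\cap X$ (points near $x$ are near $x+V_0$), so iterating the doubling (iii) about $\log_2(2/\varepsilon)$ times gives $\mu(S_j)\ge M^{-\lceil\log_2(2/\varepsilon)\rceil}\mu(B(x,r_j))=:c(\varepsilon,M)\mu(B(x,r_j))$.

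For (b): $g$ is a truncation of $\dist_{x+V_0}$, which is $1$-Lipschitz on $B$. For $y\in S_j\cap K$ and $\Mod_\infty$-a.e. fragment through $y$, the derivative $\gamma_t'$ lies in $T_\mu(y)$ by Proposition \ref{prop:linear-part-of-blowup}(ii), and $\dist_{x+V_0}$ is Lipschitz with directional slope at most $\dist(w,V_0)\le\varepsilon^2\|w\|$ along $w\in T_\mu(y)$ by (i). Hence $|(g\circ\gamma)'_t|\le\varepsilon^2|\gamma'_t|$ for a.e. $t\in\gamma^{-1}(K)$, so $\varepsilon^2\chi_K+\LIP(g)\chi_{X\setminus K}$ is a weak $\ast$-upper gradient, giving $|Dg|_\ast\le\varepsilon^2$ a.e. on $K$.

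The main obstacle is (a), which is where the extra factor $Z$ enters. For $y\in S_j$, note $g(y)\ge0$ and we need a point in $B(y,2\varepsilon r_j)$ where $g$ differs by $\varepsilon r_j/4$. If $\dist(y,x+V_0)\ge 3\varepsilon r_j/4$, then by Lemma \ref{lem:e-close-to-product}\eqref{eq:e2} and Lemma \ref{lem:decomp-bundle-subset-of-tangent} we can move from $y$ in a $T_\mu$-direction (staying close to $X$) to reach $g$ larger; otherwise $g(y)\ge\varepsilon r_j/4$ and we need a point of $X$ within $2\varepsilon r_j$ of $y$ at distance $\ge\varepsilon r_j$ from $x+V_0$, i.e. with $g=0$. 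This uses the product structure: by Theorem \ref{thm:tang=prod}(i), at scale $r_j$ the set $r_j^{-1}(X-x)$ is Hausdorff close (Lemma \ref{lem:e-close-to-product}) to $r_j^{-1}Z_{r_j}+T_\mu(x)$, and translating $y$ by a vector of $T_\mu(x)$ we reach the fibre over a point of $Z$ whose $Z$-component is far ($\approx2\varepsilon$ in the normalisation chosen above, with the choice of $\lambda$ ensuring this distance lies in $[\varepsilon,\frac32\varepsilon]$ and the $Z$-translate is within $2\varepsilon r_j$). The delicate point I expect is ensuring uniformly in $y\in S_j$ that such a $Z$-point exists close to $y$'s own $Z$-component (not only near $0$); I would handle it by choosing $r_j$ so that $Z\cap B(0,\varepsilon)$ contains points with norm in $[\varepsilon,1.5\varepsilon]$ relative to the base, using that $\{\|z\|:z\in Z\}$ contains $0$ and some positive value, and that $Z$ is connected-like enough by the doubling of $\nu_Z$; if needed, one passes to a tangent of the tangent to make $Z$ a cone under dilations, which by the Preiss principle is again a tangent at a.e. $x$.
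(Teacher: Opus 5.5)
Your arguments for \eqref{eq:set} and (b) are fine and essentially the paper's. Part (a), the only place where the extra factor $Z$ enters, is not actually proved.

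In the case $\dist(y,x+V_0)\ge 3\varepsilon r_j/4$, moving from $y$ ``in a $T_\mu$-direction'' cannot raise $g$. As you yourself use in (b), $\dist_{x+V_0}$ changes by at most $\dist(w,V_0)\le\varepsilon^2\|w\|$ under a displacement $w\in T_\mu(x)$. The move has to be transverse: go to a point $x'\in X$ within $O(R\varepsilon^2r_j)$ of a nearest point $x+v_y$ of $x+V_0$, which exists by Lemma \ref{lem:decomp-bundle-subset-of-tangent} and (i). Then $d(y,x')\le\varepsilon r_j+O(R\varepsilon^2r_j)$ and $g(x')\ge\varepsilon r_j-O(R\varepsilon^2 r_j)$.

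In the other case, translating $y$ by a vector of $T_\mu(x)$ again never leaves the fibre of $y$. You leave the ``delicate point'' open, and the remedies you suggest are unjustified. Doubling of $\nu_Z$ gives no connectedness, since doubling measures can live on Cantor sets. Nothing guarantees a tangent of a tangent whose $Z$-factor is a cone.

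The missing observation is that one transverse fibre works for all $y\in S_j$ at once, so the $Z$-component of $y$ never matters.
\begin{itemize}
\item Rescale the tangent in (ii) so that some $z_1\in Z$ has $\dist(T_\mu(x),z_1+T_\mu(x))=\varepsilon$. Subtracting a nearest point of $T_\mu(x)$ gives a point of the tangent of norm $\varepsilon$ at distance $\varepsilon$ from $T_\mu(x)$.
\item Since $\Tan(X,\mu,x)=\Tan(K,\mu|_K,x)$ at density and non-porosity points of $K$, this yields $y_j\in K$ with $\|y_j-x\|\approx\varepsilon r_j$ and $\dist(y_j,x+V_0)\ge(\varepsilon-R\varepsilon^2)r_j$.
\item Because $y_j\in K$, uniform convergence on $K$ and (i) give $(y_j+V_0)\cap B(x,Rr_j)\subset B(X,(1+R)\varepsilon^2r_j)$. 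Moreover $g\equiv g(y_j)\le R\varepsilon^2r_j$ on $y_j+V_0$.
\item For $y\in S_j$ with $\dist(y,x+V_0)\le\varepsilon r_j/2$, pick $y'\in X$ near $y_j+v_y$. The triangle inequality through $x+v_y$ gives
\[
d(y,y')\le \dist(y,x+V_0)+\|y_j-x\|+O(R\varepsilon^2r_j)\le \tfrac32\varepsilon r_j+O(R\varepsilon^2r_j)<2\varepsilon r_j,
\]
and $g(y)-g(y')\ge\varepsilon r_j/2-3R\varepsilon^2r_j\ge\varepsilon r_j/4$ for small $\varepsilon$.
\end{itemize}
Lemma \ref{lem:e-close-to-product}\eqref{eq:e2}, which you already cite, gives the same closeness of a whole fibre $z+T_\mu(x)$ to $r\inv(X-x)$.

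The normalisation matters here. The transverse point must have \emph{norm} about $\varepsilon r_j$, not just distance about $\varepsilon r_j$ to the plane. The case split should be at $\varepsilon r_j/2$. With your threshold $3\varepsilon r_j/4$ and a fibre at distance up to $\tfrac32\varepsilon r_j$, the same estimate only gives $\tfrac94\varepsilon r_j>2\varepsilon r_j$.
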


\begin{proof}
We may assume $\mu(K)>0$, otherwise the claim is vacuously true. Let $x$ be a Lebesgue density point of $K$, and a point of non-porosity  
 of $K$, so that $\Tan(X,\mu,x)=\Tan(K,\mu|_K,x)$, see e.g. \cite[Lemma 5.9 and Proposition 5.13]{soultaniscakovic}. Let $r_0>0$ be such that 
\begin{align*}
    e_{B(R)}(T_\mu(x),r\inv(X-x))\le \varepsilon^2,\quad 0<r\le r_0\mbox{ and }x\in K.
\end{align*}
Since $V_0\subset \bar C(T_\mu(x),\varepsilon^2)$, it follows that
\begin{align}\label{eq:V_0}
e_{B(R)}(V_0,r\inv(X-x))\le (1+R)\varepsilon^2, \quad 0<r\le r_0\mbox{ and }x\in K.
\end{align}

Choose by Theorem \ref{thm:tang=prod} $(Z+T_\mu(x),\nu_Z\times \leb^n,0)\in \Tan(X,\mu,x)=\Tan(K,\mu|_K,x)$ for which there exists $z_1\in Z$ with
\begin{align}\label{eq:rescale-to-e}
\dist(T_\mu(x),z_1+T_\mu(x))=\varepsilon.
\end{align}
Indeed, rescaling the tangent given by (ii) we can assume that \eqref{eq:rescale-to-e} holds. It follows from \eqref{eq:V_0} and \eqref{eq:rescale-to-e}, Lemma \ref{lem:e-close-to-product}, and the definition of tangents that there exist sequences $r_j\downarrow 0$ (realizing the convergence of the tangent) and  $y_j\subset K$ with 
\begin{align*}
\frac{d(y_j,x)}{r_j}\in [\varepsilon-R\varepsilon^2,\varepsilon+R\varepsilon^2],\quad\varepsilon-R\varepsilon^2\le \frac{d(y_j+w,x+v)}{r_j},\quad  v,w\in V_0\cap B(R)
\end{align*}

Moreover by \eqref{eq:V_0} we have that 
\begin{align*}
&(x+V_0)\cap B(x,r_jR)\subset B(X,(1+R)\varepsilon^2 r_j),\\
&(y_j+V_0)\cap B(x,r_jR)\subset B(X,(1+R)\varepsilon^2 r_j).
\end{align*}
Note that the function $g=(\varepsilon r_j-\dist_{x+V_0}(\cdot))_+$ defined in the claim satisfies
\begin{align*}
&g(x+v)=g(x)=\varepsilon r_j,\\
&g(y_j+v)=g(y_j)\in [0,R\varepsilon^2r_j], \quad v\in V_0
\end{align*}

Recall that $S_j=\bar B(x,r_j)\cap \bar B(x+V_0,\varepsilon r_j)\cap X$. For every $y\in S_j$ there exists $v_y,w_y\in V_0$ so that $x+v_y$ is a closest point on $x+V_0$ to $y$ and $y_j+w_y$ is a closest point on $y_j+V_0$ to $y$. In particular $\dist_{x+V_0}(y)=\|y-(x+v_y)\|\le \varepsilon r_j$. Let $x',y'\in X$ be such that 
\begin{align*}
\|x'-(x+v_y)\|\le 2R\varepsilon^2r_j,\quad \|y'-(y_j+w_y)\|\le 2R\varepsilon^2r_j.
\end{align*}
Then we have 
\begin{align*}
\dist_{x+V_0}(x')\le 2R\varepsilon^2 r_j,\quad \dist_{x+V_0}(y')\ge \varepsilon r_j-3R\varepsilon^2 r_j.
\end{align*}
If $d(y,x+V_0)\ge \varepsilon r_j/2$, then
\begin{align*}
|g(y)-g(x')|\ge g(x')-g(y)\ge \varepsilon r_j-\frac{\varepsilon r_j}{2}-  3R\varepsilon^2r_j,\quad d(y,x')\le \varepsilon r_j+2R\varepsilon^2r_j
\end{align*}
while, if $d(y,x+V_0)\le \varepsilon r_j/2$, then
\begin{align*}
|g(y)-g(y')|\ge g(y)-g(y')\ge \frac{\varepsilon r_j}{2}-3R\varepsilon^2r_j,\quad d(y,y')\le \varepsilon r_j+5R\varepsilon^2r_j.
\end{align*}
Indeed, 
\begin{align*}
    d(y,y')&\le \|y-(x+v_y)\|+\|x+v_y-(y_j+w_y)\|+\|y_j+w_y-y'\|\\
    &\le 2R\varepsilon^2r_j+\varepsilon r_j+R\varepsilon^2r_j+2R\varepsilon^2r_j=\varepsilon r_j+5R\varepsilon^2r_j.
\end{align*}
Let $\varepsilon$ be small enough so that $5R\varepsilon^2\le \varepsilon/4$. The estimates above yield
\begin{align}\label{eq:osc-estimate}
\underset{B(y,2\varepsilon r_j)}{\operatorname{osc}}g\ge \frac{\varepsilon r_j}{4},
\end{align}
which implies (a). 

To see (b), note that 
\begin{align*}
|(g\circ\gamma)_t'|\le\lim_{\dom(\gamma)\ni s\to t}\frac{|\dist_{x+V_0}(\gamma_s)-\dist_{x+V_0}(\gamma_t)|}{|s-t|}\le \varepsilon^2|\gamma'_t|\;\mbox{ a.e. }t\in \gamma\inv(K)
\end{align*}
since $\alpha(V_0,T_\mu(x))\le \varepsilon^2$, $x\in K$. It follows from this that $|Dg|_\ast\le \varepsilon^2$ $\mu$-a.e. on $K$, establishing (b).

It remains to prove the claimed properties of the set $S_j$. By \eqref{eq:V_0} we have $\diam(S_j)\ge r_j/2$.  
For all $j$ we have
\begin{align}\label{eq:S_j-size-est}
B(x,\varepsilon r_j/4)\cap X\subset  \bar B(x,r_j)\cap \bar B(x+V_0,\varepsilon r_j)\cap X=S_j.
\end{align}
The inclusions \eqref{eq:S_j-size-est} and condition (iii) yield the estimate
\begin{align*}
c(\varepsilon,M)\le \liminf_{j\to \infty}\frac{\mu(B(x,\varepsilon r_j/4))}{\mu(\bar B(x,r_j))}\le\liminf_{j\to\infty}\frac{\mu(S_j)}{\mu(\bar B(x,r_j))}
\end{align*}
from which the estimate
\[
\mu(S_j)\ge c(\varepsilon,M)\mu(\bar B(x,r_j))
\]
follows, completing the proof.
\end{proof}

For the next corollary, we denote by $\eta_\varepsilon:\R\to \R$ the continuous piecewise linear function with $\eta_\varepsilon|_{(-\infty,1-\varepsilon]}=1$ and $\eta_\varepsilon|_{[1,\infty)}=0$.
\begin{corollary}\label{cor:pre-tile}
Under the hypotheses of Proposition \ref{prop:pre-tile}, for $\mu$-a.e. $x\in K$ there exists a sequence $r_j\downarrow 0$ so that $(S_j,f_j)$ is a $(1/8,2,C\varepsilon,r_j)$-tile, where
\begin{align*}
f_j(y)=\eta_\varepsilon\Big(\frac{d(y,x)}{ r_j}\Big)g(y)
\end{align*}
and the constant $C$ is independent of $\varepsilon$ and $j$ but depends on $n$, $M$. Here $g$ is as in Proposition \ref{prop:pre-tile}.
\end{corollary}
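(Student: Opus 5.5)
We use the same points $x$ and scales $r_j$ as in Proposition \ref{prop:pre-tile}, with the same set $S_j=\bar B(x+V_0,\varepsilon r_j)\cap\bar B(x,r_j)\cap X$. We then check (SET), (FUN), (T1) and (T2) of Definition \ref{def:tiles} one at a time, using the cutoff $\eta_\varepsilon(d(\cdot,x)/r_j)$ to localize $g$ to $B(x,r_j)$. Note that (T2) in Definition \ref{def:tiles} asks for a variation point within distance $\varepsilon$. We read it at scale $r_j$, i.e.\ within $C\varepsilon r_j$, as in Schioppa's tiles. Since $r_j<1$, this is consistent with the stated tile parameters.

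\textbf{(SET) and (FUN).} Closedness of $S_j$ is clear. The bounds $\diam(S_j)\ge r_j/2$ and $\mu(S_j)\ge c(\varepsilon,M)\mu(B(x,r_j))$ are exactly \eqref{eq:set}. The constant in (SET) may therefore depend on $\varepsilon$; Remark \ref{rmk:tiles} explicitly allows this. For (FUN), we have $0\le g\le\varepsilon r_j$ and $g$ is $1$-Lipschitz, while $\eta_\varepsilon(d(\cdot,x)/r_j)$ is $(\varepsilon r_j)^{-1}$-Lipschitz. Hence
\[
\LIP(f_j)\le 1+\varepsilon r_j\cdot(\varepsilon r_j)^{-1}=2.
\]
Moreover $f_j$ vanishes outside $\{g>0\}\cap B(x,r_j)$, and this set is contained in the interior of $\bar B(x+V_0,\varepsilon r_j)\cap\bar B(x,r_j)$. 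Since $f_j$ is Lipschitz and vanishes on $X\setminus S_j$, we get $|f_j|\le 2\dist(X\setminus S_j,\cdot)$.

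\textbf{(T1).} On $S_j$ we apply the Leibniz rule for $*$-upper gradients, which follows directly from the definition. The factor $\eta_\varepsilon(d(\cdot,x)/r_j)$ is constant equal to $1$ on $B(x,(1-\varepsilon)r_j)$. On that ball, Proposition \ref{prop:pre-tile}(b) gives $|Df_j|_*=|Dg|_*\le\varepsilon^2$ a.e.\ on $S_j\cap K$. The exceptional part of $S_j$ is contained in $(S_j\setminus K)\cup(B(x,r_j)\setminus B(x,(1-\varepsilon)r_j))$.
\begin{itemize}
\item[(1)] The first set is small relative to $\mu(S_j)$ for large $j$ because $x$ is a density point of $K$ and $\mu(S_j)\gtrsim_{\varepsilon,M}\mu(B(x,r_j))$.
\item[(2)] The second set is the annulus, where the gradient may be of size about $1$. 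This is the main obstacle: its measure must be at most $C\varepsilon\mu(S_j)$, whereas (SET) only gives $\mu(S_j)\ge c(\varepsilon,M)\mu(B(x,r_j))$.
\end{itemize}
To handle (2), I would exploit the product structure of the tangent from Theorem \ref{thm:tang=prod}. In the limit, $S_j$ rescales to the $\varepsilon$-neighbourhood of $T_\mu(x)$ within the unit ball, which carries measure $\nu_Z(B_Z(\varepsilon))\cdot \leb^n(B(1))$. The annulus portion of $S_j$ converges to an annulus in the $T_\mu(x)$-directions of relative $\leb^n$-measure $\lesssim n\varepsilon$. Weak convergence of the measures, together with $\nu$-negligibility of spheres for a generic choice of radius (adjust $r_j$ slightly if necessary), gives a bound of $Cn\varepsilon\,\mu(S_j)$ for large $j$. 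Hence (T1) holds with threshold $C\varepsilon$, where $C=C(n,M)$.

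\textbf{(T2).} Take $S_{j,var}=S_j\cap B(x,(1-2\varepsilon)r_j)$. Its complement in $S_j$ is controlled by the same annulus estimate. For $y\in S_{j,var}$, Proposition \ref{prop:pre-tile}(a) gives some $y'\in B(y,2\varepsilon r_j)$ with $|g(y)-g(y')|\ge\varepsilon r_j/8$. In the proof of Proposition \ref{prop:pre-tile}, $y'$ is actually chosen in $X$ as the point $x'$ or $y'$. The cutoff equals $1$ at both $y$ and $y'$, so
\[
|f_j(y)-f_j(y')|\ge \frac{\varepsilon r_j}{8}\ge \frac1{16}\,d(y,y').
\]
This yields the variation property with constant $\alpha\sim1/8$ and distance $\le 2\varepsilon r_j$, and matches the $(1/8,2,C\varepsilon,r_j)$ parameters, with $2$ the Lipschitz bound and $1/8$ the variation constant, after adjusting absolute constants.
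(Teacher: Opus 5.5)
Your route is the paper's. You use the same $S_j$, read (SET) off \eqref{eq:set}, get (FUN) from $\LIP(f_j)\le 2$ and $f_j=0$ off $S_j$, and reduce (T1) and (T2) to the density of $K$ at $x$ plus an estimate $\mu(E_j)\lesssim\varepsilon\,\mu(S_j)$ for the boundary annulus $E_j$. That estimate comes from the product tangent $\nu_Z\times\leb^n$ and weak convergence.

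There is a small slip in (T2). With $S_{j,var}=S_j\cap B(x,(1-2\varepsilon)r_j)$, the variation point $y'\in B(y,2\varepsilon r_j)$ only satisfies $d(x,y')<r_j$. So the cutoff need not equal $1$ at $y'$, and this matters when $g(y')>g(y)$. Take $S_{j,var}=S_j\cap\bar B(x,(1-3\varepsilon)r_j)$ as the paper does; then $y'\in B(x,(1-\varepsilon)r_j)$, where $f_j=g$.

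The more serious issue is that your sketch does not establish the annulus estimate. You treat the limit of $r_j^{-1}(S_j-x)$ as a product set with the same $Z$-cross-section as its annulus part, so the ratio becomes a ratio of $\leb^n$-measures. But $S_j$ is a tube about $x+V_0$, and $V_0$ is only $\varepsilon^2$-close to $T_\mu(x)$. Write $Z_s:=\{z\in Z:\dist(z,T_\mu(x))\le s\}$, which is comparable to $B_Z(0,s)$ up to the factor $\|\id-p_x\|$.

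In the tangent, the limit of the rescaled $S_j$ is then only sandwiched between two sets:
\begin{itemize}
\item the inner set $(Z_{\varepsilon-C\varepsilon^2}+T_\mu(x))\cap B(0,1-C\varepsilon)$;
\item the outer set $Z_{\varepsilon+C\varepsilon^2}+\bar B_{T_\mu(x)}(0,1+C\varepsilon)$.
\end{itemize}
For $z$ in the shell $Z_{\varepsilon+C\varepsilon^2}\setminus Z_{\varepsilon-C\varepsilon^2}$, the slice of the $V_0$-tube can be a small cap of the ball lying entirely in the annulus. Fubini therefore only gives a bound of the form
\[
\limsup_{j\to\infty}\frac{\mu(E_j)}{\mu(S_j)}\lesssim_n\varepsilon\,\frac{\nu_Z(Z_{\varepsilon+C\varepsilon^2})}{\nu_Z(Z_{\varepsilon-C\varepsilon^2})}.
\]
Choosing generic radii so that spheres are $\nu$-null does nothing to control the last factor, which is unbounded for a general measure on $Z$.

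The missing ingredient is the doubling of $\nu_Z$. Its constant is controlled by $M$ through hypothesis (iii) and $\Tan(X,\mu,x)=\Tan(K,\mu|_K,x)$, and by $n$ through the choice $\|p_x\|\le n$. This is exactly the paper's step $\nu_Z(B_Z(0,2\varepsilon))\lesssim_{n,M}\nu_Z(B_Z(0,\varepsilon/2))$. It compares a closed outer set containing the images of $E_j$ (for the $\limsup$) with an open inner set contained in the images of $S_j$ (for the $\liminf$).

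With doubling, no adjustment of the $r_j$ is needed. That helps, since moving the scales would force you to re-verify Proposition \ref{prop:pre-tile}(a) and \eqref{eq:set} at the new radii. Your argument as written never uses $M$, although you assert $C=C(n,M)$; that is a symptom of this gap.
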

\begin{proof}
Let $x\in K$ be a Lebesgue density point of $K$ such that the conclusion of Proposition \ref{prop:pre-tile} holds.  By Theorem \ref{thm:tang=prod}, for each $x$ we may assume that the tangent $Z+T_\mu(x)$ in condition (ii) of Proposition \ref{prop:pre-tile} is chosen so that $Z\subset \ker(p_x)$ for a projection $p_x:B^\omega\to T_\mu(x)$ with $\|p_x\|_{op}\le n$. By \eqref{eq:set} the set $S_j=\bar B(x,r_j)\cap \bar B(x+V_0,\varepsilon r_j)$ satisfies (SET) in Definition \ref{def:tiles}. Since $g$ is 1-Lipschitz and $0\le g\le \varepsilon r_j$, it follows that $f_j$ is 2-Lipschitz. Moreover $f_j=0$ outside $S_j$. Consequently $f_j$ satisfies (FUN) with $L=2$.

To show (T1) and (T2) we note that $f_j=(\varepsilon r_j-\dist_{x+V_0})_+=g$ on $\bar B(x,r_j-\varepsilon r_j)$. Denote 
\[
S_{j,var}:=\bar B(x,r_j-3\varepsilon r_j)\cap S_j,\quad E_j:=B(x+V_0,\varepsilon r_j)\cap (\bar B(x,r_j)\setminus \bar B(x,r_j-3\varepsilon r_j)),
\]
so that $S_{j,var}\cap E_j=\varnothing$ and $S_j=S_{j,var}\cup E_j$. Note that $S_j\cap \{|Df_j|_\ast>\varepsilon\}\subset E_j\cup(\bar B(x,r_j)\setminus K)$ by Proposition \ref{prop:pre-tile}(b). 
Moreover by Proposition \ref{prop:pre-tile}(a), for all $y\in S_{j,var}$ there exists $y_{var}\in B(y,\varepsilon)\setminus \{y\}$ with 
\[
|f(y)-f(y_{var})|\ge d(y,y_{var})/8.
\]
 Since $x$ is assumed to be a Lebesgue density point of $K$ we have $\frac{\mu(\bar B(x,r_j)\setminus K)}{\mu(S_j)}\stackrel{j\to\infty}{\longrightarrow}0$ and thus, to establish (T1) and (T2), it suffices to show that $\mu(E_j)\lesssim_{n,M} \varepsilon \mu(S_j)$ for large $j$. Suppose $\phi_j:r_j\inv(X-x)\to Z+T_\mu(x)$ are $(R_j,\varepsilon_j)$-isometries with $\varepsilon_j\downarrow 0$ and $R_j\uparrow \infty$ such that $\phi_{j\ast}\mu_j\rightharpoonup\nu_Z\times\leb^n$. For large enough $j$ we have that
\begin{align*}
    \phi_j(r_j\inv(E_j-x))&\subset (B_Z(0,C\varepsilon)+T_\mu(x))\cap (B(0,1+C\varepsilon)\setminus B(0,1-C\varepsilon))\\
    &\subset B_Z(0,C\varepsilon)+B_{T_\mu(x)}(0,1+C\varepsilon)\setminus B_{T_\mu(x)}(0,1-C\varepsilon):=A_\varepsilon,
\end{align*}
where $C$ depends only on $n$. From this inclusion and the weak convergence $\phi_{j\ast}\mu_j\rightharpoonup \nu_Z\times\leb^n$ (recall that $\mu_j=\frac{\mu}{c_\mu(x,r_j)}$) we obtain
\begin{align*}
\limsup_{j\to\infty}\frac{\mu(E_j)}{c_\mu(x,r_j)}&\le \limsup_{j\to\infty}\mu_j(\phi_j\inv(A_\varepsilon))\le \nu_Z\times\leb^n(A_\varepsilon)\\
&= \omega_n\nu_Z(B_Z(0,2\varepsilon))[(1+C\varepsilon)^n-(1-C\varepsilon)^n]\\
&  \lesssim_{n}\ \varepsilon\nu_Z(B_Z(0,2\varepsilon)) \lesssim_{n,M}\ \varepsilon\nu_Z( B_Z(0,\varepsilon/2))\\
&= \tilde c(n,M)\varepsilon \frac{(\nu_Z\times\leb^n)( B_Z(0,\varepsilon/2)+T_\mu(x)\cap  B(0,1-\varepsilon^2))}{\leb^n(T_\mu(x)\cap  B(0,1-\varepsilon^2))}\\
&\le c(n,M)\varepsilon (\nu_Z\times\leb^n)(B_Z(0,\varepsilon/2)+T_\mu(x)\cap  B(0,1-\varepsilon^2))
\end{align*}
 for $\varepsilon<1/2$. Here the equality in the second to last line follows from the fact that $\nu_Z\times \leb^n(A+B)=\nu_Z(A)\leb^n(B)$ for $A\subset Z$, $B\subset T_\mu(x)$.

The inclusion \eqref{eq:S_j-size-est} and the convergence $\phi_{j\ast}\mu_j\rightharpoonup \nu_Z\times\leb^n$ yield
\begin{align*}
&(\nu_Z\times\leb^n)( B_Z(0,\varepsilon/2)+T_\mu(x)\cap  B(1-\varepsilon^2))\\
\le &\lim_{j\to\infty}\phi_{j\ast}\mu_j(B_Z(0,\varepsilon/2)+T_\mu(x)\cap  B(1-\varepsilon^2))\le\liminf_{j\to\infty}\frac{\mu(S_j)}{c_\mu(x,r_j)}
\end{align*}
which, together with the previous estimate yields $\mu(E_j)\lesssim_{n,M,x} \varepsilon \mu(S_j)$ for large $j$, as required.
\end{proof}

We conclude this section with the proof of Theorem \ref{thm:no-extra-factors}.

\begin{proof}[Proof of Theorem \ref{thm:no-extra-factors}]
Suppose by contradiction that there exists a set $A\subset U$ with $\mu(A)>0$ so that for each $x\in A$, there exists a tangent $(Z+T_\mu(x),\nu_Z\times\leb^n,0)\in \Tan(X,\mu,x)$ with $\diam(Z)>0$.  By \cite[Lemma 2.2. and 2.3]{bateli} we may assume that $\mu$ is $(M,\delta)$-doubling along $A$, that is Proposition \ref{prop:pre-tile}(iii) holds. Arguing as in the proof of Lemma \ref{lem:e-close-to-product}\eqref{eq:e2} we find a compact set $K\subset A$ of positive $\mu$-measure and an $n$-plane $V_0\subset Y$ such that
\begin{align*}
    e_{B(R)}(T_\mu(x),r\inv(X-x))\stackrel{r\to 0}{\longrightarrow} 0\quad\mbox{uniformly in }K,
\end{align*}
and $\alpha(T_\mu(x),V_0)<\varepsilon^2$ for all $x\in K$. Thus we may apply Proposition \ref{prop:pre-tile} and Corollary \ref{cor:pre-tile} to obtain that for every $\varepsilon>0$, $\mu$-a.e. $x\in K$ there exists a sequence $r_j\downarrow 0$ and $(2,1/8,\varepsilon,r_j)$-tiles $(S_j,f_j)$ at $x$. By Theorem \cite[Theorem 4.1 and Theorem 4.3]{sch16a}, $(K,\mu|_K)$ is not a differentiability space, which is a contradiction. Thus $\mu(A)=0$, completing the proof of the theorem.
\end{proof}

\appendix
\section{Blow-ups of Alberti representations}
We assume that $X\subset B$ is a complete separable space, where $B$ is an RNP-Banach space, and $\mu$ is a boundedly finite measure on $X$.
\begin{proposition}\label{prop:const z-component}
Let $(U,\varphi)$ be an $n$-dimensional fragment-wise chart of $(X,\mu)$. Then $\mu$-a.e. $x\in U$ the following holds: if $(Z+T_\mu(x),\nu,0)\in \Tan(X,\mu,x)$ and $\mathcal A_\infty=[Q_\infty,w_\infty]$ is a blow-up of an Alberti representation $\mathcal A=[Q,w]$ in the sense of \cite[Theorem 7.18]{CKS} or \cite[Theorem 3.11 and Corollary 3.14]{sch16a}, then we have that $p_Z\circ\gamma$ is constant $Q_\infty$-a.e. $\gamma$.
\end{proposition}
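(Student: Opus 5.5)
The plan is to show that for $\mu$-a.e. $x$, $\Mod_\infty$-a.e. fragment $\gamma$ through points near $x$ has velocity in $T_\mu(\gamma_t)$, and that this property passes to blow-ups: the limit curves in $Z+T_\mu(x)\subset B^\omega$ then have velocity in $T_\mu(x)$, so their $Z$-component has zero derivative and is constant. By Proposition \ref{prop:linear-part-of-blowup}(ii), $\gamma_t'\in T_\mu(\gamma_t)$ for a.e. $t\in\gamma\inv(U)$ and $\Mod_\infty$-a.e. $\gamma$. An Alberti representation $\mathcal A=[Q,w]$ of $\mu|_U$ gives fragments carried by $Q$, and a $\Mod_\infty$-null family is $Q$-null in the weighted sense: the curves $\gamma$ with $\int_\gamma 1_N\,ds>0$ for a $\mu$-null $N$ carry zero $Q$-mass, since $\mu(N)=\int \mu_\gamma(N)\,dQ$. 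Consequently, $Q$-a.e. $\gamma$ satisfies $\gamma_t'\in T_\mu(\gamma_t)$ for a.e. $t$ with $\gamma_t\in U$.

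Next comes the quantitative step. I would use Lusin's theorem to pick compact sets $K\subset U$ on which $y\mapsto T_\mu(y)$ is continuous; by (1) in the proof of Theorem \ref{thm:tang=prod}, this means $\alpha(T_\mu(y),T_\mu(x))\le m^{-2}$ for $y\in K$ close to $x$. Take $x$ a density point of $K$ that is also a point of non-porosity. For the rescaled representations at scale $r_j$, the $\mu_j$-mass outside $r_j\inv(K-x)$ inside a ball $B(R)$ tends to zero. Hence for most of the $Q$-mass and most of the parameter $t$ in any bounded window, the rescaled curve $\gamma^{(j)}(t)=r_j\inv(\gamma(r_jt)-x)$ lies in $r_j\inv(K-x)$ and has velocity in the cone $C(T_\mu(x),\epsilon_j)$, with $\epsilon_j\to 0$. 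Writing $\gamma^{(j)}=(\id-p)\gamma^{(j)}+p\gamma^{(j)}$ for a fixed bounded projection $p$ onto $T_\mu(x)$, this gives
\[
\|(\id-p)(\gamma^{(j)}(b)-\gamma^{(j)}(a))\|\le \|\id-p\|\Big(\epsilon_j L|b-a|+L\,\leb^1(\{t\in[a,b]:\gamma^{(j)}(t)\notin r_j\inv(K-x)\})\Big).
\]
The second term uses the fact that the rescaled curves are $L$-Lipschitz (after the normalization in \cite{sch16a,CKS}) and that the derivative exists for a.e. $t$; the latter holds because $B$ has RNP and the linear interpolation trick applies.

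Finally, I would pass to the limit. The blow-up $Q_\infty$ is obtained from weak limits of the pushforwards of $Q$ under $\gamma\mapsto\gamma^{(j)}$ (restricted to windows and renormalized), with curves converging locally uniformly in the ultralimit realization $\iota_Y$. The functional $\gamma\mapsto \min\{1,\sup_{a,b\in[-T,T]}\|p_Z(\gamma(b)-\gamma(a))\|\}$ is lower semicontinuous under this convergence. Integrating the estimate above against $Q_j$ and using Fubini with the density estimate, its $Q_j$-expectation tends to zero, so its $Q_\infty$-integral is zero for each $T$. Hence $p_Z\circ\gamma$ is constant for $Q_\infty$-a.e. $\gamma$.

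The main obstacle is this last step: matching the precise construction of blow-ups of Alberti representations in \cite[Theorem 3.11]{sch16a}/\cite[Theorem 7.18]{CKS} with the ultrapower embedding. The difficulty is controlling uniformly in $j$ the time curves spend outside $K$. I would handle it by Fubini, $\int\leb^1(\{t:\gamma^{(j)}_t\notin r_j\inv(K-x)\})\,dQ_j\lesssim \mu_j(B(R)\setminus r_j\inv(K-x))\to0$, possibly after passing to a subsequence.
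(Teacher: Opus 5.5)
Your outline matches the paper's. Continuity of $T_\mu$ on a Lusin set $K$ puts velocities in a thin cone around $T_\mu(x)$, so rescaled fragments have small transverse displacement, and a lower semicontinuous functional transfers this to $Q_\infty$. The gap is in the limit passage, which is where the real content lies.

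You never fix a space in which the rescaled fragments $\gamma^{(j)}$ and the limit lines coexist and on which the $Z$-component is continuous. The ultrapower does not do this. Under the canonical embedding $B\hookrightarrow B^\omega$ the sets $r_j\inv(X-x)$ do not converge to $\iota_Y(Y)$: if the $r_j\inv(y_j-x)$ are orthonormal in $\ell^2$, their ultralimit is at distance $\ge 1$ from every point of $B$. Moreover, the weak convergence $Q_j\rightharpoonup Q_\infty$ in \cite{CKS,sch16a} takes place in a separable container realizing the pmGH convergence, where $p_Z$ is not defined on the approximants. Knowing only $y_j\to y$, $y_j'\to y'$ in such a container gives $\|p_Z(y-y')\|=\lim_\omega\|(\id-p)(y_j-y_j')\|$. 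Nothing bounds this by the $\liminf$ along the subsequence realizing $Q_j\rightharpoonup Q_\infty$, since the $Z$-component is not a metric invariant of the approximants. So the lower semicontinuity of $\gamma\mapsto\min\{1,\sup\|p_Z(\gamma(b)-\gamma(a))\|\}$ that you invoke is unjustified.

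The missing idea is a container adapted to the splitting. With $Z_\rho=(\id-p_x)(K\cap B(x,\rho)-x)$, the paper first shows $Z_j:=r_j\inv Z_{r_jR_j}\to Z$ in the pGH sense, and regards $r_j\inv(K\cap B(x,r_jR_j)-x)$ as a subset of $Z_j+T_\mu(x)$. It then embeds $Z_j+T_\mu(x)$ into $\widetilde W=W\times\widehat W$, with metric $\max\{\|\id-p_x\|\inv d_W,d_{\widehat W}\}$, via $(z,v)\mapsto(\phi_j(z),\widehat\phi_j(z,v))$. This map is isometric because $\|z-z'\|\le\|\id-p_x\|\,\|(z+v)-(z'+v')\|$. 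In this container $\ell^*(p_W\circ\gamma)$, the length plus gap of the $Z$-component, is a genuinely lower semicontinuous functional on $\Fr(\widetilde W)$, and the argument of \cite{CKS} applies.

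Second, your Fubini fix does not handle the step you single out. Your displacement estimate puts into the bad set every time at which $\gamma^{(j)}$ lies outside $r_j\inv(K-x)$. That includes times in the gaps of the fragment, where the linear interpolation leaves $X$. The bound $\int\leb^1(\ldots)\,dQ_j\lesssim\mu_j(B(R)\setminus r_j\inv(K-x))$ controls only time spent in $X\setminus K$, and even that needs lower bounds on the densities of the $\mu_\gamma$. The measure $\mu$ does not see gaps at all, so nothing in your argument bounds them, and your estimate fails without such a bound.

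In the paper this is supplied by the regularization built into the construction, (Reg5) in \cite[Section 7]{CKS}: $\gap(\gamma)\le 2D_0\varepsilon_m$ for $Q_j$-a.e. $\gamma$. The velocity bound is then needed only on $\gamma\inv(X_j)$ for a representation of $\mu|_K$. Similarly, $\mu(N)=\int\mu_\gamma(N)\,dQ$ yields only $\mu_\gamma(N)=0$, not $\int_\gamma 1_N\,ds=0$, unless $\mu_\gamma$ charges all of $\gamma$. This too has to come from the regularized fragments, not from the Alberti representation alone.
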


We will adapt a statement in \cite{CKS} by using a well-chosen container for the pointed measured Gromov-Hausdorff convergence. We expect that there are also other proofs of this proposition. The key observation is that for any Alberti representation $\gamma'_t\in T_\mu(\gamma_t)$ for a.e. $t\in \dom(\gamma)$. Thus, the component of $\gamma(t+h)-\gamma(t)$ that is tranverse to $T_\mu$ has vanishing norm infinitesimally. Blowing up such a representation, as was done in \cite{CKS}, one obtains a representation where the curve fragments become geodesic lines, and the transverse components of their velocity vectors vanish. Instead of redoing the full argument along these lines, it is however easier to use the statements in \cite{CKS} with small edits.

\begin{proof}
The argument in the proof of Theorem \ref{thm:tang=prod}(i) yields the following. In the notation of that proof, for any $R>0$ and  $y\in K:=K_m^j\cap B(x,rR)$  we have that
\begin{align*}
\frac{y-x}{r}+T_\mu(x)\cap B(R)\subset B(r\inv(K\cap B(x,rR)-x),2\varepsilon)
\end{align*}
Let $p_x:B\to T_\mu(x)$ be a projection and set $Z_\rho=(\id-p_x)(K\cap B(x,\rho)-x)$. It follows that 
\begin{align*}
(r\inv Z_{rR}+T_\mu(x))\cap B(R)\subset B(r\inv(K\cap B(x,rR)-x),2\varepsilon).
\end{align*}
Thus, $r\inv(K\cap B(x,rR)-x)$ is $2\varepsilon$-close to a product (sum) $r\inv  Z_{rR}+T_\mu(x)$. Note that $r\inv(K\cap B(x,rR)-x)\subset r\inv Z_{rR}+T_\mu(x)$ as well.

Now let $r_j\downarrow 0$ be a sequence of scales realizing the convergence of some $(Z+T_\mu(x),\nu,0)\in \Tan(X,\mu,x)$.  Let $R_j\uparrow\infty$  and $\varepsilon_j\downarrow 0$ be such that there are $(R_j,\varepsilon_j)$-isometries 
\begin{align*}
\psi_j:r_j\inv(K-x)\to Z+T_\mu(x)
\end{align*}
realizing the convergence $r_j\inv(K-x)\stackrel{pmGH}{\longrightarrow}Z+T_\mu(x)$. Then $Z_j:=r_j\inv Z_{r_jR_j}\stackrel{pGH}{\longrightarrow}Z$ and $Z_j+T_\mu(x)\stackrel{pGH}{\longrightarrow}Z+T_\mu(x)$.  
Indeed, the latter convergence follows from the fact that $r_j\inv(K-x)\stackrel{pGH}{\longrightarrow}Z+T_\mu(x)$ and $Z_j+T_\mu(x)$ is Hausdorff-close to $r_j\inv(K\cap B(x,r_jR_j)-x)$ (in the ambient space $B$). Since $Z_j$ is a subset of $Z_j+T_\mu(x)$ (which pGH converges), it pGH-subconverges to a limit $Z'$, and since then $Z_j+T_\mu(x)\stackrel{GH}{\longrightarrow}Z'+T_\mu(x)$, it must be that $Z=Z'$. Repeating this argument for an arbitrary subsequence we conclude that $Z_j\stackrel{pGH}{\longrightarrow}Z$. 

Recall that $r_j\inv(K\cap B(x,r_jR_j)-x)\subset Z_j+T_\mu(x)$. Considering  (the restriction of) $\mu_j$ (to $K\cap B(x,r_jR_j)$) as a measure on $Z_j+T_\mu(x)$, we have
\begin{align*}
(Z_j+T_\mu(x),\mu_j,0)\stackrel{pmGH}{\longrightarrow} (Z+T_\mu(x),\nu,0).
\end{align*}

We claim that the blow-up of any Alberti representation of $\mu|_K$ is concentrated on lines with constant $Z$-component. This follows from the proof of \cite[Theorem 3.11]{sch16a} and/or \cite[Theorems 7.18 and 7.22]{CKS}.\footnote{As remarked in \cite{sch16a}, the conclusions of Theorems 7.18 and 7.22 in \cite{CKS} hold without the assumption that the space is LDS. Indeed, it suffices to assume that $\mu$ is pointwise doubling and admits an Alberti representation.} We explain the additional steps in the proofs therein to establish constancy of the $Z$-component. 

Denote $Q=\id-p_x$, where $p_x:B\to T_\mu(x)$ is a given projection map (note that this extends canonically to a projection $p_x^\omega:B^\omega\to T_\mu(x)$ on the ultrapower $B^\omega$, see the discussion in the beginning of Section \ref{sec:RNP}), with operator norm $\|Q\|$. Let $W$ and $\widehat W$ be Banach space containers,\footnote{Terminology from Schioppa \cite[Definition 3.1]{sch16a}.} and $\phi_j:Z_j\to W$, $\widehat\phi_j:Z_j+T_\mu(x)\to \widehat W$ the isometric embeddings, realizing the convergence $Z_j\stackrel{pGH}{\longrightarrow}Z$ and $(Z_j+T_\mu(x),\mu_j,0)\stackrel{pmGH}{\longrightarrow}(Z+T_\mu(x),\nu,0)$, respectively ($j\in\N\cup\{\infty\}$). Here, we set $Z=Z_\infty$ for notational convenience.

Consider $\widetilde W=W\times \widehat W$ equipped with metric $\tilde d=\max\{\|Q\|\inv d_W,d_{\widehat W}\}$ (if $W,\widehat W$ are normed spaces, $\tilde d$ is induced by a norm), and define an isometric embedding
\begin{align*}
\iota_j:Z_j+T_\mu(x)\to \widetilde W,\quad (z,v)\mapsto (\phi_j(z),\widehat\phi_j(z,v))
\end{align*}
for $j\in\N\cup\{\infty\}$. Indeed, the isometry of $\iota_j$ follows from the estimate
\begin{align*}
d_W(\phi_j(z),\phi_j(z'))=\|z-z'\|\le \|Q\|\|z+v-(z'-v')\|,\quad z,z'\in Z_j,\; v,v'\in T_\mu(x),
\end{align*}
and the definition of the metric $\tilde d$. It is straightforward to check that $\widetilde W$ is also a container for the convergence $(Z_j+T_\mu(x),\mu_j,0)\stackrel{pmGH}{\longrightarrow}(Z+T_\mu(x),\nu,0)$. Consider the lower semicontinuous functional 
\begin{align*}
    \ell_W^*:\Fr(\widetilde W)\to \R,\quad \ell_W^*(\gamma)=\ell^*(p_W\circ\gamma)
\end{align*}
Here $\ell^*(\gamma)=\Ha^1(\im(\gamma))+\gap(\gamma)$. 

Now, following the proof of \cite[Theorem 3.11]{sch16a} or \cite[Theorems 7.18 and 7.22]{CKS} we obtain measures $Q_j$\footnote{Denoted $P_j$ in \cite{CKS}}. By the weak convergence $Q_j\stackrel{w^*}{\rightharpoonup} Q_{R_0}$ and the lower semicontinuity of $\ell_W^*$ we have
\begin{align*}
   \|Q\|\inv \int_{U(R)}\ell^*(p_Z\circ\gamma)\ud Q_{R_0}=\int_{U(R)}\ell^*_W\ud Q_{R_0}\le \liminf_{j\to \infty}\int_{U(R)}\ell^*_W\ud Q_j
\end{align*}
for any $R$, where $U(R)=\{\gamma\in Fr(\widetilde W):\ \im(\gamma)\subset B(w,R)\}$ is an open set in $\Fr(\widetilde W)$. 

We recall that $Q_j$ is concentrated on the set of fillings of curves in $X_j$, and thus
\begin{align}\label{eq:small1}
    |(\gamma_{Z_j})'_t|=|(Q\circ\gamma)'_t|\le \|Q\|\varepsilon^2\quad\mbox{a.e. }\; t\in \gamma\inv(X_j).
\end{align}
Here, we used the fact that $\gamma_t'\in V_{\gamma_t}\subset \bar C(T_\mu(x),\varepsilon^2)$ for a.e. $t\in \dom(\gamma)$. Moreover, for $Q_j$-a.e. $\gamma$ we have by {\bf (Reg5)} in \cite[Section 7]{CKS} that 
\begin{align}\label{eq:small2}
    \gap(\gamma)\le 2D_0\varepsilon_m.
\end{align}
Together \eqref{eq:small1} and \eqref{eq:small2} imply 
\begin{align*}
    \ell_W^*(\gamma)\le \|Q\|\varepsilon^2\Ha^1(\im(\gamma))+D_0\varepsilon_m\le \varepsilon^2\cdot 2R_0\|Q\|+2D_0\varepsilon_m\quad Q_j-\mbox{a.e. }\; \gamma
\end{align*}
In particular 
\begin{align*}
    \int_{U(R)}\ell^*(p_Z\circ\gamma)\ud Q_{R_0}(\gamma)\le \liminf_{j\to\infty}\int \ell_W^*\ud Q_j\lesssim (\varepsilon^2+\varepsilon_m)Q_j(U(R))
\end{align*}
for all $R>0$. Note that the total mass of $Q_j$ is uniformly bounded, cf. \cite[(7.78)]{CKS}. Since $\varepsilon>0$ is arbitrary and $\sum_m\varepsilon_m<\infty$, letting $\varepsilon\to 0$ and $m\to \infty$ we obtain 
\begin{align*}
\ell^*(p_Z\circ\gamma)=0\quad Q_{R_0}\mbox{-a.e. }\gamma
\end{align*}
The measure $Q_\infty$ obtained in \cite[Theorem 3.11 and Corollary 3.14]{sch16a} and \cite[Theorem 7.18]{CKS} is a limit of the measures $Q_{R_0}$ as $R_0\to\infty$ -- cf. \cite[Lemma 7.85]{CKS} and thus 
\begin{align*}
\ell^*(p_Z\circ\gamma)=0\quad Q_\infty-\mbox{a.e. }\; \gamma.
\end{align*}
This implies that $p_Z\circ \gamma$ is constant $Q_\infty$-a.e., as claimed.
\end{proof}

\bibliographystyle{plain}
\bibliography{abib}
\end{document}